\DeclareMathOperator{\coker}{coker}
\newtheorem{theorem}{Theorem}[section]
\newtheorem{corollary}[theorem]{Corollary}
\newtheorem{proposition}[theorem]{Proposition}
\newtheorem{lemma}[theorem]{Lemma}
\theoremstyle{definition}
\newtheorem{remark}[theorem]{Remark}
\newtheorem{definition}[theorem]{Definition}
\newcommand{\Z}{\mathbb{Z}}
\newcommand{\Tor}{\operatorname{Tor}}
\newcommand{\Spin}{\operatorname{Spin}}
\DeclareMathOperator{\Ima}{Im}
\DeclareMathOperator{\Cl}{Cl}
\begin{document}

\title{On the Homological Stability of Orthogonal and Spin Groups}

\begin{abstract}
We improve homological stability ranges for the orthogonal group, special orthogonal group, elementary orthogonal group and the spin group over a commutative local ring $R$ with infinite residue field such that $2 \in R^{*}$.
\end{abstract}

\author{Marco Schlichting and Sunny Sood}

\address{Mathematics Institute \\
University of Warwick \\
CV4 7AL \\
United Kingdom}

\email{M.Schlichting@warwick.ac.uk}

\address{Mathematics Institute \\
University of Warwick \\
CV4 7AL \\
United Kingdom}

\email{Sunny.Sood.1@warwick.ac.uk}

\keywords{Homological Stability; Orthogonal Groups; Spin Groups}
\subjclass[2020]{15B10, 18G99}

\maketitle

\section{Introduction}
In this paper, motivated by Hermitian K-Theory, we improve homological stability results for the orthogonal group, special orthogonal group, elementary orthogonal group and the spin group with respect to the hyperbolic form. 
In the orthogonal case, this improves the range for homological stability given by Mirzaii \cite{mirzaii2004homology} by 1 and generalises the result obtained by Sprehn and Wahl \cite{sprehn2020homological} to the case of local rings. In the special orthogonal case, this generalises the result obtained by Essert \cite{essert2013homological} for infinite fields to the case of local rings, and is the first homological stability result for the special orthogonal group over a local ring. For the elementary orthogonal group, this improves the range for homological stability given by Randal-Williams and Wahl \cite{randal2017homological} by a factor of 3. For the spin group, this coincides with $H_{1}$-stability and $H_{2}$-stability results stated in Hahn-O'Meara \cite{hahn1989classical}, and is the first homological stability result that accounts for all homology groups. 
\vspace{1ex} 

Recall, for a ring $R$, the \textit{(split) orthogonal group} $O_{n,n}(R) \subseteq GL_{2n}(R)$, is the subgroup 
\[
O_{n,n}(R) := \{ A \in GL_{2n}(R) |  \,\, ^{t}A\psi_{2n}A = \psi_{2n} \}
\]
of $R$-linear automorphisms preserving the form 
\[
\psi_{2n} = 
\begin{pmatrix}
\psi_{2} \\
 &\psi_{2} \\
 && \ddots \\
 &&&\psi_{2}
\end{pmatrix} = \bigoplus_{1}^{n}\psi_{2}, \quad \psi_{2} = 
\begin{pmatrix}
0 & 1 \\
1 & 0
\end{pmatrix},
\]
where $^{t}A$ denotes the transpose matrix of $A$. Define $SO_{n,n}(R)$ to be the subgroup of $O_{n,n}(R)$ consisting of all matrices with determinant 1. We will always consider $O_{n,n}(R)$ as a subgroup of $O_{n+1, n+1}(R)$ via the embedding 
\[
O_{n,n}(R) \subseteq O_{n+1, n+1}(R) : A \mapsto 
\begin{pmatrix}
1 & 0 & 0 \\
0 & 1 & 0 \\
0 & 0 & A
\end{pmatrix}.
\]

Consider $ O_{\infty,\infty}(R) := \varinjlim O_{n,n}(R)$. For a commutative ring $R$ with $2 \in R^{*}$, the higher Hermitian K-Theory groups $GW_{i}(R)$  may be modeled as the homotopy groups of the plus construction applied to the classifying space $BO_{\infty,\infty }(R)$:
\[
GW_{i}(R) \cong \pi_{i}(BO_{\infty,\infty}(R)^{+}) \quad \text{for all $i >0$}.
\] 
Here, the plus construction is taken with respect to the maximal perfect subgroup of $O_{\infty,\infty}(R)$, which in this case is equal to the commutator subgroup $[O_{\infty,\infty}(R), O_{\infty,\infty}(R)]$. See for example \cite{schlichting2017hermitian}. Therefore, we have a Hurewicz map from $GW_{i}(R)$ into the homology group $H_{i}( O_{\infty,\infty}(R),\mathbb{Z})$, which is why we are interested in studying this homological stability problem.  

The homology of the orthogonal group $O_{n,n}$ has long been known to stabilise, in quite large generality; see, e.g., \cite{vogtmann1981spherical}, \cite{betley1990homological}, \cite{charney1987generalization}. 
Recently, Sprehn and Wahl in \cite{sprehn2020homological} have shown that for every field $\mathbb{F}$ other than the field $\mathbb{F}_{2}$, $H_{k}(O_{n,n}(\mathbb{F}) , \mathbb{Z}) \rightarrow H_{k}(O_{n+1,n+1}(\mathbb{F}), \mathbb{Z})$ is an isomorphism for $k \leq n-1$ and surjective for $k \leq n$. In the context of fields, this is currently the best known range of stability. However, they were unable to extend their results to local rings, essentially because the framework that they use is only applicable to \textit{vector spaces}, rather than modules over local rings. 
In the context of local rings, the first precise range of stability was given by Mirzaii in \cite{mirzaii2004homology}. Specfically, he proved that for $R$ commutative local ring with infinite residue field, $H_{k}(O_{n,n}(R) , \mathbb{Z}) \rightarrow H_{k}(O_{n+1,n+1}(R), \mathbb{Z})$ is an isomorphism for $k \leq n-2$ and surjective for $k \leq n-1$. 

Our first main result is an improvement on the known stability range for $O_{n,n}$ over local rings with infinite residue field, with the additional assumption that we require 2 to be invertible. Specifically, we prove that:
\begin{theorem}
Let $R$ be a commutative local ring with infinite residue field such that $2 \in R^{*}$. Then, the natural homomorphism 
\[
H_{k}(O_{n,n}(R),\mathbb{Z}) \longrightarrow H_{k}(O_{n+1,n+1}(R), \mathbb{Z})
\]
is an isomorphism for $k \leq n-1$ and surjective for $k \leq n$. 
\end{theorem}
The proof is modelled on the homological stability proofs given in \cite{nesterenko1990homology} and \cite{schlichting2017euler}. Specifically, we consider a highly acyclic chain complex on which $O_{n,n}$ acts, and analyse the resulting hyperhomology spectral sequences. This is a standard method of proving such results, but the main innovation that gives us the improvement in stability is the use of the technique of \textit{localising homology groups}. 
This technique was first introduced in \cite{schlichting2017euler}. It is this technique that makes it possible to analyse the hyperhomology spectral sequences. 

In addition, the methods we use to prove homological stability for $O_{n,n}(R)$ may be used to prove homological stability for $SO_{n,n}(R)$, which gives our second main result:

\begin{theorem}
Let $R$ be a commutative local ring with infinite residue field such that $2 \in R^{*}$. Then, the natural homomorphism 
\[
H_{k}(SO_{n,n}(R),\mathbb{Z}) \longrightarrow H_{k}(SO_{n+1,n+1}(R),\mathbb{Z})
\]
is an isomorphism for $k \leq n-1$ and surjective for $k \leq n$. 
\end{theorem}
This is the first homological stability result for the special orthogonal group over a local ring, and generalises the analogous result for infinite fields obtained by Essert \cite{essert2013homological}.

Next, the elementary orthogonal group $EO_{n,n}(R)$ may be defined in terms of generators and should be viewed as the orthogonal analogue of the elementary linear group $E_{n}(R)$. 

For $r \in R$ and $1 \leq k \neq l \leq n $, define $\gamma_{kl}(r)$ to be the $n \times n$ matrix with $r$ in the $(k,l)$ position, $-r$ in the $(l,k)$ position, and 0 elsewhere. Define $\gamma_{kk}(r)$ to be the zero matrix. In addition, for $1 \leq i \neq j \leq n$, define $e_{ij}(r)$ to be the $n \times n$ elementary linear matrix with $1$ along the diagonal and $r$ in the $(i,j)$ position.  
We then define the family of \textit{elementary orthogonal matrices} as
\begin{align}
E_{2k, 2l}(r) &:= 
\begin{pmatrix}
I_{n} & \\
\gamma_{kl}(r) & I_{n}
\end{pmatrix},    \label{matrix:$E_{2k,2l}$} \\               
E_{2k-1, 2l-1}(r) &:= 
\begin{pmatrix}
I_{n} & \gamma_{kl}(r)   \label{matrix:$E_{2k-1,2l-1}$}\\
& I_{n}
\end{pmatrix},
\end{align}
and for $k \neq l$, 
\begin{align}
E_{2k-1, 2l}(r) &:= 
\begin{pmatrix}
e_{kl}(r) & \\
 & e_{lk}(-r)
\end{pmatrix},   \label{matrix:$E_{2k-1,2l}$} \\
E_{2k, 2l-1}(r) &:= 
\begin{pmatrix}
e_{lk}(-r) &   \\
& e_{kl}(r)
\end{pmatrix}.   \label{matrix:$E_{2k,2l-1}$}
\end{align}
We define the \textit{elementary orthogonal group} $EO_{n,n}(R)$ as the subgroup of $O_{n,n}(R)$ generated by the elementary orthogonal matrices. We refer the reader to \cite[Sections 5.3A and 5.3B]{hahn1989classical} for more information about $EO_{n,n}(R)$, including a list of relations amongst these generators. 
\begin{remark}
For the sake of notation, we have in the above definitions used the convention that the hyperbolic form on $R^{2n}$ is taken with respect to matrix$
\begin{pmatrix}
0 & I_{n} \\
I_{n} & 0
\end{pmatrix}$. 
This convention therefore differs from the standard convention used in this paper up to conjugation by a suitable permutation matrix, and we will always tacitly assume this whenever working with $EO_{n,n}(R)$.
\end{remark}
Our third main result:
\begin{theorem}
Let $R$ be a commutative local ring with infinite residue field such that $2 \in R^{*}$. Then, the natural homomorphism
\[
H_{k}(EO_{n,n}(R), \mathbb{Z}) \longrightarrow H_{k}(EO_{n+1,n+1}(R), \mathbb{Z})
\]
is an isomorphism for $k \leq n-1$ and surjective for $k \leq n$. 
\end{theorem}
This improves the range for homological stability given by Randal-Williams and Wahl in \cite{randal2017homological} by a factor of 3. 

Finally, let $R$ be a commutative ring, which for the purposes of this article is such that $2 \in R^{*}$. We define $\Spin_{n,n}(R)$ to be the Spin group of the quadratic module $(R^{2n}, \langle \cdot , \cdot \rangle)$, where $\langle \cdot, \cdot \rangle$ is the symmetric bilinear form associated to the matrix $\psi_{2n}$ as above. We refer the reader to the appendix for more information about Spin groups. The reader may also want to look at \cite{hahn1989classical}, \cite{scharlau2012quadratic} and \cite{lawson2016spin} as alternative references.

In the case $R$ is a commutative local ring with infinite residue field such that $2 \in R^{*}$, homological stability for $\Spin_{n,n}(R)$ will follow immediately from homological stability of $EO_{n,n}(R)$ via the relative Hochschild-Serre Spectral Sequence applied to short exact sequence 
\[
1 \longrightarrow \mathbb{Z}_{2} \longrightarrow \Spin_{n,n}(R) \longrightarrow EO_{n,n}(R) \longrightarrow 1,
\]
see Theorem \ref{theorem:shortexactsequences} in the appendix. Indeed, for the purposes of this paper, it is perhaps best to think of $EO_{n,n}(R)$ as being \textit{defined} in terms of this short exact sequence. This is the perspective that we will adopt.  

This gives us our fourth main result:
\begin{theorem}
Let $R$ be a commutative local ring with infinite residue field such that $2 \in R^{*}$. Then, the natural homomorphism
\[
H_{k}(\Spin_{n,n}(R),\mathbb{Z}) \longrightarrow H_{k}(\Spin_{n+1,n+1}(R),\mathbb{Z})
\]
is an isomorphism for $k \leq n-1$ and surjective for $k \leq n$. 
\end{theorem}
This coincides with known $H_{1}$ and $H_{2}$-stability results for $\Spin_{n,n}$ given in \cite{hahn1989classical}, and is the first such homological stability result that accounts for all homology groups.

\subsection{Acknowledgements}
The authors thank the anonymous referee for providing very detailed feedback on this paper. The second author also gratefully acknowledges funding from the University of Warwick and the UK Engineering and Physical Sciences Research Council (Grant number: EP/V520226/1) during his time as a PhD student, when this paper was first written. 

\section{The complex of totally isotropic unimodular sequences} \label{section:complex}
In this section, $R$ will be a commutative local ring with infinite residue field.
\subsection{The chain complex}
To define the chain complex we want to consider, we need to make some preliminary definitions. 

\begin{definition}
A {\em space} over a ring $R$ is a projective $R$-module of finite rank.  A submodule $M \subset V$ of a space $V$ is called a subspace if it is a direct factor. 
\end{definition}

\begin{definition}
Let $q\geq 0$ be an integer, and $W$ a free $R$-module of rank $n$.
A sequence of $q$ vectors $(v_{1}, \dots, v_{q})$ in $W$ will be called {\em unimodular} if every subsequence of length $r \leq \min\{n,q\}$ generates a subspace of rank $r$.
We denote by $\mathcal{U}_q(W)$ the set of unimodular sequences of length $q$ in $W$.
\end{definition}

\begin{remark}
For $R$ a local ring, the sequence of vectors $(v_{1}, \dots, v_{q}) $ in $R^{2n}$ is unimodular if and only if $(\bar{v}_{1}, \dots, \bar{v}_{q})$ in $k^{2n}$ is unimodular, where $k$ denotes the residue field of $R$ and $\bar{v}_{i}$ the class of $v_{i}$ in $k^{2n}.$ 
\end{remark}

\begin{definition}
A sequence of vectors $(v_{1}, \dots, v_{q})$ in $R^{2n}$ will be called {\em totally isotropic} if for every $i,j=1,\dots,q$ we have $\langle v_i,v_j\rangle = 0$.
\end{definition}

We now introduce the chain complex that we want to consider.
Specifically, consider chain complex 
\begin{align}
C_{*}(n) := (C_{*}(R^{2n}), d) = \cdots \rightarrow C_{2}(R^{2n}) \rightarrow C_{1}(R^{2n}) \xrightarrow{\varepsilon} \mathbb{Z} \rightarrow 0  \label{expression:chaincomplex}
\end{align}
where for $k \geq 1$, $C_{k}(R^{2n})$ is defined as the free abelian group $C_{k}(R^{2n}) := \mathbb{Z}[\mathcal{IU}_{k}(R^{2n})]$ generated by the set of unimodular totally isotropic sequences of length $k$ in $R^{2n}$:
\[
\mathcal{IU}_{k}(R^{2n}) := \{ (v_{1}, \dots, v_{k}) : v_{i} \in R^{2n}, (v_{1}, \dots, v_{k}) \,\, \text{totally isotropic and unimodular}\}.
\]
We set $C_{0}(n) := \mathbb{Z}$.

The differential $d$ is defined on basis elements by 
\[
d(v_{1}, \dots, v_{k}) := \sum_{i=1}^{k}(-1)^{i+1}d_{i}(v_{1},\dots, v_{k} ),
\]
\[
d_{i}(v_{1},\dots, v_{k} ) := (v_{1}, \dots , \hat{v}_{i}, \dots, v_{k}),
\]
and the map $\varepsilon: C_{1}(R^{2n}) \rightarrow \mathbb{Z}$ is the augmentation map sending a generator $(v)$ to $1$.
\begin{remark}
The simplicial set that gives rise to chain complex (\ref{expression:chaincomplex}) has already been studied in \cite{panin1990homological} and \cite{mirzaii2004homology}. 
As we do not need to consider simplicial sets in this article, we stick to chain complex notation. 
\end{remark}

Note that for $A \in O_{n,n}(R)$, $A$ acts from the left on the chain complex $(C_{*}(R^{2n}), d)$ by acting on basis elements: 
\[
A \cdot(v_{1}, \dots, v_{k}) := (Av_{1}, \dots, Av_{k}).
\] 
For a resolution $P_*$ of the trivial $O_{n,n}(R)$-module $\mathbb{Z}$ by projective right $O_{n,n}(R)$-modules, the bicomplex $P_*\otimes_{O_{n,n}}C_*(n)$ gives rise to two \textit{hyperhomology spectral sequences} 
\begin{align}
E^{2}_{p,q}(n) &= H_{p}(O_{n,n}, H_{q}(C_{*}(n))) \Rightarrow H_{p+q}(O_{n,n}, C_{*}(n)) \label{ss:firsthyperhomology} \\
E^{1}_{p,q}(n) &= H_{q}(O_{n,n}, C_{p}(n)) \Rightarrow H_{p+q}(O_{n,n}, C_{*}(n)).  
\label{ss:secondhyperhomology} 
\end{align}
For a reference on hyperhomology spectral sequences, we refer the reader to \cite[Chapter VII, Section 5]{brown1982cohomology}.
Replacing $O_{n,n}$ with $SO_{n,n}$ and $EO_{n,n}$, we similarly obtain hyperhomology spectral sequences 
\begin{align*}
E^{2}_{p,q}(n) &= H_{p}(SO_{n,n}, H_{q}(C_{*}(n))) \Rightarrow H_{p+q}(SO_{n,n}, C_{*}(n))  \\
E^{1}_{p,q}(n) &= H_{q}(SO_{n,n}, C_{p}(n)) \Rightarrow H_{p+q}(SO_{n,n}, C_{*}(n))
\end{align*}
and 
\begin{align*}
E^{2}_{p,q}(n) &= H_{p}(EO_{n,n}, H_{q}(C_{*}(n))) \Rightarrow H_{p+q}(EO_{n,n}, C_{*}(n))  \\
E^{1}_{p,q}(n) &= H_{q}(EO_{n,n}, C_{p}(n)) \Rightarrow H_{p+q}(EO_{n,n}, C_{*}(n)).\label{ss:elementarysecondhyperhomology}
\end{align*}
These spectral sequences will eventually give us our desired homological stability results. 

\subsection{Proving acyclicity}

We would like to prove that the complex $(C_{*}(n), d)$ is $(n-1)$-acyclic. This has already been proven by Mirzaii \cite{mirzaii2004homology}, but our proof has the advantage that it does not refer to the simplicial techniques used in \cite{kallen1980homology}.  
However, we do make use of a concept \textit{general position}. This was first defined in \cite{panin1990homological}, and used in both \cite{panin1990homological} and \cite{mirzaii2004homology} to prove their respective acyclicity  results. 
We give the definition as stated in \cite{mirzaii2004homology}.

\begin{definition}
\label{dfn:GenPosition}
Let $S = \{v_{1}, \dots, v_{k}\}$ and $T = \{w_{1}, \dots, w_{k'}\}$ be basis of two totally isotropic free summands of $R^{2n}$. We say that $T$ is in general position with $S$, if $k \leq k'$ and the $k' \times k$- matrix $(\langle w_{i}, v_{j}\rangle)$ has a left inverse. 
\end{definition}

We may also say that a totally isotropic subspace $W$ is in general position with respect to a totally isotropic subspace $V$ if there is a basis $T$ of $W$ which is in general position with respect to a basis $S$ of $V$ as in Definition \ref{dfn:GenPosition}. 
The following result, whose proof we refer to \cite[Chapter 2, Proposition 4.2]{mirzaii2004homology}, will be used to prove acyclicity. 

\begin{proposition} \label{prop:gp}
Let $n \geq 2$ be an integer and assume $T_{i}$, $i=1,\dots,\ell$ are finitely many subsets of $R^{2n}$ such that each $T_{i}$ is a basis of a free totally isotropic summand of $R^{2n}$ with $k$ elements, where $k \leq n-1$. 
Then, there is a basis, $T = \{w_{1}, \dots, w_{n}\}$, of a free totally isotropic summand of $R^{2n}$ such that $T$ is in general position with all $T_{i}$, $i=1,\dots,\ell$. 
Moreover, dim$(W \cap V_{i}^{\perp}) = n-k$, where $W =$Span$(T)$ and $V_{i} =$Span$(T_{i})$, $i=1,\dots,\ell$.
\end{proposition}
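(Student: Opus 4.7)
The plan is to reduce modulo the maximal ideal and then argue by genericity over the infinite residue field $\kappa = R/\mathfrak{m}$. Every relevant condition in the statement --- a sequence being unimodular, a matrix being left invertible, and a submodule being a direct summand --- is equivalent to its reduction mod $\mathfrak{m}$ being the analogous condition over $\kappa$, thanks to Nakayama's lemma and the remark recorded earlier in the paper. So if $\bar T = \{\bar w_1, \dots, \bar w_n\}$ is a basis of a totally isotropic $\kappa$-subspace of $\kappa^{2n}$ in general position with each reduction $\bar T_i$, then any choice of lifts $w_i \in R^{2n}$ of $\bar w_i$ yields a basis $T$ of a free totally isotropic direct summand in general position with every $T_i$. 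Hence it suffices to prove the statement when $R$ is an infinite field $\kappa$.

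Over $\kappa$, I would interpret general position geometrically: for a fixed totally isotropic $V_i = \mathrm{Span}(T_i)$ of dimension $k$, the pairing induces a linear map $W \to V_i^{*}$ whose kernel is exactly $W \cap V_i^{\perp}$, and the condition that some basis $T$ of a maximal totally isotropic $W$ is in general position with $T_i$ is equivalent to this map being surjective. The locus $U_i$ of maximal totally isotropic $W$ with this property is a Zariski open subset of the orthogonal Grassmannian $\mathrm{OG}(n,2n)$, cut out by the non-vanishing of a $k\times k$ minor of the pairing matrix. To see $U_i$ is non-empty, I would extend $T_i$ to a hyperbolic basis $e'_1,f'_1,\dots,e'_n,f'_n$ with $V_i = \mathrm{Span}(e'_1,\dots,e'_k)$, and take $W_i = \mathrm{Span}(f'_1,\dots,f'_k,e'_{k+1},\dots,e'_n)$; the pairing matrix with $V_i$ is then the identity $I_k$, so $W_i \in U_i$. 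By swapping $e'_n \leftrightarrow f'_n$ if necessary, such an example exists in each of the two components of $\mathrm{OG}(n,2n)$.

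Fixing one irreducible component $X$ of $\mathrm{OG}(n,2n)$, each $U_i \cap X$ is a non-empty Zariski open subset of the irreducible variety $X$. Since $\kappa$ is infinite, the finite intersection $\bigcap_i (U_i \cap X)$ contains a $\kappa$-rational point, yielding the desired $W$ in general position with every $V_i$ simultaneously. The dimension identity $\dim(W \cap V_i^{\perp}) = n - k$ then follows at once from the rank-nullity theorem applied to the surjection $W \twoheadrightarrow V_i^{*}$, whose kernel is exactly $W \cap V_i^{\perp}$.

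The main obstacle, I expect, is not the genericity step itself but making the reduction to the field case fully rigorous: one has to confirm carefully that each of unimodularity, left invertibility, and the direct-summand property ascends from $\kappa$ to $R$ under arbitrary lifts of basis vectors. A self-contained alternative, if one wishes to avoid the orthogonal Grassmannian, is an explicit inductive construction of the basis $w_1,\dots,w_n$: at each stage use the infiniteness of $\kappa$ (and hence Zariski density of $\kappa$-points in any non-empty open subvariety of an irreducible affine space) to pick the next vector avoiding the finitely many proper closed conditions imposed by the $V_i$ and by the previously chosen $w_j$. Either route works, but both hinge on the same geometric fact that in each stage a finite union of proper closed subsets cannot cover an affine or projective space over an infinite field.
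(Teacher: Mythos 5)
The paper does not supply its own proof of Proposition~\ref{prop:gp}; it cites Mirzaii, \cite[Chapter 2, Proposition 4.2]{mirzaii2004homology}. So the only thing to assess is whether your argument holds on its own terms, and there is a genuine gap in the reduction to the residue field.

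You claim that if $\bar T = \{\bar w_1,\dots,\bar w_n\}$ is a basis of a totally isotropic subspace of $\kappa^{2n}$ in general position with each $\bar T_i$, then \emph{any} choice of lifts $w_i\in R^{2n}$ of the $\bar w_i$ yields a basis of a free \emph{totally isotropic} direct summand in general position with each $T_i$. The unimodularity, left-invertibility and direct-summand assertions do lift (they amount to a determinant or a minor being a unit, and a unit mod $\mathfrak m$ is a unit in the local ring $R$), and general position lifts for the same reason. But total isotropy does \emph{not} survive an arbitrary lift: $\langle\bar w_i,\bar w_j\rangle=0$ in $\kappa$ only says $\langle w_i,w_j\rangle\in\mathfrak m$, not $\langle w_i,w_j\rangle=0$ in $R$. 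For instance, with $R=\mathbb Z_{(5)}$, $n=1$, $\bar e_1=(1,0)\in\mathbb F_5^2$, the lift $w_1=(1,5)$ satisfies $\langle w_1,w_1\rangle = 10\neq 0$. At the end of your proposal you flag that the ascent from $\kappa$ to $R$ is the delicate step, but the conditions you list there (unimodularity, left invertibility, direct summand) are precisely the ones that \emph{are} fine, while the one that fails --- total isotropy --- is omitted from the list.

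The gap is fixable, but it requires an extra idea rather than arbitrary lifting. Two standard repairs: (i) instead of lifting the vectors $\bar w_i$, lift an element $\bar A\in O_{n,n}(\kappa)$ carrying the standard Lagrangian $\mathrm{Span}(\bar e_1,\dots,\bar e_n)$ to $\mathrm{Span}(\bar w_1,\dots,\bar w_n)$; surjectivity of $O_{n,n}(R)\to O_{n,n}(\kappa)$ (which holds for local rings with $2\in R^*$, e.g.\ via generation by Eichler transformations) then produces $A\in O_{n,n}(R)$ and one takes $W=A\cdot\mathrm{Span}(e_1,\dots,e_n)$; or (ii) skip the reduction entirely and run your alternative ``one vector at a time'' argument directly over $R$: at each stage the vector is required to lie in the $R$-submodule $W_{j-1}^\perp\cap(\text{span of already chosen }w_i)^\perp$ (a free $R$-module), to be isotropic, and to avoid finitely many conditions of the form ``a certain coordinate lies in $\mathfrak m$''; the infinitude of $\kappa$ lets you choose a unimodular isotropic vector avoiding these. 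Either way the genericity idea is the right one, but the claim that the whole problem reduces to the field case by unrestricted lifting is false as stated.
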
 
\qed

In addition, the following lemma will be both useful and reassuring. 

\begin{lemma} \label{lemma:gpzerointersection}
Let $W$ and $V$ be totally isotropic subspaces of $R^{2n}$.
Assume that $W$ is in general position with respect to $V$. Then $W \cap V = \{0\}$.
\end{lemma}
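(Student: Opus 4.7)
The plan is to unwind what ``$W$ in general position with respect to $V$'' means in concrete terms and then exploit the fact that $W$ itself is totally isotropic. By definition, there is a basis $T=\{w_{1},\dots,w_{k'}\}$ of $W$ and a basis $S=\{v_{1},\dots,v_{k}\}$ of $V$ with $k\leq k'$ such that the $k'\times k$ matrix
\[
M=\bigl(\langle w_{i},v_{j}\rangle\bigr)_{1\leq i\leq k',\,1\leq j\leq k}
\]
admits a left inverse $L$ with $LM=I_{k}$.

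Now I would take any $x\in W\cap V$ and express it in the basis $S$: write $x=\sum_{j=1}^{k}b_{j}v_{j}$ for a column vector $b=(b_{1},\dots,b_{k})^{t}\in R^{k}$. The point is that $x$ also lies in $W$, and \emph{$W$ is totally isotropic}, so $\langle w_{i},x\rangle=0$ for every $i=1,\dots,k'$. Expanding,
\[
0=\langle w_{i},x\rangle=\sum_{j=1}^{k}b_{j}\langle w_{i},v_{j}\rangle=(Mb)_{i},
\]
so $Mb=0$ in $R^{k'}$. Multiplying on the left by $L$ gives $b=LMb=0$, hence $x=\sum_{j}b_{j}v_{j}=0$.

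This is essentially all there is to it. There is no serious obstacle; the only thing to be careful about is to use the isotropy of $W$ (rather than of $V$) when pairing $x$ against the $w_{i}$, since it is the left inverse of $M$ — corresponding to injectivity in the direction $R^{k}\to R^{k'}$ — that is available, and this is precisely what allows us to conclude $b=0$ from $Mb=0$. Note also that the argument does not require anything about $V$ beyond $x\in V$, so the hypothesis that $V$ is totally isotropic is not needed for this lemma.
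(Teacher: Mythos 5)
Your proof is correct and is essentially the same argument as the paper's. The paper rephrases left-invertibility of $M$ as surjectivity of the map $\pi\colon W \to R^{k}$, $w \mapsto (\langle w, v_{1}\rangle,\dots,\langle w, v_{k}\rangle)$, picks preimages $v_{i}^{\#} \in W$ of the standard basis vectors, and then for $y = \sum_{j} a_{j} v_{j} \in W\cap V$ observes $a_{i} = \langle v_{i}^{\#}, y\rangle = 0$ by isotropy of $W$; this is exactly your ``$Mb=0$, multiply by $L$'' step expressed in terms of the linear map rather than the matrix. Your closing remark that isotropy of $V$ is not actually used is also accurate.
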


\begin{remark}
Lemma \ref{lemma:gpzerointersection} implies that if $W$ is in general position with respect to a unimodular sequence $(u_{1}, \dots, u_{k})$ for $k < n$, then $(u_{1}, \dots, u_{k}, w)$ is unimodular for every unimodular vector $w \in W$. 
\end{remark}

\begin{proof}[Proof of Lemma \ref{lemma:gpzerointersection}]
As $W$ is in general position with respect to $V$, the map 
\begin{align*}
\pi: & W \twoheadrightarrow R^{k} \\
& w \mapsto \left( \langle w, v_{1} \rangle, \dots, \langle w, v_{k} \rangle \right)
\end{align*}
is surjective. 
Therefore, for every $1\leq i \leq k$, there exists a $v_{i}^{\#} \in W$ such that $\pi(v_{i}^{\#}) = (0,\dots,0,1,0,\dots,0)$, the 1 being in the $i$th position.
Now, let $y \in W\cap V$. 
Note, as $y \in V$ and $V$ is free with basis $v_{1}, \dots, v_{k}$, we may write $y$ uniquely as $y = \sum_{i} a_{i}v_{i}$ for some $a_{i} \in R$. 
Evaluating $\langle v_{i}^{\#}, \cdot \rangle$ on $y$ and noting that $\langle v_{i}^{\#}, v_{j}\rangle = \delta_{ij}$, we deduce that $a_{i} = \langle v_{i}^{\#}, y \rangle$ for every $i=1,\dots,k$. 
But $v_{i}^{\#}, y \in W$ and $W$ is totally isotropic, so $\langle v_{i}^{\#}, y \rangle = 0$ for $i=1,\dots,k$. Therefore, $y = 0$.  
\end{proof}

For $u=\sum_im_iu_i \in \Z[\mathcal{IU}_{p}(R^{2n})]$ and $v=\sum_jn_jv_j \in \Z[\mathcal{IU}_{q}(R^{2n})]$ such that $(u_i,v_j) \in \mathcal{IU}_{p+q}(R^{2n})$ for all $i,j$, we will write $(u,v)$ for the element
$$(u,v) = \sum_{i,j}m_in_j(u_i,v_j) \in \mathbb{Z}[\mathcal{IU}_{p+q}(R^{2n})].$$
Using Proposition \ref{prop:gp} and Lemma \ref{lemma:gpzerointersection}, we prove the following.

\begin{lemma} \label{lemma:acyclic}
Let $p, q \geq 0$ and $p+q < n$. Let $(u,f) \in \mathbb{Z}[\mathcal{IU}_{p+q}(R^{2n})]$ such that $u \in \mathcal{IU}_{p}$ and $f \in \mathbb{Z}[\mathcal{U}_{q}(W)]$, where $W$ is a free totally isotropic summand of $R^{2n}$ of dimension $n$ in general position with respect to $U= $Span$(u)$.
If $df = 0 \in \mathbb{Z}[\mathcal{U}_{q-1}(W)]$, then there exists an element $g \in \mathbb{Z}[\mathcal{U}_{q+1}(W)]$ such that $dg = f$ and $(u,g) \in \mathbb{Z}[\mathcal{IU}_{p+q+1}(R^{2n})]$.
\end{lemma}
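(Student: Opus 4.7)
The plan is to reduce to the (standard) acyclicity of the complex of unimodular sequences in a free $R$-module of rank $n-p$, by using general position to place $U := \mathrm{Span}(u)$ and a rank-$(n-p)$ summand of $W$ inside complementary hyperbolic pieces of $R^{2n}$.

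First I set $W' := W \cap U^\perp$. By the general position hypothesis, the map
\[
\pi \colon W \longrightarrow R^p, \quad w \mapsto (\langle w, u_i \rangle)_{i=1}^p
\]
is surjective with kernel $W'$, and $W'$ has rank $n - p$ by Proposition \ref{prop:gp}; since $R^p$ is free, the sequence splits and $W'$ is a free direct summand of $W$. Any splitting provides $u_1^\#, \dots, u_p^\# \in W$ with $\langle u_i, u_j^\# \rangle = \delta_{ij}$ and $\langle u_i^\#, u_j^\# \rangle = 0$ (as $W$ is totally isotropic), so $H := \mathrm{Span}(u_1, u_1^\#, \dots, u_p, u_p^\#)$ is a hyperbolic direct summand of $R^{2n}$ of rank $2p$, and one checks $W' \subseteq H^\perp$.

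The condition $(u, f) \in \mathbb{Z}[\mathcal{IU}_{p+q}(R^{2n})]$ forces each vector appearing in $f$ to lie in $W \cap U^\perp = W'$, so $f \in \mathbb{Z}[\mathcal{U}_q(W')]$; because $W' \subseteq W$ is a direct summand, unimodularity of sequences in $W'$ of length at most $n - p$ computed in $W'$ coincides with unimodularity computed in $W$, so $f$ remains a cycle in $\mathbb{Z}[\mathcal{U}_*(W')]$. Since $W'$ is free of rank $n-p$ over the local ring $R$ and $p + q < n$ gives $q \leq n - p - 1$, the standard acyclicity of the unimodular-sequence complex over a local ring (cf.\ \cite{kallen1980homology}) yields $g \in \mathbb{Z}[\mathcal{U}_{q+1}(W')] \subseteq \mathbb{Z}[\mathcal{U}_{q+1}(W)]$ with $dg = f$.

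It remains to verify $(u, g) \in \mathbb{Z}[\mathcal{IU}_{p+q+1}(R^{2n})]$ on each generator $(u_1, \ldots, u_p, w_1, \ldots, w_{q+1})$ in the support of $(u,g)$. Total isotropy is immediate: the $u_i$ are mutually orthogonal by assumption, the $w_j$ are mutually orthogonal as they lie in the isotropic $W$, and $\langle u_i, w_j \rangle = 0$ since $w_j \in W' \subseteq U^\perp$. For unimodularity, any subsequence groups as $(u_{i_1}, \ldots, u_{i_s}, w_{j_1}, \ldots, w_{j_t})$ with the $u$-part in $H$ spanning a rank-$s$ direct summand of $R^{2n}$ (from unimodularity of $u$) and the $w$-part in $H^\perp$ spanning a rank-$t$ direct summand of $R^{2n}$ (unimodularity in $W'$ together with $W'$ being a direct summand of $R^{2n}$); because $H$ and $H^\perp$ are complementary direct summands of $R^{2n}$, the sum is a rank-$(s+t)$ direct summand of $R^{2n} = H \oplus H^\perp$. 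The main obstacle is exactly this last check: combining unimodularity of sequences in $W$ with unimodularity of $u$ into unimodularity in the ambient $R^{2n}$ is not automatic and crucially relies on the hyperbolic splitting $R^{2n} = H \oplus H^\perp$ produced from general position.
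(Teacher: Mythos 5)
Your proof is correct, but it takes a genuinely different route at both stages. For the construction of $g$: the paper builds $g$ explicitly as a ``cone'' $g = \sum_i n_i (v, v_1^i, \dots, v_q^i)$ over $f$, using a single vector $v \in L := W \cap U^\perp$ chosen generically (by the infinite residue field hypothesis, via \cite[Lemmas 5.5, 5.6]{schlichting2017euler}) so that each $(v, v_1^i, \dots, v_q^i)$ is unimodular in $L$; then $dg = f - (v, df) = f$ since $df=0$. You instead invoke the full acyclicity of the unimodular-sequence complex of $W'$ as a black box (citing \cite{kallen1980homology}), which is a heavier input — indeed the paper's introduction to this subsection explicitly announces it wants to \emph{avoid} van der Kallen's simplicial machinery, and the cone construction is essentially the reason one can do so. Logically the two are close (the cone is how one proves acyclicity), but the paper's version is more self-contained and also has the useful feature of automatically producing $g$ supported in $L$, whereas you need to first observe $f \in \mathbb{Z}[\mathcal{U}_q(W')]$ and then work in $W'$. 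For the unimodularity of $(u,g)$: the paper appeals tersely to Lemma~\ref{lemma:gpzerointersection} (i.e., $W\cap U = 0$ over the residue field, so concatenating an independent family from $U$ with one from $W$ stays independent mod the maximal ideal). Your argument via the hyperbolic splitting $R^{2n} = H \oplus H^\perp$, with $U \subset H$ and the support of $g$ in $W' \subset H^\perp$, is a genuinely different and arguably cleaner structural explanation — though it requires you to first produce the $u_i^\#$ and verify $H$ is a nondegenerate summand, which the paper never needs. Both work; yours trades the paper's minimalist cone argument for a bigger citation plus a more transparent treatment of the unimodularity check.
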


\begin{proof}
Since $f \in \mathbb{Z}[\mathcal{U}_{q}(W)]$ and $(u,f) \in \mathbb{Z}[\mathcal{IU}_{p+q}(R^{2n})]$, we have $f \in \mathbb{Z}[\mathcal{U}_{q}(L)]$, where $L = W \cap U^{\perp}$. 
As $W$ is in general position with respect to $U$, $L$ is a finitely generated free $R$-module of rank $n-p$. 
Write $f = \sum_{i}n_{i}(v_{1}^{i}, \dots, v_{q}^{i})$. 
Then, as $R$ is a local ring with infinite residue field and $L$ is a finitely generated free $R$-module of rank $n-p > q$, we deduce that there exists a $v \in L$, such that $(v, v_{1}^{i}, \dots, v_{q}^{i}) \in \mathcal{U}_{q}(L)$ for every $i$. This is standard; see for instance \cite[Lemmas 5.5 and 5.6]{schlichting2017euler}.
Let $g := \sum_{i}n_{i}(v, v_{1}^{i}, \dots, v_{q}^{i})$. Then $dg = f$ by construction. Moreover as $g \in \mathbb{Z}[\mathcal{U}_{q+1}(L)]$, $(u,g)$ defines a totally isotropic sequence of vectors and as $g \in \mathbb{Z}[\mathcal{U}_{q+1}(W)]$, by Lemma \ref{lemma:gpzerointersection}, $(u,g)$ is a unimodular sequence, so that $(u,g) \in \mathbb{Z}[\mathcal{IU}_{p+q+1}(R^{2n})]$.
\end{proof}

\begin{corollary} \label{corollary:acyclic}
Let $k \leq n-1$, and let $z \in C_{k}(n) = \mathbb{Z}[\mathcal{IU}_{k}(R^{2n})]$ be a cycle. 
Then, $z$ is homologous to a cycle $z'\in \mathbb{Z}[\mathcal{U}_{k}(W)]\subset C_{k}(n)$ contained within a free totally isotropic summand $W$ of $R^{2n}$ of dimension $n$. 
\end{corollary}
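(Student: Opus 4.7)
The plan is to apply Lemma \ref{lemma:acyclic} iteratively, pushing the vertices of the simplices in the support of $z$ into $W$ one at a time.

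First, I would write $z = \sum_i n_i \sigma_i$ with each $\sigma_i \in \mathcal{IU}_k(R^{2n})$ and invoke Proposition \ref{prop:gp} (applicable since $k \leq n-1$) to choose a single free totally isotropic summand $W \subset R^{2n}$ of rank $n$ which is in general position with $\mathrm{Span}(\sigma_i)$ for every $i$ in the support of $z$. Since a left inverse of the associated gram matrix restricts to a left inverse of any of its column submatrices, $W$ is then in general position with the span of every subsequence of every $\sigma_i$, which allows Lemma \ref{lemma:acyclic} to be applied later with progressively shorter outer parts. Moreover, by Lemma \ref{lemma:gpzerointersection}, no vertex of any $\sigma_i$ lies in $W$, so the ``outer$+W$-part'' decomposition of a chain used below is unambiguous.

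Next, I would construct inductively a sequence $z = \xi^{(0)}, \xi^{(1)}, \dots, \xi^{(k)}$ of cycles in $C_k(n)$, each homologous to its predecessor, such that $\xi^{(s)}$ admits a decomposition
\[ \xi^{(s)} = \sum_u (u, f_u^{(s)}), \]
where $u$ runs over sub-simplices of length $k-s$ of the $\sigma_i$, $f_u^{(s)} \in \mathbb{Z}[\mathcal{U}_s(W)]$ satisfies $df_u^{(s)} = 0$, and each $(u, f_u^{(s)}) \in \mathbb{Z}[\mathcal{IU}_k(R^{2n})]$. The base case $s = 0$ takes $\xi^{(0)} = z$ with $f^{(0)}_{\sigma_i} = n_i \in \mathbb{Z} = \mathbb{Z}[\mathcal{U}_0(W)]$. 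For the step $s \to s+1$, I would apply Lemma \ref{lemma:acyclic} with $p = k-s$, $q = s$ (so $p+q = k < n$) to each $(u, f_u^{(s)})$ to obtain $g_u \in \mathbb{Z}[\mathcal{U}_{s+1}(W)]$ with $dg_u = f_u^{(s)}$ and $(u, g_u) \in \mathbb{Z}[\mathcal{IU}_{k+1}(R^{2n})]$. Using the Leibniz-type identity $d(u, g_u) = (du, g_u) + (-1)^{k-s}(u, f_u^{(s)})$, one checks that $\xi^{(s)}$ is homologous, up to an overall sign, to $\sum_u (du, g_u)$; regrouping by common outer faces $v$ of length $k-s-1$ then expresses this as $\sum_v (v, f_v^{(s+1)})$.

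The main obstacle is verifying the inductive hypothesis that each new $f_v^{(s+1)}$ is again a cycle. This follows from $d\xi^{(s)} = 0$: using $df_u^{(s)} = 0$ and the Leibniz identity, this condition simplifies to $\sum_u (du, f_u^{(s)}) = 0$ in $C_{k-1}(n)$; regrouping by outer faces $v$ of length $k-s-1$ and using the injectivity of the concatenation map $h \mapsto (v, h)$ from $\mathbb{Z}[\mathcal{U}_s(W)]$ into $C_{k-1}(n)$ (which in turn relies on no vertex of $v$ being in $W$) yields $\sum_{(u,r):\, \hat u_r = v} (-1)^{r+1} f_u^{(s)} = 0$; applying $d$ to $f_v^{(s+1)}$ and using $dg_u = f_u^{(s)}$ then gives $df_v^{(s+1)} = 0$. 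After $k$ iterations the outer parts have length $0$, so $\xi^{(k)} \in \mathbb{Z}[\mathcal{U}_k(W)]$ is the desired cycle $z'$.
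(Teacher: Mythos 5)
Your proof is correct and takes essentially the same approach as the paper: fix once a rank-$n$ totally isotropic summand $W$ in general position with all simplices in the support of $z$ via Proposition \ref{prop:gp}, then iterate Lemma \ref{lemma:acyclic} to replace $z$ by a homologous cycle whose simplices have a progressively longer tail lying in $W$, deducing at each step that the ``inner'' parts are cycles by separating terms according to the $W$-free prefix. You are merely a bit more explicit than the paper in spelling out the invariant $df_u^{(s)}=0$, the passage of general position to subsequences, and the regrouping by outer faces, all of which the paper's recursion uses implicitly.
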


\begin{proof}
Suppose $z = \sum_{i}n_{i}u_{i}$ where $n_{i} \in \mathbb{Z}$ and $u_{i} \in \mathcal{IU}_{k}(R^{2n})$. By Proposition \ref{prop:gp}, there exists a free totally isotropic subspace $W$ of rank $n$ in general position with respect to all $U_{i} =$Span$(u_{i})$. 
Choose unimodular vectors $f_{i} \in W \cap U_{i}^{\perp}$ which is possible since $\dim W \cap U_{i}^{\perp} \geq 1$, by Proposition \ref{prop:gp}. 
Note, by Lemma \ref{lemma:gpzerointersection}, $(u_{i}, f_{i}) \in \mathcal{IU}_{k+1}(R^{2n})$ for every $i$. 
Consider the chain $\xi := \sum_{i}n_{i}(u_{i}, f_{i}) \in C_{k+1}(n)$. Note that 
$$d\xi = \sum_{i}n_{i}(du_{i}, f_{i}) + (-1)^{k+1}\sum_{i}n_{i}u_{i} = \sum_{i}n_{i}(du_{i}, f_{i}) + (-1)^{k+1}z, $$ 
so that $z_{1} := (-1)^{k+1}\sum_{i}n_{i}(du_{i}, f_{i})$ is homologous to $z$, which we write as $z_{1} \sim z$.
Now, recursively assume that $z_{q} \in C_{k}(n)$ is cycle such that $z_{q} \sim \sum_{i}(u_{i}, f_{i})$, where $u_{i} \in \mathcal{IU}_{p}(R^{2n})$;  $f_{i} \in \mathbb{Z}[\mathcal{U}_{q}(W)]$,  $W$ is a free totally isotropic summand of $R^{2n}$ of dimension $n$ in general position with respect to all $u_{i}$, $p,q \geq 0$ such that $p+q = k < n$.
Then we collect terms so that $u_{i} \neq u_{j}$ for every $i \neq j$. 
By assumption, we have 
\begin{align*}
0 = dz_{q} &= \sum_{i}d(u_{i}, f_{i}) % \\
%&
= \sum_{i}\left[ (du_{i}, f_{i}) + (-1)^{p+1}(u_{i}, df_{i}) \right].
\end{align*}
As $u_{i} \neq u_{j}$ and $W$ is in general position with every $u_i$, hence, no column vector of $u_i$ is in $W$, we deduce $df_{i} = 0$ for every $i$.
Therefore, by Lemma \ref{lemma:acyclic}, for every $i$, there exists $g_{i} \in \mathbb{Z}[\mathcal{U}_{q+1}(W)]$ such that $dg_{i} = f_{i}$ and $(u_{i}, g_{i}) \in \mathbb{Z}[\mathcal{IU}_{k+1}(R^{2n})]$.
Note that 
\begin{align*}
d(u_{i}, g_{i}) &= (du_{i}, g_{i}) + (-1)^{p+1}(u_{i}, dg_{i}) %\\
%&
= (du_{i}, g_{i}) + (-1)^{p+1}(u_{i}, f_{i}). 
\end{align*}
We deduce $z_{q} \sim \sum_{i}(u_{i}, f_{i}) \sim (-1)^p \sum_{i}(du_{i}, g_{i}) =z_{q+1}$.
The corollary is the case $q=k$, $p=0$ setting $z'=z_k$.
\end{proof}

\begin{theorem}
\label{thm:C*acyclic}
The complex $(C_{*}(n), d)$ is $(n-1)-acyclic$, that is, 
$$H_i(C_{*}(n), d)=0\hspace{3ex}\text{ for } i\leq n-1.$$
\end{theorem}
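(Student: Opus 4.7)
The strategy is to combine Corollary~\ref{corollary:acyclic} (which reduces the problem to cycles supported on a totally isotropic summand of maximal rank) with Lemma~\ref{lemma:acyclic} (which provides the fill-ins inside such a summand), applied in the degenerate case where the auxiliary sequence $u$ is empty.

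Concretely, suppose $k\leq n-1$ and let $z\in C_k(n)$ be a cycle. By Corollary~\ref{corollary:acyclic}, $z$ is homologous to a cycle $z'\in \mathbb{Z}[\mathcal{U}_k(W)]\subset C_k(n)$ whose support lies inside some free totally isotropic summand $W$ of $R^{2n}$ of rank $n$. Since $W$ is totally isotropic, every unimodular sequence in $W$ is automatically totally isotropic, so $\mathbb{Z}[\mathcal{U}_\ast(W)]$ sits inside $C_\ast(n)$ as a subcomplex and $dz'=0$ in both complexes simultaneously.

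Now I apply Lemma~\ref{lemma:acyclic} with $p=0$, $q=k$, taking $u$ to be the empty sequence (so $U=\mathrm{Span}(u)=0$) and $f=z'$. The hypothesis $p+q<n$ becomes $k<n$, which holds; the compatibility condition $(u,f)\in\mathbb{Z}[\mathcal{IU}_{p+q}(R^{2n})]$ reduces to $z'\in\mathbb{Z}[\mathcal{IU}_k(R^{2n})]$, which is true; general position of $W$ with respect to $U=0$ is vacuous; and the ambient subspace $L=W\cap U^{\perp}$ is just $W$, of the required rank $n-p=n>q$. The lemma then yields $g\in\mathbb{Z}[\mathcal{U}_{k+1}(W)]\subseteq C_{k+1}(n)$ with $dg=z'$, exhibiting $z'$, and hence $z$, as a boundary.

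I do not expect a significant obstacle here: the work has been front-loaded into the corollary and the lemma. The only thing to verify carefully is that the $p=0$ case of Lemma~\ref{lemma:acyclic} is legitimate; inspecting its proof, everything reduces to finding one more unimodular vector inside the rank-$n$ free module $W$ avoiding finitely many proper conditions, which is precisely the standard fact about local rings with infinite residue field used there. Consequently $H_i(C_\ast(n),d)=0$ for $i\leq n-1$, proving the theorem.
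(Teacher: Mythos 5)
Your proposal is correct and follows essentially the same route as the paper: the paper's proof also reduces to a cycle $z'\in\mathbb{Z}[\mathcal{U}_k(W)]$ via Corollary~\ref{corollary:acyclic} and then fills it in by the "standard argument recalled in the proof of Lemma~\ref{lemma:acyclic}," which is exactly the $p=0$ instance you formalise. Your version is just slightly more explicit about invoking Lemma~\ref{lemma:acyclic} in the degenerate case rather than re-deriving the coning step.
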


\begin{proof}
Let $k \leq n-1$ and let $z \in C_{k}(n)$ a cycle. 
By Corollary \ref{corollary:acyclic}, $z$ is homologous to a cycle $z'\in \mathbb{Z}[\mathcal{U}_{k}(W)]$
contained within a free totally isotropic summand $W$ of $R^{2n}$ of dimension $n$. 
As $R$ is a local ring with infinite residue field, we deduce that there exists a $\tau \in \mathbb{Z}[\mathcal{U}_{k+1}(W)]\subset C_{k+1}(n)$ such that $d\tau = z'$, by the standard argument recalled in the proof of Lemma \ref{lemma:acyclic}.
In paricular, $z$ is a boundary. 
\end{proof}

\section{Homological stability for $O_{n,n}$} \label{sec:homstability}
From now on, unless stated otherwise, $R$ will be a commutative local ring with infinite residue field and $2 \in R^{*}$. We will also abbreviate the integral homology of a group $G$ as $H_{k}(G)$ whenever it is convenient for us to do so.
\subsection{Transitivity of the group action}
We need to prove that the action of $O_{n,n}$ on $\mathcal{IU}_{p}(R^{2n})$ is \textit{transitive} for all $p \leq n$. It suffices to prove the following lemma.

\begin{lemma} \label{lem:hyperbolicbasis}
Let $p \leq n$ and let $(u_{1}, \dots, u_{p}) \in \mathcal{IU}_{p}(R^{2n})$. Then, $(u_{1},\dots, u_{p})$ may be extended to a hyperbolic basis of $R^{2n}$. 
\end{lemma} 
\begin{proof}
By Witt's Cancellation Theorem, which holds when $R$ is a local ring with 2 invertible (cf. \cite[Chapter I, Theorem 4.4]{milnor1973symmetric}), it will be sufficient to find $u_{1}^{\#}, \dots, u_{p}^{\#}$ such that $(u_{1}, u_{1}^{\#}, \dots, u_{p}, u_{p}^{\#})$ has Gram matrix $\psi_{2p}$. (Note that Span$\{u_{1}, u_{1}^{\#}, \dots, u_{p}, u_{p}^{\#}\}$ is a non-degenerate subspace).

We have that $(u_{1}, \dots, u_{p}) \in \mathcal{IU}_{p}(R^{2n})$, so this sequence is in particular a unimodular sequence of vectors in $R^{2n}$. Thus, the matrix $u=(u_{1}, \dots, u_{p})$ is left invertible. Therefore,  the matrix ${^tu} \psi_{2n}$ is right invertible. This is equivalent to saying that the map
\begin{align*}
T : &R^{2n} \rightarrow R^{p} \\
&x \mapsto \left( \langle u_{1}, x \rangle, \dots, \langle u_{p}, x \rangle  \right)
\end{align*}
is surjective. 
Thus, for $i=1,...,p$, there exists ${u}^{\#}_{i}$ such that $T({u}^{\#}_{i})$ is the $i$-th standard basis vector of $R^p$.
Replacing $u_{i}^{\#}$ with $u_{i}^{\#} - \frac{\langle u_{i}^{\#}, u_{i}^{\#} \rangle}{2}u_{i}$, we conclude the Gram matrix of $(u_{1}, u_{1}^{\#}, \dots, u_{p}, u_{p}^{\#})$ is $\psi_{2p}$.
\end{proof}

\subsection{Analysis of stabilisers} \label{subsection:stabilisers}
\subsubsection{Computation of stabilisers}

Let $G$ be a group acting on a set $S$ from the left. Shapiro's Lemma gives an isomorphism 
\[
\bigoplus_{[x] \in S/G} (i_{x}, x)_{*} : \bigoplus_{[x] \in S/G} H_{*}(G_{x}, \mathbb{Z}) \xrightarrow{\cong} H_{*}(G, \mathbb{Z}[S])
\]
of homology groups, where the direct sum is over a set of representatives $x \in S$ of equivalence classes $[x] \in S/G$; the group $G_{x}$ is the \textit{stabiliser} of $G$ at $x \in S$; the homomorphism $i_{x} : G_{x} \subseteq G$ is the inclusion; and $x$ also denotes the homomorphism of abelian groups $\mathbb{Z} \rightarrow \mathbb{Z}[S]: 1 \mapsto x$. For example, see \cite[Chapter III, Corollary 5.4 and Proposition 6.2]{brown1982cohomology}.

We apply Shapiro's Lemma in the case $G = O_{n,n}(R)$ and $S = \mathcal{IU}_{p}(R^{2n})$. In particular, as the action of $O_{n,n}(R)$ on $\mathcal{IU}_{p}(R^{2n})$ is \textit{transitive} for all $p \leq n$, Shapiro's Lemma gives isomorphisms
\begin{equation}
\label{eqn:ShapiroTkOn}
H_{*}(St(e_{1}, \dots, e_{p})) \xrightarrow{\cong} H_{*}(O_{n,n}, C_{p}(n)), 
\end{equation}
for all $p \leq n$, where $St(e_{1}, \dots, e_{p})$ denotes the stabiliser of $(e_{1}, \dots, e_{p}) \in \mathcal{IU}_{p}(R^{2n})$.
We compute these stabilisers:

\begin{proposition} \label{prop:stabilisers}
Let $1\leq k \leq n$. Then, in the above notation, the stabilisers $A \in St(e_{1}, \dots , e_{k})$ are of the form
\[
A =  
\begin{pmatrix}
1 & c^{1}_{1} & 0 & c^{1}_{2} & \cdots & 0 & c^{1}_{k} & ^{t}u_{1} \\
0 & 1 & 0 & 0 & \cdots & 0 & 0 & 0 \\
0 & c^{2}_{1} & 1 & c^{2}_{2} & \cdots & 0 & c^{2}_{k} & ^{t}u_{2} \\
0 & 0 & 0 & 1 & \cdots & 0 & 0 & 0 \\
\vdots & \vdots & \vdots & \vdots & \ddots & \vdots & \vdots & \vdots \\
0 & c^{k}_{1} & 0 & c^{k}_{2} & \cdots & 1 & c^{k}_{k} & ^{t}u_{k} \\
0 & 0 & 0 & 0 & \cdots & 0 & 1 & 0 \\
0 & x_{1} & 0 & x_{2} & \cdots & 0 & x_{k} & B
\end{pmatrix}
\]
where  $c^{i}_{j} \in R; u_{i}, x_{i} \in R^{2(n-k)}$ and $B \in M_{2(n-k)}(R)$, subject to the conditions
\begin{align}
&u_{i} + \, ^{t}B\psi_{2(n-k)} x_{i} = 0, \label{eqn:stab1} \\                
&c^{i}_{j} + c^{j}_{i} + \langle x_{i}, x_{j} \rangle = 0, \label{eqn:stab2} \\  
&B \in O_{n-k,n-k}. \label{eqn:stab3}           
\end{align}
\end{proposition}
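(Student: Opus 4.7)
The plan is to decode the matrix shape column-by-column from the two constraints $Ae_i = e_i$ (for $i \leq k$) and ${}^tA\psi A = \psi$, and then check the converse. First, $Ae_i = e_i$ pins down the columns $1, 3, \ldots, 2k-1$ of $A$ as $e_1, \ldots, e_k$. Then, for every standard basis vector $v$ of $R^{2n}$ and every $i \leq k$, the identity $\langle e_i, Av\rangle = \langle Ae_i, Av\rangle = \langle e_i, v\rangle$ combined with the fact that $\langle e_i, w\rangle$ equals the $(2i)$-th coordinate of any $w \in R^{2n}$ forces the $(2i)$-th row of $A$ to have a $1$ in column $2i$ and $0$'s in every other column, both in the first $2k$ and in the last $2(n-k)$. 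After these identifications, the only unknown entries of $A$ are exactly the scalars $c_j^i$ in row $2i-1$ and column $2j$ (with $i,j\leq k$), the column vectors $x_j \in R^{2(n-k)}$ in the last block of column $2j$, the row vectors ${}^tu_i$ in the last $2(n-k)$ entries of row $2i-1$, and the block $B$ in the lower-right $2(n-k)\times 2(n-k)$ corner.

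With this parametrization, the remaining orthogonality conditions $\langle A\alpha, A\beta\rangle = \langle \alpha, \beta\rangle$ split into three families. Writing $Af_j = \sum_a c_j^a e_a + f_j + x_j$ (with $x_j$ embedded in the last $2(n-k)$ coordinates) and denoting the last $2(n-k)$ columns by $w_\ell = \sum_i (u_i)_\ell e_i + B_\ell$ where $B_\ell$ is the $\ell$-th column of $B$, the block-diagonal form of $\psi$ makes the relevant inner products collapse. The conditions $\langle Af_i, Af_j\rangle = 0$ expand to $c_i^j + c_j^i + \langle x_i, x_j\rangle = 0$, recovering (\ref{eqn:stab2}). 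The conditions $\langle Af_j, w_\ell\rangle = 0$ expand to $(u_j)_\ell + \langle x_j, B_\ell\rangle = 0$, which assembles over all $\ell$ into $u_j + {}^tB\psi x_j = 0$, recovering (\ref{eqn:stab1}). Finally, the inner products among the last $2(n-k)$ columns reduce to $\langle B_\ell, B_m\rangle = (\psi_{2(n-k)})_{\ell m}$, i.e.\ $B \in O_{n-k,n-k}(R)$, recovering (\ref{eqn:stab3}).

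For the converse, the same column-by-column inner-product computation runs backwards: any matrix of the displayed form satisfying (\ref{eqn:stab1})--(\ref{eqn:stab3}) automatically preserves $\psi$ and fixes $e_1, \ldots, e_k$, so lies in $St(e_1, \ldots, e_k)$. The main obstacle is purely bookkeeping: correctly pairing row and column indices with the $e_i$/$f_i$ convention, and translating the scalar equations $(u_j)_\ell + \langle x_j, B_\ell\rangle = 0$ into the clean matrix identity $u_j + {}^tB\psi x_j = 0$ via the symmetry of $\psi$. No deeper idea is required.
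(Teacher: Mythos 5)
Your proof is correct and follows essentially the same route as the paper: use $Ae_i=e_i$ to fix columns $1,3,\dots,2k-1$, use $\langle e_i, A\,\cdot\,\rangle = \langle e_i, \cdot\,\rangle$ to fix rows $2,4,\dots,2k$, and then read the three constraints off the remaining part of ${}^tA\psi A=\psi$. The only (purely presentational) difference is that the paper displays the full matrix product ${}^tA\psi A$ in block form and reads the conditions from its entries, whereas you expand the same relation pairing-by-pairing as inner products $\langle A\alpha,A\beta\rangle=\langle\alpha,\beta\rangle$; you also state the converse explicitly, which the paper leaves implicit.
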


For example, for $k = 1$, we have 
\[
St(e_{1}) = 
\left\{
\begin{pmatrix}
1 & c & ^{t}u \\
0 & 1 & 0 \\
0 & x & B
\end{pmatrix}
\Bigg| \,\, 
u + \, ^{t}B\psi_{2(n-1)} x = 0 ;  \,\,
2c + \langle x, x \rangle = 0 ; \,\,
B \in O_{n-1, n-1}.
\right\}.
\]

\begin{proof}
Let $A \in St (e_{1}, \dots , e_{k})$. Then, $Ae_{i} = e_{i}$ for all $1\leq i \leq k$ by definition, which gives the 1st, 3rd, $\dots$, $(2k-1)$st columns of $A$. 
Moreover, for a fixed $1 \leq i \leq k$ and any $1 \leq j \leq n$, we have 
\begin{align*}
\langle e_{i}, Ae_{j} \rangle = \langle Ae_{i}, Ae_{j} \rangle = \langle e_{i}, e_{j} \rangle = 0
\end{align*}
and 
\begin{align*}
\langle e_{i}, Af_{j} \rangle = \langle Ae_{i}, Af_{j} \rangle = \langle e_{i}, f_{j} \rangle = \delta_{ij}.
\end{align*}
Therefore, as $\langle e_{k}, e_{l} \rangle = 0$ and $\langle e_{k}, f_{l} \rangle = \delta_{kl}$ for all $1\leq k, l \leq n$, we deduce that the coefficient of $f_{i}$ in the expression for $Ae_{j}$ and $Af_{j}$ is $0$ for all $j \neq i$ and the coefficient of $f_{i}$ in the expression for $Af_{i}$ is $1$. This gives the 2nd, 4th, $\dots$, $2k$th rows of $A$.
The remaining coefficients give the $c^{i}_{j} \in R; u_{i}, x_{i} \in R^{2(n-k)}$ and $B \in M_{2(n-k)}(R)$. We use the equation $^{t}A \psi_{2n} A = \psi_{2n}$ to determine the conditions on these variables. 
Specifically, one has that for any $A \in St(e_{1}, \dots, e_{k})$, 
{\tiny
%\begin{align*}
$$\begin{array}{l}
^{t}A \psi A 
 = \begin{pmatrix}
1 & 0 & 0 & 0 & \cdots & 0 & 0 & 0 \\
c^{1}_{1} & 1 & c^{2}_{1} & 0 & \cdots & c^{k}_{1} & 0 & ^{t}x_{1} \\
0 & 0 & 1 & 0 & \cdots & 0 & 0 & 0 \\
c^{1}_{2} & 0 & c^{2}_{2} & 1 & \cdots & c^{k}_{2} & 0 & ^{t}x_{2}  \\
\vdots & \vdots & \vdots & \vdots & \ddots & \vdots & \vdots & \vdots \\
0 & 0 & 0 & 0 & \cdots & 1 & 0 & 0 \\
c^{1}_{k} & 0 & c^{2}_{k} & 0 & \cdots & c^{k}_{k} & 1 & ^{t}x_{k} \\
u_{1} & 0 & u_{2} &0 & \cdots & u_{k} & 0 & ^{t}B
\end{pmatrix}
\psi
\begin{pmatrix}
1 & c^{1}_{1} & 0 & c^{1}_{2} & \cdots & 0 & c^{1}_{k} & ^{t}u_{1} \\
0 & 1 & 0 & 0 & \cdots & 0 & 0 & 0 \\
0 & c^{2}_{1} & 1 & c^{2}_{2} & \cdots & 0 & c^{2}_{k} & ^{t}u_{2} \\
0 & 0 & 0 & 1 & \cdots & 0 & 0 & 0 \\
\vdots & \vdots & \vdots & \vdots & \ddots & \vdots & \vdots & \vdots \\
0 & c^{k}_{1} & 0 & c^{k}_{2} & \cdots & 1 & c^{k}_{k} & ^{t}u_{k} \\
0 & 0 & 0 & 0 & \cdots & 0 & 1 & 0 \\
0 & x_{1} & 0 & x_{2} & \cdots & 0 & x_{k} & B
\end{pmatrix} \\
\\
= \begin{pmatrix}
0 & 1 & 0 & 0 & \cdots & 0 & 0 & 0 \\
1 & c^{1}_{1} + c^{1}_{1} + \langle x_{1}, x_{1} \rangle & 0 & c^{1}_{2} + c^{2}_{1} + \langle x_{1}, x_{2}\rangle & \cdots & 0 & c^{1}_{k} + c^{k}_{1} + \langle x_{1}, x_{k}\rangle & ^{t}u_{1} + \, ^{t}x_{1}\psi B \\
0 & 0 & 0 & 1 & \cdots & 0 & 0 & 0 \\
0 & c^{1}_{2} + c^{2}_{1} + \langle x_{2}, x_{1} \rangle & 1 & c^{2}_{2} + c^{2}_{2} + \langle x_{2}, x_{2} \rangle & \cdots & 0 & c^{2}_{k} + c^{k}_{2} + \langle x_{2}, x_{k} \rangle & ^{t}u_{2} + \, ^{t}x_{2}\psi B   \\
\vdots & \vdots & \vdots & \vdots & \ddots & \vdots & \vdots & \vdots \\
0 & 0 & 0 & 0 & \cdots & 0 & 1 & 0 \\
0 & c^{1}_{k} + c^{k}_{1} + \langle x_{k}, x_{1} \rangle & 0 & c^{2}_{k} + c^{k}_{2} + \langle x_{k}, x_{2} \rangle & \cdots & 1 & c^{k}_{k} + c^{k}_{k} + \langle x_{k}, x_{k} \rangle & ^{t}u_{k} + \, ^{t}x_{k}\psi B  \\
0 & u_{1} + \, ^{t}B\psi x_{1} & 0 & u_{2} + \, ^{t}B\psi x_{2} & \cdots & 0 & u_{k} + \, ^{t}B\psi x_{k} & ^{t}B\psi B
\end{pmatrix} \\
\\
= \psi.
\end{array}$$
%\end{align*} 
}
Whence the equations. 
\end{proof}

To ease notation, we will from now on denote $T_{k} := St(e_{1}, \dots, e_{k})$. We will use the convention that $T_{0} = O_{n,n}$. 
Note, we may see from the structure of the matrices in $T_{k}$ that the projection map $\rho:T_{k} \twoheadrightarrow O_{n-k, n-k}$, sending the matrix $A$ in Proposition \ref{prop:stabilisers} to $\rho(A)=B$, defines a \textit{group homomorphism}. 
We denote its kernel by $L_{k}$, so that we have a short exact sequence of groups 
\begin{align}
1 \rightarrow L_{k} \rightarrow T_{k} \xrightarrow{\rho} O_{n-k, n-k} \rightarrow 1.    \label{s.e.s:localising}
\end{align} 
The associated Hochschild-Serre Spectral Sequence is 
\begin{align}
E^{2}_{p,q} = H_{p}(O_{n-k, n-k}; H_{q}(L_{k})) \Rightarrow H_{p+q}(T_{k}).        \label{ss:localising}
\end{align}

\subsubsection{The local $R^*$-action} \label{subsubsec:local $R^*$-action}

In this section, we will define an $R^{*}$-action on short exact sequence (\ref{s.e.s:localising}) which we call `local action'.
Using Spectral Sequence (\ref{ss:localising}), we will show that, \textit{after localisation}, the homology of $T_{k}$ and $O_{n-k, n-k}$ coincide.
In the next section, we will see that the local actions are induced by a `global' $R^{*}$-action on the Spectral Sequence (\ref{ss:secondhyperhomology}). 

\vspace{1ex}

\begin{definition}[Local action]
Let $a \in R^{*}$. For $0 \leq k \leq n$, define a $2n \times 2n$ matrix $D_{a,k}$ by
\[
D_{a,k} := 
\begin{pmatrix}
D_{a} \\
& \ddots \\
&&D_{a} \\
&&&1_{2n-2k}
\end{pmatrix}
= \left(\bigoplus_{1}^{k}D_{a}\right) \bigoplus 1_{2(n-k)}, \quad D_{a} :=
\begin{pmatrix}
a & 0 \\
0 & a^{-1}
\end{pmatrix}.
\]
Note that $D_{a,k} \in O_{n,n}(R)$. 
The {\em local action} of $R^*$ on $T_k$ is the conjugation action of $D_{a,k}$ on $T_{k}$. 
\end{definition}

The local action preserves $T_k$ because
\begin{align*}
D_{a,k} 
&\begin{pmatrix}
1 & c^{1}_{1} & 0 & c^{1}_{2} & \cdots & 0 & c^{1}_{k} & ^{t}u_{1} \\
0 & 1 & 0 & 0 & \cdots & 0 & 0 & 0 \\
0 & c^{2}_{1} & 1 & c^{2}_{2} & \cdots & 0 & c^{2}_{k} & ^{t}u_{2} \\
0 & 0 & 0 & 1 & \cdots & 0 & 0 & 0 \\
\vdots & \vdots & \vdots & \vdots & \ddots & \vdots & \vdots & \vdots \\
0 & c^{k}_{1} & 0 & c^{k}_{2} & \cdots & 1 & c^{k}_{k} & ^{t}u_{k} \\
0 & 0 & 0 & 0 & \cdots & 0 & 1 & 0 \\
0 & x_{1} & 0 & x_{2} & \cdots & 0 & x_{k} & B
\end{pmatrix}
D_{a,k}^{-1} \\
&\\
= &\begin{pmatrix}
1 & a^{2}c^{1}_{1} & 0 & a^{2}c^{1}_{2} & \cdots & 0 & a^{2}c^{1}_{k} & {}^{t}u_{1}a \\
0 & 1 & 0 & 0 & \cdots & 0 & 0 & 0 \\
0 & a^{2}c^{2}_{1} & 1 & a^{2}c^{2}_{2} & \cdots & 0 & a^{2}c^{2}_{k} & {}^{t}u_{2}a \\
0 & 0 & 0 & 1 & \cdots & 0 & 0 & 0 \\
\vdots & \vdots & \vdots & \vdots & \ddots & \vdots & \vdots & \vdots \\
0 & a^{2}c^{k}_{1} & 0 & a^{2}c^{k}_{2} & \cdots & 1 & a^{2}c^{k}_{k} & {}^{t}u_{k}a \\
0 & 0 & 0 & 0 & \cdots & 0 & 1 & 0 \\
0 & ax_{1} & 0 & ax_{2} & \cdots & 0 & ax_{k} & B
\end{pmatrix} \in T_{k}.
\end{align*}
Also notice that this conjugation action restricted to $O_{n-k, n-k} \subset T_{k}$ is \textit{trivial}. Thus, we have an $R^{*}$-action on short exact sequence (\ref{s.e.s:localising}), which will induce an $R^{*}$-action on Spectral Sequence (\ref{ss:localising}).

We now introduce the idea of \textit{localising homology groups}.
Let $m \geq 1$ be an integer. 
Choose units $a_{1}, \dots, a_{m} \in R^{*}$ such that for every non-empty subset $I \subset \{1,\dots,m\}$, the partial sum $a_{I} := \sum_{i\in I} a_{i}$ is a unit in $R$. 
Call such a sequence $(a_{1}, \dots, a_{m})$ an $S(m)$-sequence. 
Choosing an $S(m)$-sequence is possible for every $m >0$  because $R$ has infinite residue field. 

Let $s_{m} \in \mathbb{Z}[R^{*}]$ be the element 
\[
s_{m} = -\sum_{\emptyset \neq I \subset \{1,\dots,m\}}(-1)^{|I|}\langle a_{I} \rangle \in \mathbb{Z}[R^{*}],
\]
first considered in \cite{schlichting2017euler}, where $\langle u \rangle \in \mathbb{Z}[R^{*}]$ denotes the element of the group ring corresponding to $u \in R^{*}$. 
Note that 
\[
1 = -\sum_{\emptyset \neq I \subset \{1,\dots,m\}}(-1)^{|I|},
\]
so that a trivial $R^{*}$-action induces a trivial action by the elements $s_{m}$. 
If $R^*$ acts on a group $G$ through group homomorphisms, then the homology groups $H_n(G)$ aquire an $R^*$-action by functoriality of group homology. 
This makes the groups $H_n(G)$ into a left module over the commutative ring $\Z[R^*]$, and we can localize them with respect to the element $s_{m} \in \Z[R^*]$ to obtain the abelian groups $s_{m}^{-1}H_n(G)$.
The magic of the elements $s_{m}$ lie in the following proposition, for the proof of which we refer the reader to \cite[Proposition D.4.]{schlichting2019higher}.
\vspace{1ex}

\begin{proposition} 
\label{prop:magicallykill}
Let $R$ be a commutative ring and  $u = (u_{1}, \dots, u_{m})$ an $S(m)$-sequence in $R$. Let 
\[
1\rightarrow N \rightarrow G \rightarrow A \rightarrow 1
\]
be a central extension of groups. Assume that the group of units $R^{*}$ acts on the exact sequence. Assume furthermore that the groups $A$ and $N$ are the underlying abelian groups $(A, +, 0)$ and $(N,+,0)$ of $R$-modules $(A, +, 0,\cdot)$ and $(N,+,0,\cdot)$, and the $R^{*}$-actions on $A$ and $N$ in the exact sequence are  given by 
\[
R^{*} \times A \rightarrow A : (t,a) \mapsto t \cdot a
\]
and 
\[
R^{*} \times N \rightarrow N : (t,y) \mapsto t^{2} \cdot y
\]
respectively. 
Then, for every $1 \leq n <  m/2 $, 
\[
s_{m}^{-1}H_{n}(G) = 0.
\]
\end{proposition}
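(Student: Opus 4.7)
The plan is to analyse the central extension via its Hochschild--Serre spectral sequence and show that $s_m$ annihilates each $E^2$-term contributing to $H_n(G)$. Since $N$ is central, $A$ acts trivially on $N$ and hence on every $H_q(N)$, so the spectral sequence takes the form
\[
E^2_{p,q} = H_p(A;H_q(N)) \Longrightarrow H_{p+q}(G),
\]
and it carries an induced $R^*$-action compatible with the $R^*$-action on the original extension. In particular, on the $E^2$-page the action of $\langle u\rangle$ is the one induced by multiplication by $u$ on $A$ combined with multiplication by $u^2$ on $N$.

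The central calculation is the following principle, which I would record as a preliminary lemma: if $M$ is the underlying abelian group of an $R$-module and $R^*$ acts on $M$ by $u\cdot x = u^d x$ for some fixed $d\geq 1$, then on $H_p(M;\mathbb{Z})$ the element $\langle u\rangle$ acts, up to a nilpotent filtered correction, as multiplication by $u^{pd}$. (On the associated graded, where $H_p(M)$ becomes $\Lambda^p M$, this is immediate.) Applying this with $d=1$ to $A$ and $d=2$ to $N$, the action of $\langle u\rangle$ on $E^2_{p,q}$ is, modulo filtration, multiplication by $u^{p+2q}$.

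Next I would invoke the combinatorial identity underlying $s_m$: for any integer $e$ with $1\leq e<m$,
\[
-\sum_{\emptyset\neq I\subseteq\{1,\dots,m\}}(-1)^{|I|}\,\Bigl(\sum_{i\in I}u_i\Bigr)^{e} \;=\; 0,
\]
which follows by multinomial expansion together with the observation that no tuple of strictly positive integers can sum to $e<m$. Therefore $s_m$ acts by $0$ on any $R^*$-module on which $\langle u\rangle$ acts as $u^e$ for some $1\leq e<m$. For $p+q=n$ with $1\leq n<m/2$ we have $e:=p+2q=n+q$ satisfying $1\leq e\leq 2n<m$, so $s_m$ kills each $E^2_{p,q}$ on the total-degree-$n$ antidiagonal (once the nilpotent correction is inverted). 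Since localisation at the multiplicative system generated by $s_m$ is exact, it commutes with the spectral sequence, and convergence then yields $s_m^{-1}H_n(G)=0$.

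The main obstacle is the passage from the clean ``eigenvalue'' picture $\langle u\rangle \equiv u^{pd}$ on $H_p(M)$ to the genuine action on a non-semisimple $R^*$-module: for $R$ not a $\mathbb{Q}$-algebra, $H_p(M)$ has a natural filtration whose graded pieces realise this eigenvalue, but the unfiltered module only satisfies the identity up to a unipotent error. Making this rigorous requires either working with the bar complex and tracking the $R^*$-action through a free resolution, or bootstrapping by replacing $s_m$ with $s_{m'}$ for slightly larger $m'$ and showing the error is absorbed. Once this technical point is in hand, the spectral-sequence argument runs cleanly.
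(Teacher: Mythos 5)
Your overall strategy -- run the Hochschild--Serre spectral sequence of the central extension and kill the $E^2$-terms by the $s_m$-combinatorics -- is the standard one, and the spectral-sequence mechanics (triviality of the $A$-action on $H_q(N)$, exactness of localisation, convergence) are handled correctly. The degree bookkeeping $p+2q = n+q \le 2n < m$ is also the right count. The problem is the ``preliminary lemma'' on which everything rests: the assertion that on $H_p(M)$ the element $\langle u\rangle$ acts, \emph{up to a nilpotent filtered correction}, as multiplication by $u^{pd}$, with the parenthetical justification that ``on the associated graded, $H_p(M)$ becomes $\Lambda^p M$.'' This is not correct for a general abelian group $M$: already for $M=\mathbb{Z}/n$ one has $H_3(M)=\mathbb{Z}/n$, on which $\mu_u$ acts as $u^2$, not $u^3$, and $\Lambda^3 M = 0$ so it is not even an associated graded piece. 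More importantly, the claim is never proved in any form, and the two suggested repairs do not work as stated: ``bootstrapping by replacing $s_m$ with $s_{m'}$'' cannot prove the proposition \emph{for the given} $m$, and ``tracking the $R^*$-action through the bar complex'' is exactly the missing content, not a fix.

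The correct argument avoids eigenvalues entirely and works directly with the Hopf-algebra structure on $H_*(V)$ for an abelian group $V$ (Pontryagin product and Alexander--Whitney coproduct). The key identity is $(f+g)_* = m\circ(f_*\otimes g_*)\circ\Delta$ for endomorphisms $f,g$ of $V$, iterated; together with the vanishing of the $k$-fold reduced coproduct $\bar\Delta^{(k)}$ in homological degrees $<k$. For $d=1$ this gives at once that $s_m$ kills $H_p(A)$ for $p<m$. For $d=2$ the set of endomorphisms $\mu_{a_I^2}$ is \emph{not} of the form $\sum_{i\in I}b_i$, so the $d=1$ computation does not transfer, and your quoted polynomial identity $\sum_{\emptyset\neq I}(-1)^{|I|}a_I^e = 0$ for $e<m$ by itself says nothing about $\mu_{a_I^2,*}$ on $H_q(N)$; one must instead expand $a_I^2 = \sum_{i\in I}a_i^2 + 2\sum_{\{i,j\}\subseteq I}a_ia_j$, index the summands by $\hat I := I\sqcup\binom{I}{2}$, and observe after inclusion--exclusion that the only surviving terms of $\sum_I(-1)^{|I|}\mu_{a_I^2,*}(z)$ involve $\bar\Delta^{(|T|)}(z)$ with $T$ covering all of $\{1,\dots,m\}$ by singletons and pairs, forcing $|T|\geq\lceil m/2\rceil$; since this vanishes for $\deg z < m/2$, one gets $s_m H_q(N)=0$ for $q<m/2$. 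That combinatorial step is the genuine content of the proposition and is precisely what your proposal leaves open.
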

\qed

We may now \textit{localise} the Spectral Sequence  (\ref{ss:localising}) with respect to the elements $s_{m}$ to obtain for all $m \geq 1$ the localised spectral sequences
\begin{equation}
\label{ss:localised}
\begin{array}{ll}
s_{m}^{-1}E^{2}_{pq} &= s_{m}^{-1}H_{p}(O_{n-k,n-k}; H_{q}(L_{k}) )\\
&\\
& \cong H_{p}(O_{n-k,n-k}; s_{m}^{-1}H_{q}(L_{k})) 
\end{array}
\Rightarrow s_{m}^{-1}H_{p+q}(T_{k}),   
\end{equation}
the isomorphism coming from the fact that $R^{*}$ acts trivially on $O_{n-k,n-k}$. 

We show that localising with respect to the elements $s_{m}$ \textit{kills} the non-zero homology groups of $L_{k}$ when $m$ is taken to infinity. 

\begin{lemma} \label{lemma:kill}
We have $s_{m}^{-1}H_{0}(L_{k}) = \mathbb{Z} $ and for all $1\leq 2q < m$, $s_{m}^{-1}H_{q}(L_{k}) = 0$.
\end{lemma}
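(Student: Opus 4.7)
The plan is to realise $L_k$ as the middle term of a central extension to which Proposition \ref{prop:magicallykill} applies verbatim, and to handle the $H_0$ case separately using triviality of the induced action on $\mathbb{Z}$.

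First I would make $L_k$ completely explicit via Proposition \ref{prop:stabilisers}. An element of $L_k$ is a matrix of the displayed form with $B = I_{n-k}$; relations \eqref{eqn:stab1}--\eqref{eqn:stab3} then collapse to $u_i = -\psi x_i$, $c^i_j + c^j_i = -\langle x_i, x_j\rangle$ for $i \neq j$, and $c^i_i = -\tfrac{1}{2}\langle x_i, x_i\rangle$ (using $2 \in R^*$). Thus $L_k$ is parametrised as a set by pairs $(\mathbf{c}, \mathbf{x})$ with $\mathbf{c} = (c^i_j)_{i<j} \in R^{\binom{k}{2}}$ and $\mathbf{x} = (x_1,\dots,x_k) \in (R^{2(n-k)})^k$. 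A direct matrix multiplication shows the group law takes the shape
\[
(\mathbf{c},\mathbf{x})\cdot(\mathbf{c}',\mathbf{x}') = \bigl(\mathbf{c}+\mathbf{c}' + \beta(\mathbf{x},\mathbf{x}'),\ \mathbf{x}+\mathbf{x}'\bigr),
\]
where $\beta$ is a fixed bilinear pairing valued in $R^{\binom{k}{2}}$. Consequently $N := \{(\mathbf{c},0)\}$ is a central subgroup of $L_k$, naturally the additive group of the $R$-module $R^{\binom{k}{2}}$, while the quotient $A := L_k/N$ is the additive group of the $R$-module $(R^{2(n-k)})^k$.

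Next I would observe that the central extension $1 \to N \to L_k \to A \to 1$ is $R^*$-equivariant for the local action defined above, and read off directly from the matrix conjugation already displayed that $a \in R^*$ acts on $A$ by $\mathbf{x} \mapsto a\,\mathbf{x}$ and on $N$ by $\mathbf{c} \mapsto a^2\,\mathbf{c}$. These are exactly the hypotheses of Proposition \ref{prop:magicallykill} (with $m$ as given), which therefore yields $s_m^{-1}H_q(L_k) = 0$ for every $q \geq 1$ with $2q < m$, i.e.\ the second assertion of the lemma.

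For $H_0$, the group $H_0(L_k;\mathbb{Z}) = \mathbb{Z}$ carries the trivial $R^*$-action, so each $\langle a_I\rangle$ acts as $1$; hence $s_m$ acts as $-\sum_{\emptyset \neq I\subseteq\{1,\dots,m\}}(-1)^{|I|} = 1$ (using $(1-1)^m = 0$), and localising at an element acting as the identity does nothing, giving $s_m^{-1}H_0(L_k) = \mathbb{Z}$. The main obstacle is the matrix bookkeeping of the first step: one must carefully multiply two stabiliser matrices with $B = I$ to extract the bilinear correction $\beta$, verify centrality of $N$, and confirm that the exponents of $a$ in the local action are exactly $1$ on the quotient and $2$ on the kernel, so that the situation fits Proposition \ref{prop:magicallykill} on the nose. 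Once that is pinned down, the lemma is immediate.
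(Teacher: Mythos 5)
Your proposal is correct and follows essentially the same route as the paper: realise $L_k$ as a central extension of $(R^{2(n-k)})^k$ by $R^{\binom{k}{2}}$, check the $R^*$-weights are $1$ and $2$ respectively, and invoke Proposition \ref{prop:magicallykill}, with the $H_0$ case handled by triviality of the action. The only difference is that you spell out the group law and the $s_m$-computation on $H_0$ a bit more explicitly than the paper, which simply asserts the central extension and leaves the verification as routine.
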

\begin{proof}
We claim there is a short exact sequence of groups 
\begin{align}
1 \rightarrow (R^{k \choose 2},+) \rightarrow L_{k} \rightarrow ((R^{2(n-k)})^{k},+) \rightarrow 1. \label{ses:junk}
\end{align}
The first arrow maps 
\[
(c_{1}, \dots ) \mapsto A_{(c_{1}, \dots )}
\]
where $A_{(c_{1}, \dots )} \in L_{k}$ is defined by the conditions (\ref{eqn:stab1}), (\ref{eqn:stab2}) and (\ref{eqn:stab3}) subject to $B = 1$, $x_{i} = 0$ and using equation (\ref{eqn:stab2}) to determine the remaining constants (with some ordering specified beforehand). Note that we have used here that 2 is invertible, as (\ref{eqn:stab2}) implies $2c^{i}_{i} = 0$ for all $i$. 
The second arrow maps 
\[
\begin{pmatrix}
1 & c^{1}_{1} & 0 & c^{1}_{2} & \cdots & 0 & c^{1}_{k} & ^{t}u_{1} \\
0 & 1 & 0 & 0 & \cdots & 0 & 0 & 0 \\
0 & c^{2}_{1} & 1 & c^{2}_{2} & \cdots & 0 & c^{2}_{k} & ^{t}u_{2} \\
0 & 0 & 0 & 1 & \cdots & 0 & 0 & 0 \\
\vdots & \vdots & \vdots & \vdots & \ddots & \vdots & \vdots & \vdots \\
0 & c^{k}_{1} & 0 & c^{k}_{2} & \cdots & 1 & c^{k}_{k} & ^{t}u_{k} \\
0 & 0 & 0 & 0 & \cdots & 0 & 1 & 0 \\
0 & x_{1} & 0 & x_{2} & \cdots & 0 & x_{k} & 1
\end{pmatrix}
\mapsto (x_{1}, \dots, x_{k}).
\]
One may check that these arrows define group homomorphisms, fitting into the short exact sequence  (\ref{ses:junk}), and  (\ref{ses:junk}) is actually a central extension. 

Furthermore, this central extension is $R^{*}$-equivariant where $b \in R^{*}$ acts on $(R^{k \choose 2},+)$ via pointwise multiplication by $b^{2}$, the element $b\in R^*$ acts on $L_{k}$ via conjugation by $D_{b,k}$, and it acts on $((R^{2(n-k)})^{k},+)$ via pointwise multiplication by $b$. 
By Proposition \ref{prop:magicallykill}, $s_{m}^{-1}H_{q}(L_{k}) = 0$ for all $1\leq 2q < m$. The equality $s_{m}^{-1}H_{0}(L_{k}) = \mathbb{Z}$ follows from fact that $R^{*}$ acts trivially on $H_{0}(L_{k})$.
\end{proof}
\begin{corollary} \label{corollary:stabiso}
The inclusion $O_{n-k,n-k} \hookrightarrow T_{k}$ induces isomorphism 
\[
H_{t}(O_{n-k, n-k}) \xrightarrow{\cong} s_{m}^{-1}H_{t}(T_{k})
\]
for all $t < m/2$.
\end{corollary}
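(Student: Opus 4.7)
The plan is to read off the corollary from the localised Hochschild--Serre spectral sequence (\ref{ss:localised}) together with Lemma \ref{lemma:kill}. Since $R^{*}$ acts trivially on $O_{n-k,n-k}$, we may pull the localisation inside the outer homology, so the $E^{2}$-page reads $s_{m}^{-1}E^{2}_{p,q} = H_{p}(O_{n-k,n-k};\, s_{m}^{-1}H_{q}(L_{k}))$.

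First I would verify the vanishing range. Fix $t<m/2$ and consider any $E^{2}_{p,q}$ with $p+q=t$ and $q\geq 1$. Then $1\leq q\leq t<m/2$, so $2q<m$, and Lemma \ref{lemma:kill} gives $s_{m}^{-1}H_{q}(L_{k})=0$; hence $s_{m}^{-1}E^{2}_{p,q}=0$. The same bound applies to every position hit by an outgoing differential from $E^{r}_{t,0}$: for $r\geq 2$, $d^{r}:E^{r}_{t,0}\to E^{r}_{t-r,r-1}$, and the target has $q=r-1$ with $1\leq r-1\leq t-1<m/2$, so its $E^{2}$-representative is already zero after localisation. Since incoming differentials come from positions with $q<0$, the bottom row survives intact. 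Using also $s_{m}^{-1}H_{0}(L_{k})=\mathbb{Z}$, the edge map
\[
\rho_{*}:\; s_{m}^{-1}H_{t}(T_{k})\;\xrightarrow{\cong}\; s_{m}^{-1}E^{\infty}_{t,0} \;=\; H_{t}(O_{n-k,n-k};\mathbb{Z})
\]
is therefore an isomorphism for all $t<m/2$.

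Next I would match the edge map to the map induced by $\rho:T_{k}\twoheadrightarrow O_{n-k,n-k}$, which is standard for the Hochschild--Serre edge of (\ref{s.e.s:localising}). The inclusion $i:O_{n-k,n-k}\hookrightarrow T_{k}$ of the block-diagonal subgroup is visibly a section of $\rho$, so $\rho\circ i = \mathrm{id}$. Passing to homology and post-composing with localisation, the composite
\[
H_{t}(O_{n-k,n-k})\xrightarrow{i_{*}} H_{t}(T_{k})\longrightarrow s_{m}^{-1}H_{t}(T_{k})\xrightarrow{\rho_{*}} H_{t}(O_{n-k,n-k})
\]
is the identity. Since the last arrow is an isomorphism in the range $t<m/2$, the first composite must be the two-sided inverse, and in particular the map $H_{t}(O_{n-k,n-k})\to s_{m}^{-1}H_{t}(T_{k})$ induced by the inclusion is an isomorphism.

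There is no real obstacle here; the only subtle point is bookkeeping on the localised spectral sequence, namely checking that both the $E^{2}$ entries on the line $p+q=t$ with $q\geq 1$ and all potential targets of differentials out of $E^{r}_{t,0}$ lie in the vanishing range $1\leq q<m/2$ of Lemma \ref{lemma:kill}, which we verified above using $t<m/2$.
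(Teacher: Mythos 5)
Your proof is correct and takes essentially the same route as the paper's: degenerate the localised Hochschild--Serre spectral sequence (\ref{ss:localised}) at $E^{2}$ in total degree $t < m/2$ using Lemma \ref{lemma:kill}, identify the surviving edge map with $\rho_{*}$, and invert it via the retraction $\rho\circ i = \mathrm{id}$. The paper's proof is simply a terser statement of the same argument; your version supplies the explicit bookkeeping of differentials in and out of $E^{r}_{t,0}$, which the paper leaves implicit.
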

\begin{proof}
By Lemma \ref{lemma:kill}, the localised Hochschild-Serre Spectral Sequence  \label{ss:localised} degenerates at $E^{2}$ for $1 \leq 2t < m$ to yield isomorphism 
\[
\rho: s_{m}^{-1}H_{t}(T_{k}) \xrightarrow{\cong} H_{t}(O_{n-k, n-k})
\]
for all $t < m/2$. Since $\rho$ is a retract of the inclusion, we are done. 
\end{proof}

\subsubsection{A global action on the spectral sequence} \label{subsubsection:globalaction}

Next, we want to realise these `local actions' as a `global action' on the spectral sequence 
\begin{align}
E^{1}_{p,q}(n) &= H_{q}(O_{n,n}, C_{p}(n)) \Rightarrow H_{p+q}(O_{n,n}, C_{*}(n))  \label{ss:hyperhomologyII}.
\end{align}
We do this by defining an action on the associated exact couple with abutment. 

Recall that for a group $G$ and a chain complex of $G$-modules $C_{*}$, the spectral sequence 
\[
E^{1}_{p,q} = H_{q}(G, C_{p}) \Rightarrow H_{p+q}(G, C_{*}) 
\]
may be obtained from the exact couple with abutment 
\begin{equation}
\begin{tikzcd}
\bigoplus_{p,q} E^{1}_{p,q} \arrow{r}{k} 
  & \bigoplus_{p,q} D^{1}_{p,q} \arrow{d}{i} \arrow{r}{\sigma} 
    & \bigoplus_{p+q} A_{p+q}  \\
    & \bigoplus_{p,q} D^{1}_{p,q} \arrow{lu}{j} \arrow{ru}{\sigma}
\end{tikzcd} \label{exactcouple:hyperhomologyII}
\end{equation}
with $E^{1}_{p,q} = H_{p+q}(G, C_{\leq p}/C_{\leq p-1})$; $D^{1}_{p,q} = H_{p+q}(G, C_{\leq p})$; $A_{p+q} = H_{p+q}(G,C_{*})$; the maps $i, j ,k$ being the maps of the long exact sequence of homology groups associated to the short exact sequence of complexes 
\[
0 \rightarrow C_{\leq p-1} \rightarrow C_{\leq p} \rightarrow C_{\leq p}/C_{\leq p-1} \rightarrow 0,
\]
and $\sigma$ is induced by the inclusion.

To define the global action, it will be convenient to introduce the \textit{general} split orthogonal group, which is defined as follows.
\vspace{1ex}

\begin{definition}
For a ring $R$, define $GO_{n,n}(R) \subset GL_{2n}(R)$ as the subgroup 
\[
GO_{n,n}(R):= \{ A \in GL_{2n}(R) |  \,\, ^{t}A\psi_{2n}A = a\psi_{2n}, \,\,\text{for some} \,\, a \in R^{*} \}.
\] 
In the above notation, we will call $a \in R^{*}$ the \textit{associated unit} of $A$.
\end{definition}

For $n\geq 1$ we have short exact sequence of groups
\begin{align}
1 \rightarrow O_{n,n} \rightarrow GO_{n,n} \rightarrow R^{*} \rightarrow 1    \label{ses:GOnn}
\end{align}
where the first arrow is given by the inclusion and the second arrow maps $A\in GO_{n,n}$ to its associated unit. 
For instance, for $a \in R^{*}$, the matrix
\[
B_{a} := 
\begin{pmatrix}
1 \\
 &a \\
 && \ddots \\
 &&&1 \\
 &&&&a
\end{pmatrix}
\]
is in $GO_{n,n}$ and has associated unit $a$ which proves exactness at the right.

\begin{definition}[Global action]
The group homomorphism $GO_{n,n}(R) \to R^*$ makes $\Z[R^*]$ into a right $GO_{n,n}(R)$-module and left $R^*$-module, and both actions commute.
In particular, for any bounded below complex of $GO_{n,n}$-modules $M_{*}$, 
the groups 
$$\Tor^{GO_{n,n}}_{i}(\Z[R^*],M_{*}) = H_i(\mathbb{Z}[R^{*}]\otimes_{GO_{n,n}}^{\mathbb{L}}M_{*})$$
are left $\Z[R^*]$-modules functorial in $M_{*}$, and the spectral sequence
\begin{equation}
\label{eqn:RstarSpSeq}
E^1_{p,q}=\Tor^{GO_{n,n}}_{q}(\Z[R^*],M_p) \Rightarrow \Tor^{GO_{n,n}}_{p+q}(\Z[R^*],M_{*})
\end{equation}
is a spectral sequence of left $R^*$-modules. (For more information about the derived tensor product, see \cite{weibel1994introduction}). This spectral sequence is the spectral sequence of the exact couple (\ref{exactcouple:hyperhomologyII}) with 

$E^{1}_{p,q} = \Tor^{GO_{n,n}}_{p+q}(\Z[R^*], M_{\leq p}/M_{\leq p-1})$; $D^{1}_{p,q} = \Tor^{GO_{n,n}}_{p+q}(\Z[R^*], M_{\leq p})$; $A_{p+q} = \Tor^{GO_{n,n}}_{p+q}(\Z[R^*],M_{*})$.
For $n\geq 1$, the inclusions $\Z \subset \Z[R^*]:1 \mapsto 1$ and $O_{n,n} \subset GO_{n,n}$ yield isomorphisms 
$$\mathbb{Z}\otimes_{O_{n,n}}^{\mathbb{L}}M_{*} \stackrel{\sim}{\longrightarrow}
\mathbb{Z}[R^{*}]\otimes_{GO_{n,n}}^{\mathbb{L}}M_{*}$$
by Shapiro's Lemma.
For $M_{*}=C_{*}(n)$, this identifies the Spectral Sequence (\ref{eqn:RstarSpSeq}) with (\ref{ss:hyperhomologyII})  and makes the latter  into a spectral sequence of $R^*$-modules.
We use this structure to define the {\em global action} of $R^*$ on (\ref{ss:hyperhomologyII}).

\end{definition}

Specifically, we now show that, under the isomorphism (\ref{eqn:ShapiroTkOn}), the local actions corresponding to conjugation with $D_{a,k}$ are induced by the global action corresponding to multiplication with $a^{-2} \in R^{*}$. 
For this end, we will need to prove that the appropriate diagrams commute. 
We will use the following two lemmas.

\begin{lemma} \label{lemma:homologycommute}
Let $G$ and $K$ be groups.
Let $M$ and $N$ be a $G$-module and a $K$-module, respectively.
Consider the diagram of morphisms
\[
(G,M) \stackrel[(f_{2}, \varphi_{2})]{(f_{1}, \varphi_{1})}\rightrightarrows(K,N)
\]
where $f_{1}, f_{2}$ are group homomorphisms and $\varphi_{1}, \varphi_{2}$ $G$-module homomorphisms, $N$ is considered a $G$-module via $f_{1}$ and $f_{2}$ respectively. 
Suppose there exists a $\kappa \in K$ such that for all $g \in G$ and for all $m \in M$, 
%\begin{align*}
$$f_{2}(g) = \kappa f_{1}(g)\kappa^{-1} \hspace{4ex}\text{and}\hspace{4ex}
\varphi_{2}(m) = \kappa \varphi_{1}(m).$$
%\end{align*}
Then
\[
(f_{1}, \varphi_{1})_{*} = (f_{2}, \varphi_{2})_{*} : H_{*}(G,M) \rightarrow H_{*}(K,N).
\]
\end{lemma}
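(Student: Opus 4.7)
The plan is to reduce the lemma to the classical fact that inner automorphisms induce the identity map on group homology. Concretely, I will exhibit $(f_2, \varphi_2)$ as the composition of $(f_1, \varphi_1)$ with an auxiliary self-morphism $(c_\kappa, \ell_\kappa)$ of the pair $(K,N)$ determined by $\kappa$, and then invoke the standard result that this self-morphism induces the identity on $H_*(K,N)$.

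First I define the morphism of pairs $(c_\kappa, \ell_\kappa): (K,N) \to (K,N)$, where $c_\kappa(k) := \kappa k \kappa^{-1}$ is conjugation by $\kappa$ and $\ell_\kappa(n) := \kappa n$ is left multiplication. The equivariance condition $\ell_\kappa(k\cdot n) = c_\kappa(k)\cdot \ell_\kappa(n)$ holds automatically, since both sides equal $\kappa k n$. The hypothesis of the lemma then amounts to the equality of morphisms of pairs
\[
(f_2, \varphi_2) = (c_\kappa, \ell_\kappa) \circ (f_1, \varphi_1),
\]
because $c_\kappa \circ f_1 = f_2$ and $\ell_\kappa \circ \varphi_1 = \varphi_2$ by assumption. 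Passing to homology yields the factorisation
\[
(f_2, \varphi_2)_* = (c_\kappa, \ell_\kappa)_* \circ (f_1, \varphi_1)_* : H_*(G,M) \longrightarrow H_*(K,N).
\]

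It therefore suffices to show that $(c_\kappa, \ell_\kappa)_*$ is the identity on $H_*(K,N)$. This is the classical statement that inner automorphisms act trivially on group homology with their canonically twisted coefficients; see, for example, Brown, \emph{Cohomology of Groups}, Chapter II, Proposition 6.2. The standard proof realises the chain map induced by $(c_\kappa, \ell_\kappa)$ on the bar complex $B_*\otimes_{\mathbb{Z}[K]} N$ by the rule $[k_1|\cdots|k_n]\otimes n \mapsto [\kappa k_1 \kappa^{-1}|\cdots|\kappa k_n \kappa^{-1}]\otimes \kappa n$, and produces an explicit simplicial chain homotopy between this map and the identity.

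The main obstacle, if one insists on a self-contained argument, lies in writing down that explicit chain homotopy on the bar resolution; everything else in the argument is formal. Since the fact is standard and well-documented, the lemma reduces to a one-line verification once the factorisation $(f_2,\varphi_2) = (c_\kappa,\ell_\kappa) \circ (f_1,\varphi_1)$ is noticed.
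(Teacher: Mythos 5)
Your proof is correct and follows the same route as the paper: factor $(f_2,\varphi_2)$ through the self-morphism $(c_\kappa,\mu_\kappa)$ of $(K,N)$ and invoke the standard fact from Brown that it induces the identity on $H_*(K,N)$. The only cosmetic difference is the citation (Brown, Ch.~II, Prop.~6.2 versus the paper's Ch.~III.8).
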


\begin{proof}
By assumption, we have the following commutative diagram:
\[
\begin{tikzcd}
H_{*}(G,M)\arrow[swap]{d}{(f_{1}, \varphi_{1})_{*}} \arrow{dr}{(f_{2}, \varphi_{2})_{*}} \\
H_{*}(K,N) \arrow[swap]{r}{(c_{\kappa}, \mu_{\kappa})_{*}} & H_{*}(K,N),
\end{tikzcd}
\]
where $(c_{\kappa}, \mu_{\kappa}) : (K,N) \rightarrow (K,N)$ is the map $(k,n) \mapsto (\kappa k \kappa^{-1}, \kappa n)$.
By \cite[Chapter III.8]{brown1982cohomology}, the bottom horizontal map equals the identity. 
\end{proof}

We will also need to recall functoriality of Tor. This is given by the following lemma.

\begin{lemma} \label{lemma:torcommute}
Let $G$ and $K$ be groups; let $M$ and $P$ be a right $G$-module and right $K$-module respectively; and let $N$ and $Q$ be a left $G$-module and left $K$-module respectively. 

Consider the diagram of morphisms
\[
(M,G,N) \stackrel[(f_{2}, \varphi_{2}, g_{2})]{(f_{1}, \varphi_{1},g_{1})}\rightrightarrows(P,K,Q)
\]
where $\varphi_{1}, \varphi_{2}$ are group homomorphisms; $f_{1}, f_{2}$ right $G$-module homomorphisms where $P$ is considered a right $G$-module via $\varphi_{1}$ and $\varphi_{2}$ respectively and  $g_{1}, g_{2}$ left $G$-module homomorphisms where $Q$ is considered a left $G$-module via $\varphi_{1}$ and $\varphi_{2}$ respectively.

Suppose there exists a $\kappa \in K$ such that for all $g \in G$; for all $m \in M$ and for all $n \in N$,
$$f_{2}(m) = f_{1}(m)\kappa^{-1} \hspace{4ex},\hspace{4ex}
\varphi_{2}(g) = \kappa \varphi_{1}(g) \kappa^{-1} \hspace{4ex}\text{and}\hspace{4ex} 
g_{2}(m) = \kappa g_{1}(n).$$
Then
\[
(f_{1}, \varphi_{1}, g_{1})_{*} = (f_{2}, \varphi_{2},g_{2})_{*} : \Tor_{*}^{G}(M,N) \rightarrow \Tor_{*}^{K}(P,Q).
\]
\end{lemma}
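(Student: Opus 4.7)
The plan is to mimic the proof strategy of Lemma \ref{lemma:homologycommute}: factor the difference between the two maps through a self-map of $(P, K, Q)$ associated with the element $\kappa$, and then show that this self-map acts as the identity on $\Tor^{K}_{*}(P, Q)$. In effect this proves the Tor-analogue of the fact (Brown III.8) that inner automorphisms of a group act trivially on its homology.

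Concretely, I introduce the triple $(\rho_{\kappa}, c_{\kappa}, \lambda_{\kappa}) : (P, K, Q) \to (P, K, Q)$ defined by $\rho_{\kappa}(p) := p\kappa^{-1}$, $c_{\kappa}(k) := \kappa k \kappa^{-1}$, and $\lambda_{\kappa}(q) := \kappa q$. A direct calculation shows this is a morphism of triples in the sense of the lemma: for example, $\rho_{\kappa}(pk) = pk\kappa^{-1} = (p\kappa^{-1})(\kappa k \kappa^{-1}) = \rho_{\kappa}(p)\cdot c_{\kappa}(k)$, and similarly $\lambda_{\kappa}(kq) = c_{\kappa}(k)\cdot\lambda_{\kappa}(q)$. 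The three hypotheses of the lemma then translate cleanly into the identity
\[
(f_{2}, \varphi_{2}, g_{2}) \;=\; (\rho_{\kappa}, c_{\kappa}, \lambda_{\kappa}) \circ (f_{1}, \varphi_{1}, g_{1})
\]
of morphisms $(M, G, N) \to (P, K, Q)$, so by functoriality of Tor it suffices to show that $(\rho_{\kappa}, c_{\kappa}, \lambda_{\kappa})_{*}$ is the identity on $\Tor^{K}_{*}(P, Q)$.

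To see this, I would compute Tor using a projective resolution $F_{*} \to Q$ of left $\Z[K]$-modules, so that $\Tor^{K}_{*}(P, Q) = H_{*}(P \otimes_{\Z[K]} F_{*})$. Left multiplication by $\kappa$ lifts to a chain map $\tilde{\lambda}_{\kappa} : F_{*} \to F_{*}$ which is not strictly $K$-equivariant, but it intertwines the $K$-action with its $c_{\kappa}$-twist, that is, $\tilde{\lambda}_{\kappa}(k \cdot f) = c_{\kappa}(k)\cdot\tilde{\lambda}_{\kappa}(f)$. This is exactly the compatibility needed for $\rho_{\kappa} \otimes \tilde{\lambda}_{\kappa}$ to descend to a self-map of $P \otimes_{\Z[K]} F_{*}$ representing $(\rho_{\kappa}, c_{\kappa}, \lambda_{\kappa})_{*}$. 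But over $\Z[K]$,
\[
(p\kappa^{-1}) \otimes (\kappa f) \;=\; p\kappa^{-1}\kappa \otimes f \;=\; p \otimes f,
\]
so this chain map is literally the identity, and the claim follows.

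The main subtlety is bookkeeping the twisted module structures: the morphism $(\rho_{\kappa}, c_{\kappa}, \lambda_{\kappa})$ genuinely re-labels the $K$-actions on $P$ and $Q$ via $c_{\kappa}$, so one must verify that all three semi-linearity conditions for the composite morphism remain consistent before appealing to the functoriality of Tor. Once this verification is done, the entire proof reduces to the single tensor-product computation above.
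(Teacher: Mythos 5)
Your proof is correct and follows essentially the same strategy as the paper's: factor $(f_2,\varphi_2,g_2)$ through the self-morphism $(\rho_\kappa, c_\kappa, \lambda_\kappa)$ of $(P,K,Q)$ (the paper calls it $(\lambda_{\kappa^{-1}}, c_\kappa, \mu_\kappa)$), then observe that this self-morphism induces the identity on $\Tor^K_*(P,Q)$ because $p\kappa^{-1}\otimes\kappa f = p\otimes f$. You supply more detail than the paper --- explicitly realising the induced map via a $c_\kappa$-semilinear lift of $\kappa$-multiplication on a projective resolution of $Q$ --- but the argument and the decisive cancellation are identical.
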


\begin{proof}
By assumption, the following diagram commutes:
\[
\begin{tikzcd}
H_{*}(M \otimes^{\mathbb{L}}_{G}N) \arrow[swap]{d}{(f_{1}, \varphi_{1}, g_{1})_{*}} \arrow{drr}{(f_{2}, \varphi_{2}, g_{2})_{*}} \\
H_{*}(P \otimes^{\mathbb{L}}_{K}Q) \arrow[swap]{rr}{(\lambda_{\kappa^{-1}},c_{\kappa}, \mu_{\kappa})_{*}} & &H_{*}(P \otimes^{\mathbb{L}}_{K}Q),
\end{tikzcd}
\]
where 
$$\lambda_{\kappa^{-1}} : P \rightarrow P, p \mapsto p\kappa^{-1}, \quad
c_{\kappa} :K \rightarrow K, k \mapsto \kappa k \kappa^{-1},\hspace{4ex} 
\mu_{\kappa} : Q \rightarrow Q, q \mapsto \kappa q.$$
Therefore, the bottom horizontal map is induced by the map 
\begin{align*}
&P\otimes_{G}Q \rightarrow P\otimes_{G}Q \\
&p\otimes q \mapsto p\kappa^{-1}\otimes \kappa q = p\otimes q
\end{align*}
which is the identity. 
\end{proof}
By definition the $R^{*}$-action on $\Tor_{q}^{GO_{n,n}}(\mathbb{Z}[R^{*}], C_{k}(n))$ corresponding to left multiplication with $a \in R^{*}$ is induced by the map 
\[
(\mathbb{Z}[R^{*}], GO_{n,n}, C_{k}(n)) \xrightarrow{(a,id,id)} (\mathbb{Z}[R^{*}], GO_{n,n}, C_{k}(n)) 
\]
where $a : \mathbb{Z}[R^{*}] \rightarrow \mathbb{Z}[R^{*}] $ is the map corresponding to left multiplication with $a$. 
With this, we have the following proposition, which gives us a model of this action in terms of the groups $\Tor_{q}^{O_{n,n}}(\mathbb{Z}, C_{k}(n)) \cong H_{q}(O_{n,n}, C_{k}(n))$.

\begin{proposition}
\label{prop:ActionaBa}
Let $k,q \geq 0$ and $n\geq 1$. Then, for all $a \in R^{*}$, the following diagram commutes:
\[
\begin{tikzcd}
\Tor_{q}^{GO_{n,n}}(\mathbb{Z}[R^{*}], C_{k}(n)) \arrow{r}{(a,id,id)_{*}}
  & \Tor_{q}^{GO_{n,n}}(\mathbb{Z}[R^{*}], C_{k}(n))\\
\Tor_{q}^{O_{n,n}}(\mathbb{Z}, C_{k}(n)) \arrow{u}{(i,i,id)_{*}}[swap]{\cong} \arrow[swap]{r}{({id,C_{B_{a}}, B_{a})_{*}}}
  & \Tor_{q}^{O_{n,n}}(\mathbb{Z}, C_{k}(n)) \arrow[swap]{u}{(i,i,id)_{*}}[swap]{\cong},
\end{tikzcd}
\]
where the vertical maps are the isomorphisms given by Shapiro's Lemma; $B_{a}$ denotes left multiplication by $B_{a} \in GO_{n,n}$ and $C_{B_{a}}$ denotes the map induced by conjugation with the element $B_{a}$ on $O_{n,n}$. 
\end{proposition}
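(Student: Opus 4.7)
The plan is to deduce the commutativity of the square from Lemma~\ref{lemma:torcommute} applied to a single well-chosen intertwining element, namely $\kappa = B_a \in GO_{n,n}$, whose defining feature is that its associated unit in the short exact sequence (\ref{ses:GOnn}) is exactly $a$.

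First I would unwind the two composite morphisms of triples $(M,G,N) \to (P,K,Q)$ that need to be compared. Going up then right yields
\[
(a,\mathrm{id},\mathrm{id})\circ(i,i,\mathrm{id}) \;=\; (a\circ i,\; i,\; \mathrm{id})\;:\;(\mathbb Z, O_{n,n}, C_k(n)) \longrightarrow (\mathbb Z[R^*], GO_{n,n}, C_k(n)),
\]
where $a\circ i:\mathbb Z\to\mathbb Z[R^*]$ sends $1\mapsto\langle a\rangle$. Going right then up yields
\[
(i,i,\mathrm{id})\circ(\mathrm{id},C_{B_a},B_a) \;=\; (i,\; i\circ C_{B_a},\; B_a)\;:\;(\mathbb Z, O_{n,n}, C_k(n)) \longrightarrow (\mathbb Z[R^*], GO_{n,n}, C_k(n)),
\]
where $(i\circ C_{B_a})(g) = B_a g B_a^{-1}$ (this lies in $O_{n,n}\subset GO_{n,n}$ because $B_a$ normalises $O_{n,n}$) and $B_a$ acts on $C_k(n)$ by its usual left action as a matrix in $GL_{2n}(R)$.

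Next I would verify the three hypotheses of Lemma~\ref{lemma:torcommute} with the above data and $\kappa = B_a$. Writing the right $GO_{n,n}$-action on $\mathbb Z[R^*]$ as right multiplication along $GO_{n,n}\twoheadrightarrow R^*$, the module condition becomes, for $m\in\mathbb Z$,
\[
i(m) \;=\; (a\circ i)(m)\cdot B_a^{-1} \;=\; \langle a\rangle m\cdot\langle a^{-1}\rangle \;=\; m \quad\text{in }\mathbb Z[R^*],
\]
using that $B_a$ maps to $a\in R^*$. The group condition reads $i\circ C_{B_a}(g) = B_a\, i(g)\, B_a^{-1}$, which is the definition of $C_{B_a}$. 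The final module condition $B_a(n) = B_a\cdot \mathrm{id}(n)$ is tautological. All three conditions hold, so Lemma~\ref{lemma:torcommute} yields the equality of induced maps on Tor, which is precisely commutativity of the square.

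The only real subtlety is recognising that the global action of $a$ on $\mathbb Z[R^*]$ must be transported across Shapiro's isomorphism by an element of $GO_{n,n}$ whose associated unit is $a$, and that $B_a$ does this job while simultaneously acting on $C_k(n)$ and conjugating $O_{n,n}$ in a compatible manner; everything else is an application of the general functoriality lemma. I do not anticipate a serious obstacle beyond tracking conventions (left versus right actions on $\mathbb Z[R^*]$, and the placement of $B_a^{\pm 1}$).
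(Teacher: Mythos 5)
Your proof is correct and follows essentially the same route as the paper: both invoke Lemma~\ref{lemma:torcommute} with the intertwining element $\kappa = B_a \in GO_{n,n}$, unwind the two composites of triple-morphisms into $(\mu_a\circ i,\, i,\, \mathrm{id})$ and $(i,\, i\circ C_{B_a},\, \mu_{B_a})$, and verify the three compatibility conditions using that $B_a$ maps to $a$ under $GO_{n,n}\to R^*$. There is no difference of substance.
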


\begin{proof}
We use Lemma \ref{lemma:torcommute}. Specifically, consider the diagram 
\[
(\mathbb{Z},O_{n,n},C_{k}(n)) \stackrel[(f_{2}, \varphi_{2}, g_{2})]{(f_{1}, \varphi_{1},g_{1})}\rightrightarrows(\mathbb{Z}[R^{*}],GO_{n,n},C_{k}(n))
\]
where $(f_{1}, \varphi_{1},g_{1}) := (\mu_{a}, i, 1)$ and $(f_{2}, \varphi_{2},g_{2}) := (i, C_{B_{a}}, \mu_{B_{a}})$. Here, $\mu_{a}$ is defined via $\mu_{a}(1) := a$ and $\mu_{B_{a}}$ is defined via left multiplication on basis elements by $B_{a}$.
Let $\kappa:= B_{a} \in GO_{n,n}$. Note that from short exact sequence  (\ref{ses:GOnn}), we deduce $B_{a}$ acts on $R^{*}$ by multiplication with $a$. Thus, $i(1) = 1 = \mu_{a}(1)\kappa^{-1}$.
Furthermore, $C_{B_{a}} = \kappa i \kappa^{-1}$ and for every $(v_{1}, \dots, v_{k}) \in \mathcal{IU}_{k}(R^{2n})$, $\mu_{B_{a}}(v_{1}, \dots, v_{k}) = \kappa (v_{1}, \dots, v_{k})$ (the case $k=0$ being trivial). Thus, by Lemma \ref{lemma:torcommute}, the diagram commutes. 
\end{proof}

Next, note that the action on $\Tor_{q}^{O_{n,n}}(\mathbb{Z}, C_{k}(n))$ induced by 
\[
(\mathbb{Z}, O_{n,n}, C_{k}(n)) \xrightarrow{(id,C_{B_{a}},B_{a})} (\mathbb{Z}, O_{n,n}, C_{k}(n)) 
\]
is equivalent to the action on $H_{q}(O_{n,n}, C_{k}(n))$ induced by 
\[
(O_{n,n}, C_{k}(n)) \xrightarrow{(C_{B_{a}},B_{a})} (O_{n,n}, C_{k}(n)).
\]
To make the connection with the action induced by conjugation with $D_{a,k}$, we prove the following intermediate proposition. 

\begin{proposition}
\label{prop:ActionBa2Phia}
Let $k,q \geq 0$ and $n\geq 1$. Then, for all $a \in R^{*}$, the following diagram commutes:
\[
\begin{tikzcd}
H_{q}(O_{n,n}, C_{k}(n)) \arrow{rr}{(C_{B_{a^{-2}}},B_{a^{-2}})_{*}}
  && H_{q}(O_{n,n}, C_{k}(n)) \\
H_{q}(O_{n,n},C_{k}(n)) \arrow{u}{id} \arrow[swap]{rr}{(id, \phi_{a})_{*}}
  && H_{q}(O_{n,n},C_{k}(n)) \arrow[swap]{u}{id},
\end{tikzcd}
\]
where for $a \in R^{*}$, the map 
\[
(id, \phi_{a}): (O_{n,n}, C_{k}(n)) \rightarrow (O_{n,n}, C_{k}(n))
\]
is defined to be the identity on $O_{n,n}$ and on basis elements of $C_{k}(n)$ as 
\[
\phi_{a} : (v_{1}, \dots , v_{k}) \mapsto (a^{-1}v_{1}, \dots, a^{-1}v_{k}).
\]
\end{proposition}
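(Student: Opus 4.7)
The plan is to apply Lemma \ref{lemma:homologycommute} with conjugating element $\kappa = D_{a,n} \in O_{n,n}(R)$. Everything hinges on the bookkeeping identity
\[
B_{a^{-2}} = a^{-1}\, D_{a,n}
\]
in $M_{2n}(R)$, which one reads off by comparing diagonal entries position by position (odd positions: $1 = a^{-1}\cdot a$; even positions: $a^{-2} = a^{-1}\cdot a^{-1}$). Although $B_{a^{-2}}$ itself only lies in $GO_{n,n}$, the rescaling $D_{a,n}$ is an element of $O_{n,n}$, so it is an admissible choice of $\kappa$ in the target group $K = O_{n,n}$ required by the lemma.

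Given this identity, I will set $(f_1, \varphi_1) = (\mathrm{id}, \phi_a)$ and $(f_2, \varphi_2) = (C_{B_{a^{-2}}}, B_{a^{-2}})$ and verify the two hypotheses of Lemma \ref{lemma:homologycommute}. For the group-theoretic part, the central scalar $a^{-1}$ cancels in the conjugation, so $B_{a^{-2}}\, g\, B_{a^{-2}}^{-1} = D_{a,n}\, g\, D_{a,n}^{-1}$ for every $g \in O_{n,n}$, which reads $f_2(g) = \kappa\, f_1(g)\, \kappa^{-1}$. For the module-theoretic part, applying the identity coordinate by coordinate on a basis element of $C_k(n)$ gives
\[
B_{a^{-2}}(v_1, \ldots, v_k) = \bigl(a^{-1} D_{a,n} v_1, \ldots, a^{-1} D_{a,n} v_k\bigr) = D_{a,n} \cdot \phi_a(v_1, \ldots, v_k),
\]
so $\varphi_2(m) = \kappa\, \varphi_1(m)$ on basis elements and hence on all of $C_k(n)$. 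Lemma \ref{lemma:homologycommute} then immediately yields the desired commutativity.

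The only real content is spotting the scalar identity $B_{a^{-2}} = a^{-1} D_{a,n}$; once it is in hand, the proof is essentially automatic and there is no genuine obstacle. Morally, the exponent $-2$ chosen in the global action is precisely what is needed to make the scalar discrepancy between $B_{a^{-2}}$ and $D_{a,n}$ a perfect square, so that the compensating rescaling lands inside the orthogonal group $O_{n,n}$ rather than only in the similitude group $GO_{n,n}$, which is exactly the input that Lemma \ref{lemma:homologycommute} requires.
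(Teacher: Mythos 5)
Your proof is correct and follows essentially the same route as the paper: both proofs apply Lemma \ref{lemma:homologycommute} with $\kappa = D_{a,n}$, both hinge on the factorization $B_{a^{-2}} = D_{a,n}\,\underline{a}^{-1}$ (your $a^{-1}D_{a,n}$), and both observe that the central scalar cancels in the conjugation and is absorbed by $\phi_a$ in the module action. Your closing remark explaining \emph{why} the exponent $-2$ is what makes the compensating factor land in $O_{n,n}$ rather than only in $GO_{n,n}$ is a nice conceptual gloss, but the argument itself is the same.
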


\begin{proof}
We use Lemma \ref{lemma:homologycommute}. Specifically, consider the diagram 
\[
(O_{n,n},C_{k}(n)) \stackrel[(f_{2}, \varphi_{2})]{(f_{1}, \varphi_{1})}\rightrightarrows(O_{n,n},C_{k}(n))
\]
where $(f_{1}, \varphi_{1}) := (id, \phi_{a})$ and $(f_{2}, \varphi_{2}) := (C_{B_{a^{-2}}}, B_{a^{-2}})$. 
Define
\[
\kappa := D_{a,n} =
 \begin{pmatrix}
a \\
& a^{-1}\\
&&\ddots \\
&&& a \\
&&&& a^{-1}
\end{pmatrix}.
\]
Denoting for $a \in R^{*}$, 
\[
\underline{a} :=  \begin{pmatrix}
a \\
& a\\
&&\ddots \\
&&& a \\
&&&& a
\end{pmatrix},
\]
note that $B_{a^{-2}} = \kappa \underline{a}^{-1}$, so that $C_{B_{a^{-2}}} = C_{\kappa}C_{\underline{a}^{-1}}$. But, $C_{\underline{a}^{-1}} = id$, so that $C_{B_{a^{-2}}} = C_{\kappa}$. 
Furthermore, note that for every $(v_{1}, \dots, v_{k}) \in \mathcal{IU}_{k}(R^{2n})$, $B_{a^{-2}}(v_{1}, \dots, v_{k}) = \kappa \phi_{a} (v_{1}, \dots, v_{k})$, since $B_{a^{-2}} = \kappa \underline{a}^{-1}$. Thus, by Lemma \ref{lemma:homologycommute}, the diagram commutes. 
\end{proof}

Finally, we show that $(id, \phi_{a})$ induces the desired local actions.
\begin{proposition}
\label{prop:ActionDaPhia}
Let $k,q \geq 0$. Then, for all $a \in R^{*}$, the following diagram commutes:
\[
\begin{tikzcd}
H_{q}(O_{n,n}, C_{k}(n)) \arrow{r}{(id,\phi_{a})_{*}}
  & H_{q}(O_{n,n}, C_{k}(n)) \\
H_{q}(T_{k},\mathbb{Z}) \arrow{u}{(i,(e_{1}, \dots, e_{k}))_{*}}[swap]{\cong} \arrow[swap]{r}{C_{D_{a,k}}}
  & H_{q}(T_{k},\mathbb{Z}) \arrow[swap]{u}{(i,(e_{1}, \dots, e_{k}))_{*}}[swap]{\cong},
\end{tikzcd}
\]
where the vertical arrows are the isomorphisms given by Shapiro's Lemma and the map $C_{D_{a,k}}$ denotes the map induced by conjugation with the element $D_{a,k}$ on the stabiliser $T_{k}$.
\end{proposition}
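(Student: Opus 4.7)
The plan is to apply Lemma~\ref{lemma:homologycommute} directly, with the element $\kappa := D_{a,k} \in O_{n,n}(R)$ itself serving as the conjugating element. The first step is to unwind what the two compositions in the diagram are, as morphisms of pairs $(T_k,\Z) \to (O_{n,n},C_k(n))$. Going up-then-across gives $(f_1,\varphi_1) := (i,\, \phi_a\circ(e_1,\dots,e_k))$, where $i:T_k\hookrightarrow O_{n,n}$ is the inclusion and $\varphi_1:\Z\to C_k(n)$ is the homomorphism $1\mapsto (a^{-1}e_1,\dots,a^{-1}e_k)$. Going across-then-up gives $(f_2,\varphi_2) := (i\circ C_{D_{a,k}},\, (e_1,\dots,e_k))$, where $\varphi_2:\Z\to C_k(n)$ sends $1$ to $(e_1,\dots,e_k)$.

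Next I would verify the two hypotheses of Lemma~\ref{lemma:homologycommute} for this choice of $\kappa$. The relation $f_2(t) = D_{a,k}\, f_1(t)\, D_{a,k}^{-1}$ for every $t\in T_k$ is literally the definition of $C_{D_{a,k}}$. The relation $\varphi_2(1) = D_{a,k}\cdot \varphi_1(1)$ reduces to the one-line calculation $D_{a,k}\cdot(a^{-1}e_1,\dots,a^{-1}e_k) = (a^{-1}D_{a,k}e_1,\dots,a^{-1}D_{a,k}e_k) = (e_1,\dots,e_k)$, using the defining property that $D_{a,k}e_i = a\, e_i$ for $1\le i\le k$. With both hypotheses satisfied, Lemma~\ref{lemma:homologycommute} yields $(f_1,\varphi_1)_* = (f_2,\varphi_2)_*$, which is exactly the commutativity of the square.

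There is no real obstacle: the content of the proposition is simply that the matrix $D_{a,k}$ implementing the local conjugation action is precisely what witnesses, via Shapiro, the equality between the Shapiro transport of that local action and the scaling map $\phi_a$. The only points requiring attention are matching the labelling of the two paths around the square with the roles of $(f_1,\varphi_1)$ versus $(f_2,\varphi_2)$ in the lemma, and keeping track that the factor $a^{-1}$ built into $\phi_a$ cancels exactly against the factor $a$ that $D_{a,k}$ produces on each $e_i$ with $1\le i\le k$.
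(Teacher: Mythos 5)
Your proposal is correct and matches the paper's proof essentially verbatim: both apply Lemma~\ref{lemma:homologycommute} with $\kappa = D_{a,k}$, identify the two paths around the square as $(f_1,\varphi_1)=(i,\,1\mapsto(a^{-1}e_1,\dots,a^{-1}e_k))$ and $(f_2,\varphi_2)=(i\circ C_{D_{a,k}},\,1\mapsto(e_1,\dots,e_k))$, and verify the two hypotheses via the same one-line computations, using $D_{a,k}e_i = a\,e_i$ for $1\le i\le k$.
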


\begin{proof}
We use Lemma \ref{lemma:homologycommute}. Specifically, we have to consider the diagram 
\[
(T_{k},\mathbb{Z}) \stackrel[(f_{2}, \varphi_{2})]{(f_{1}, \varphi_{1})}\rightrightarrows(O_{n,n},C_{k}(n))
\]
where $(f_{1}, \varphi_{1}) := (i, (a^{-1}e_{1}, \dots, a^{-1}e_{k}))$ and $(f_{2}, \varphi_{2}) := (iC_{D_{a,k}}, (e_{1}, \dots, e_{k}))$,
and $i:T_k \to O_{n,n}$ is the natural inclusion of groups.
Let $\kappa = D_{a,k} \in O_{n,n}$. 
Then, for every $A \in T_{k}$, 
$$
f_{2}(A) = iC_{D_{a,k}}(A) 
= D_{a,k}A D_{a,k}^{-1} 
= D_{a,k}i(A) D_{a,k}^{-1} 
= \kappa f_{1}(A) \kappa^{-1}
$$
and
\[
(e_{1}. \dots, e_{k}) = D_{a,k}(a^{-1}e_{1}, \dots, a^{-1}e_{k}) = \kappa (a^{-1}e_{1}, \dots, a^{-1}e_{k}).
\]
By Lemma \ref{lemma:homologycommute}, the diagram commutes.  
\end{proof}

Thus, we have shown that there exists an $R^{*}$-action on the spectral sequence 
\begin{align*}
E^{1}_{p,q}(n) &= H_{q}(O_{n,n}, C_{p}(n)) \Rightarrow H_{p+q}(O_{n,n}, C_{*}(n))
\end{align*}
which induces the desired local actions considered previously. 
Using Corollary \ref{corollary:stabiso}, we obtain the following.

\begin{corollary}
For every $m \geq 1$, the localised spectral sequence 
\begin{align}
{}_{m}E^{1}_{p,q}(n) = s_{m}^{-1}E^{1}_{p,q}(n) \Rightarrow s_{m}^{-1}H_{p+q}(O_{n,n}, C_{*}(n)) \label{ss:localisedII}
\end{align}
has $_{m}E^{1}_{p,q}$ terms 
\[
{}_{m}E^{1}_{p,q} = s_{m}^{-1}H_{q}(O_{n,n}, C_{p}(n)) \cong H_{q}(O_{n-p, n-p})
\]
for all $q < m/2$ and for all $p \leq n$. 
\end{corollary}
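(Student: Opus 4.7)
The plan is to derive the corollary from the three ingredients already developed: the exactness of localization, Shapiro's lemma \eqref{eqn:ShapiroTkOn}, and Corollary \ref{corollary:stabiso}. The final claim is essentially the combination of these, once we check that the global $R^*$-action on $H_q(O_{n,n},C_p(n))$ matches, under Shapiro, the local action on $H_q(T_p)$ at which Corollary \ref{corollary:stabiso} localizes.

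First, I would observe that localization of $\Z[R^*]$-modules at $s_m$ is exact. Since the spectral sequence \eqref{ss:hyperhomologyII} arises from the exact couple \eqref{exactcouple:hyperhomologyII}, which is by construction an exact couple of $\Z[R^*]$-modules under the global action, applying $s_m^{-1}(-)$ term-wise yields a new exact couple whose associated spectral sequence is \eqref{ss:localisedII} with the stated abutment $s_m^{-1}H_{p+q}(O_{n,n},C_*(n))$. This settles the convergence part of the corollary with essentially no work.

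Next, I would identify the $E^1$-terms. For $p\leq n$ the action of $O_{n,n}$ on $\mathcal{IU}_p(R^{2n})$ is transitive (Section 2.3), so Shapiro's lemma gives the isomorphism $H_q(T_p)\xrightarrow{\cong} H_q(O_{n,n},C_p(n))$ of \eqref{eqn:ShapiroTkOn}. The chain of propositions \ref{prop:ActionaBa}, \ref{prop:ActionBa2Phia} and \ref{prop:ActionDaPhia} shows that, under this isomorphism, the global action of $a^{-2}\in R^*$ on $H_q(O_{n,n},C_p(n))$ is intertwined with the local action $C_{D_{a,p}}$ on $H_q(T_p)$. Consequently, localizing $H_q(O_{n,n},C_p(n))$ at the global $s_m$ coincides with localizing $H_q(T_p)$ at the image of $s_m$ under the local action.

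The final step is then just Corollary \ref{corollary:stabiso}: for $q<m/2$, the inclusion $O_{n-p,n-p}\hookrightarrow T_p$ induces an isomorphism $H_q(O_{n-p,n-p})\xrightarrow{\cong} s_m^{-1}H_q(T_p)$. Combining this with the identification of the previous paragraph gives $s_m^{-1}E^1_{p,q}(n)\cong H_q(O_{n-p,n-p})$ for $q<m/2$ and $p\leq n$, as claimed.

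The only delicate point is the second paragraph: making sure that the global $s_m$ acts on $H_q(T_p)$ exactly as an element which kills the higher homology of $L_p$ in the Hochschild--Serre spectral sequence \eqref{ss:localising}. This follows from the fact that the local action on $L_p$ used in the proof of Lemma \ref{lemma:kill} is the restriction of conjugation by $D_{a,p}$, and the three propositions above identify precisely this local action with the global action of $a^{-2}$; choosing the $S(m)$-sequence inside $R^*$ (possible since the residue field is infinite) ensures that the resulting $s_m$ has the required annihilation property on both sides simultaneously.
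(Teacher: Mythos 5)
Your overall route matches the paper's intent (Shapiro's lemma to pass from $E^{1}_{p,q}(n)$ to $H_{q}(T_{p})$, then Corollary~\ref{corollary:stabiso}), and the observation that localisation is exact so that the exact couple localises term-wise, giving the stated abutment, is correct. The problem is in your second and final paragraphs. Propositions~\ref{prop:ActionaBa}--\ref{prop:ActionDaPhia} identify the global action of $a^{-2}$ with the local action $C_{D_{a,p}}$; they do \emph{not} give a formula for the global action of an arbitrary $a$. Therefore the global action of $s_{m}=-\sum_{I}(-1)^{|I|}\langle a_{I}\rangle$ on $H_{q}(T_{p})$ is $-\sum_{I}(-1)^{|I|}(C_{B_{a_{I}}})_{*}$, which is not the same endomorphism as the local $s_{m}$-action $-\sum_{I}(-1)^{|I|}(C_{D_{a_{I},p}})_{*}$ appearing in Lemma~\ref{lemma:kill} and Corollary~\ref{corollary:stabiso}. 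These coincide only when each $a_{I}$ is a square, and choosing the $S(m)$-sequence cleverly does not repair this, since the partial sums $a_{I}$ of a sequence of squares need not be squares. So your ``consequently,'' and the claim at the end that $s_{m}$ has ``the required annihilation property on both sides simultaneously,'' do not follow from what you have stated.

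The fix, which is what the paper means in the proof of Proposition~\ref{prop:zerodifferentials} by ``localising with respect the last action,'' is to take $s_{m}^{-1}$ throughout to mean localisation with respect to the $\mathbb{Z}[R^{*}]$-module structure obtained from the global one by restriction of scalars along the group homomorphism $R^{*}\to R^{*}$, $a\mapsto a^{-2}$. This restricted structure is defined on the whole exact couple (\ref{exactcouple:hyperhomologyII}), so it still yields a spectral sequence of $R^{*}$-modules; and under it, $a$ acts on $E^{1}_{p,q}(n)\cong H_{q}(T_{p})$ precisely by $(C_{D_{a,p}})_{*}$, so that the $s_{m}$-localisation is literally the one in Corollary~\ref{corollary:stabiso}, giving ${}_{m}E^{1}_{p,q}\cong H_{q}(O_{n-p,n-p})$ for $q<m/2$, $p\leq n$. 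With that amendment your argument goes through and coincides with the paper's (implicit) proof.
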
\qed

Under these identifications, the differentials $d^{1}:  \,\,_{m}E^{1}_{p,q} \rightarrow \,\, _{m}E^{1}_{p-1,q} $ take the form 
\[
d^{1}: H_{q}(O_{n-p, n-p}) \rightarrow H_{q}(O_{n-p+1, n-p+1})
\]
whenever $q < m/2$ and $p \leq n$. 
Our next task is to compute these differentials.

\subsection{Computation of the localised $d^{1}$ differentials, and proof of homological stability}

\begin{proposition} \label{prop:computelocalisedd1}
For all $q < m/2$ and $p \leq n$, 
the homomorphism $d^{1}_{p,q}: H_{q}(O_{n-p, n-p}) \rightarrow H_{q}(O_{n-p+1, n-p+1})$ is 
\[
d^1_{p,q}=
\begin{cases}
0, & p \,\, \text{even} \\
i_{*}, & p \,\, \text{odd},
\end{cases}
\]
where $i : O_{n-p, n-p} \hookrightarrow O_{n-p+1, n-p+1}$ denotes the inclusion. 
\end{proposition}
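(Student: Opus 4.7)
The plan is to unwind the Shapiro identifications explicitly and show that every face map $(d_i)_*$, for $i=1,\ldots,p$, induces the same map $i_*:H_q(O_{n-p,n-p})\to H_q(O_{n-p+1,n-p+1})$ after localization, so that the alternating sum collapses to the stated answer.

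First, recall that the $E^1$ differential of the hyperhomology spectral sequence (\ref{ss:secondhyperhomology}) is induced by the chain-level differential $d=\sum_{i=1}^{p}(-1)^{i+1}d_i$ where $d_i$ deletes the $i$-th vector. Hence
\[
d^{1}_{p,q}=\sum_{i=1}^{p}(-1)^{i+1}(d_i)_*,\qquad (d_i)_*:H_q(O_{n,n},C_p(n))\longrightarrow H_q(O_{n,n},C_{p-1}(n)).
\]
Since the global $R^*$-action on (\ref{ss:hyperhomologyII}) was constructed so as to commute with the differentials, localizing at $s_m$ preserves this formula; so it suffices to compute each $(d_i)_*$ under the localized Shapiro identification of Corollary~\ref{corollary:stabiso}.

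Next, I would produce, for every $1\le i\le p$, an explicit element $g_i\in O_{n,n}$ with $g_i\cdot(e_1,\ldots,e_{p-1})=(e_1,\ldots,\widehat{e_i},\ldots,e_p)$. For $i=p$ take $g_p=\mathrm{id}$; for $i<p$ take the orthogonal block-permutation that cyclically permutes the hyperbolic planes indexed by $i,i+1,\ldots,p$, acting trivially on all other $\psi_2$-blocks. Under Shapiro's lemma, $(d_i)_*$ is then identified with the composition
\[
H_q(T_p)\xrightarrow{\;j\;}H_q\bigl(\mathrm{St}(e_1,\ldots,\widehat{e_i},\ldots,e_p)\bigr)\xrightarrow{\;C_{g_i^{-1}}\;}H_q(T_{p-1}),
\]
where $j$ is induced by the inclusion $T_p\subseteq\mathrm{St}(e_1,\ldots,\widehat{e_i},\ldots,e_p)$ and $C_{g_i^{-1}}$ by conjugation.

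The key step is to check that after composing with the splittings $O_{n-p,n-p}\hookrightarrow T_p$ and the retractions $\rho:T_{p-1}\twoheadrightarrow O_{n-p+1,n-p+1}$ from (\ref{s.e.s:localising}), each of these becomes the standard inclusion $i:O_{n-p,n-p}\hookrightarrow O_{n-p+1,n-p+1}$. This follows from the observation that $g_i$ has been chosen to act as the identity on the last $2(n-p)$ coordinates, while elements $A$ of $O_{n-p,n-p}\subset T_p$ act as the identity on the first $2p$ coordinates; hence $g_i^{-1}\iota_p(A)g_i=\iota_p(A)$ for every such $A$, and its image under $\rho:T_{p-1}\to O_{n-p+1,n-p+1}$ is precisely the standard embedding of $A$. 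Consequently $(d_i)_*=i_*$ for every $i$, and
\[
d^{1}_{p,q}=\Bigl(\sum_{i=1}^{p}(-1)^{i+1}\Bigr)\,i_*=\begin{cases}0,&p\text{ even},\\ i_*,&p\text{ odd}.\end{cases}
\]

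The only genuinely delicate point will be the third step: being careful that the two Shapiro isomorphisms, the retractions $\rho_p$ and $\rho_{p-1}$, and the conjugation $C_{g_i^{-1}}$ really fit together to give the literal standard inclusion on the $O_{n-p,n-p}$-factor, rather than some conjugate of it. Lemma~\ref{lemma:homologycommute} applied with $\kappa=g_i$ takes care of this, exactly as it did in the propositions of Section~\ref{subsubsection:globalaction}.
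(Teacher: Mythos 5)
Your proposal is correct and is essentially the same argument the paper gives. Both reduce the $d^1$ computation to the face maps $(d_i)_*$, both produce a permutation element of $O_{n,n}$ carrying $(e_1,\ldots,e_{p-1})$ to $(e_1,\ldots,\widehat{e_i},\ldots,e_p)$ that is the identity on the last $2(n-p)$ coordinates, and both invoke Lemma~\ref{lemma:homologycommute} to conclude that each $(d_i)_*$ agrees with the standard inclusion $i_*$ under the Shapiro identifications; the only difference is cosmetic, in that you unwind the Shapiro isomorphism through the intermediate stabiliser $\mathrm{St}(e_1,\ldots,\widehat{e_i},\ldots,e_p)$ and an explicit conjugation, whereas the paper applies Lemma~\ref{lemma:homologycommute} directly to the two composite morphisms of pairs.
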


\begin{proof}
For all $p\leq n$, we want to show that the following diagram commutes:
\begin{equation}
\begin{tikzcd}
H_{q}(O_{n-p,n-p}) \arrow[swap]{d}{i_{*}} \arrow{rrr}{(\iota,(e_{1}, \dots, e_{p}))_{*}}
  &&& H_{q}(O_{n,n},C_{p}(n)) \arrow{d}{(d_{i})_{*}}  \\
H_{q}(O_{n-p+1,n-p+1})  \arrow[swap]{rrr}{(\iota,(e_{1}, \dots, e_{p-1}))_{*}}
  &&& H_{q}(O_{n,n}, C_{p-1}(n)),
\end{tikzcd}  \label{cd:computedifferentials}
\end{equation}
where $\iota : O_{n-p, n-p} \hookrightarrow O_{n,n}$ denotes the inclusion map; $(e_{1}, \dots, e_{p}) : 1 \mapsto (e_{1}, \dots, e_{p})$ and recall that $d_{i}(v_{1}, \dots, v_{p}) = (v_{1},\dots, \hat{v}_{i}. \dots, v_{p} )$.
Again, we will prove this diagram commutes using Lemma \ref{lemma:homologycommute}. 
Specifically, consider the diagram 
\[
(O_{n-p,n-p},\mathbb{Z}) \stackrel[(\iota  \circ i, (e_{1}, \dots, e_{p-1}))]{(\iota, (e_{1}, \dots, \hat{e}_{i}, \dots, e_{p}))}\rightrightarrows(O_{n,n},C_{p-1}(n)).
\]
Define $A \in O_{n,n}$ by sending a hyperbolic basis to a hyperbolic basis as follows:
\begin{align*}
(e_{1}, \dots, \hat{e}_{i}, \dots, e_{p}) &\mapsto (e_{1}, \dots, e_{p-1}) \\
(f_{1}, \dots, \hat{f}_{i}, \dots, f_{p}) &\mapsto (f_{1}, \dots, f_{p-1}) \\
e_{i} &\mapsto e_{p} \\
f_{i} &\mapsto f_{p} \\
e_{j} \mapsto e_{j} \,\, \text{and} \,\, f_{j} \mapsto f_{j} & \,\, \text{for all} \,\, p+1\leq j \leq n. 
\end{align*}
Then, by construction, $(e_{1}, \dots, e_{p-1}) = A (e_{1}, \dots, \hat{e}_{i}, \dots e_{p})$.
Note the matrix of $A$ has the form
\[
A = 
\begin{pmatrix}
\sigma & 0 \\
0 & 1_{2(n-p)}
\end{pmatrix}
\]
for some permutation matrix $\sigma$. Therefore, we deduce that for every $B \in O_{n-p, n-p}$, 
\[
\iota \circ i (B) = A \iota (B) A^{-1}.
\]
By Lemma \ref{lemma:homologycommute}, the diagram commutes. 
The proposition then follows from the fact that the differential $d^{1} : H_{q}(O_{n,n}, C_{p}(n)) \rightarrow H_{q}(O_{n,n}, C_{p-1}(n))$ is induced by the differential $d = \sum_{i=1}^{p}(-1)^{i+1}d_{i} : C_{p}(n) \rightarrow C_{p-1}(n)$ and the above remains true after localisation, with the horizontal arrows becoming the identification isomorphisms. 
\end{proof}

We immediately deduce the following corollary:

\begin{corollary} \label{corollary:mE2}
For all $q < m/2$ and for all $p \leq n$, 
\[
{}_{m}E^{2}_{p,q} = 
\begin{cases}
\ker \left( H_{q}(O_{n-p,n-p}) \xrightarrow{i_{*}} H_{q}(O_{n-p+1, n-p+1}))\right), & p \,\, \text{odd} \\
\coker \left( H_{q}(O_{n-p-1,n-p-1}) \xrightarrow{i_{*}} H_{q}(O_{n-p, n-p}))\right), & p \,\, \text{even}.
\end{cases}
\]
\end{corollary}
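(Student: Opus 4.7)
The plan is to compute ${}_m E^2_{p,q}$ directly as the homology at position $p$ of the row complex $({}_m E^1_{*,q}, d^1)$, using the preceding proposition to read off the differentials. Under the identification ${}_m E^1_{p,q} \cong H_q(O_{n-p,n-p})$ from the previous corollary (valid for $q < m/2$ and $p \leq n$), the row of the $E^1$ page at height $q$ takes the form
\[
\cdots \longrightarrow H_q(O_{n-p-1,n-p-1}) \xrightarrow{d^1_{p+1,q}} H_q(O_{n-p,n-p}) \xrightarrow{d^1_{p,q}} H_q(O_{n-p+1,n-p+1}) \longrightarrow \cdots,
\]
with each $d^1_{r,q}$ being $i_*$ when $r$ is odd and $0$ when $r$ is even.

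I would then split into cases on the parity of $p$. When $p$ is odd, the outgoing differential $d^1_{p,q}$ is $i_*$ while the incoming $d^1_{p+1,q}$ vanishes (since $p+1$ is even), so the quotient $\ker/\operatorname{im}$ collapses to $\ker(i_*)$. When $p$ is even the roles are swapped: $d^1_{p,q} = 0$ and $d^1_{p+1,q} = i_*$, producing $\coker(i_*)$. Both cases reproduce the stated formula.

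The only point demanding a moment's care is the boundary case $p = n$, where the incoming term ${}_m E^1_{n+1,q}$ would formally correspond to $H_q(O_{-1,-1})$. This turns out to be harmless: no unimodular totally isotropic sequence in $R^{2n}$ has length exceeding $n$, so $C_{n+1}(n) = 0$ and the incoming differential is automatically zero, and the formula reads correctly under the convention $H_q(O_{-1,-1}) = 0$. I do not anticipate any substantive obstacle; all the real work is encoded in the preceding proposition, and the corollary is its immediate homological consequence.
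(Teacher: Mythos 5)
Your proof is correct and takes the same (essentially immediate) route as the paper, which simply computes ${}_m E^2_{p,q}$ as the homology of the row complex $({}_m E^1_{*,q}, d^1)$ using the parity formula for $d^1$ from the preceding proposition. Your observation about the boundary case $p=n$ — that $C_{n+1}(n)=0$ because a totally isotropic direct summand of $R^{2n}$ has rank at most $n$, so the incoming differential vanishes and the formula holds under the convention $H_q(O_{-1,-1})=0$ — is a worthwhile clarification that the paper leaves implicit.
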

\qed

To prove homological stability, we will need to prove the following.
\begin{proposition} \label{prop:zerodifferentials}
The differentials $d^{r}_{p,q}$ in Spectral Sequence (\ref{ss:localisedII}) are zero for $r \geq 2$ and $q < m/2$, $p \leq n$. Hence, for all $q < m/2$ and $p \leq n$, ${}_{m}E^{2}_{p,q} \cong {}_{m}E^{\infty}_{p,q}$.
\end{proposition}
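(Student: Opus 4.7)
The plan is to induct on $n$, taking the main stability theorem (Theorem 1.1) for all smaller values of $n$ as the inductive hypothesis; the base case for small $n$ is direct. Under this hypothesis, Corollary \ref{corollary:mE2} together with the iso and surjective ranges of the induction immediately yields
$${}_{m}E^{2}_{p,q} = 0 \quad \text{whenever } p \geq 1 \text{ and } p+q \leq n-1.$$
Indeed, for $p$ odd the kernel of $i_{*}:H_{q}(O_{n-p,n-p}) \to H_{q}(O_{n-p+1,n-p+1})$ lies in the iso range applied with $n' = n-p$, and for $p$ even the cokernel of $i_{*}:H_{q}(O_{n-p-1,n-p-1}) \to H_{q}(O_{n-p,n-p})$ lies in the surjective range applied with $n' = n-p-1$. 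Since ${}_{m}E^{r}_{p,q}$ is a subquotient of ${}_{m}E^{2}_{p,q}$, the vanishing persists on every higher page.

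Given this, the bulk of the cases of $d^{r}_{p,q}=0$ follow formally. Differentials with $p<r$ have target in a negative column and vanish trivially. Differentials with $p \geq r$ and $p+q \leq n-1$ vanish because the source is already zero. Differentials with $p \geq r$, $p+q=n$ and $p-r \geq 1$ vanish because their target $(p-r,q+r-1)$ has total degree $n-1$ and positive $p$-coordinate, hence lies in the vanishing region. The only remaining non-trivial cases are the boundary $p=r$, $q=n-r$ (with target $(0,n-1)$) together with the upper region $p+q \geq n+1$.

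For these remaining cases I would use the abutment vanishing $s_m^{-1}H_k(O_{n,n}, C_{*}(n)) = 0$ for $k \leq n-1$, obtained from Theorem \ref{thm:C*acyclic} via spectral sequence (\ref{ss:firsthyperhomology}), together with the finiteness of the columns ($C_p(n)=0$ for $p>n$, so the spectral sequence converges at a finite page). A diagonal bootstrap is then needed: for $q \leq n-2$, all differentials into $(0,q)$ with $r \geq 2$ have source in the known vanishing region, so ${}_{m}E^{\infty}_{0,q}={}_{m}E^{2}_{0,q}=0$. This identifies with $\coker(i_{*})$ and gives surjectivity of the stabilisation in degree $q$ up to $n-2$; iterating along the diagonals $p+q=n-1, n, \ldots$, feeding each new vanishing back into the analysis, should exhaust the remaining cases. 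The principal obstacle is carrying out this bootstrap cleanly at the boundary $q=n-1$, where the source ${}_{m}E^{r}_{r,n-r}$ does not lie in the induction's vanishing region; handling it is the essential technical content of the proof and is precisely what improves Mirzaii's stability range.
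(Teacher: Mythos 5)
Your approach --- inducting on $n$ via the stability theorem itself and using Corollary~\ref{corollary:mE2} to kill ${}_m E^2_{p,q}$ in the triangle $p \geq 1$, $p+q \leq n-1$ --- is a genuinely different route from the paper's, and you have correctly diagnosed where it breaks. For a differential $d^r_{r,\,n-r}$ landing in $(0,n-1)$, the source has ${}_m E^2_{r,n-r}$ equal to a kernel or cokernel of a stabilisation map in the degree exactly one beyond what the inductive stability range controls, so there is no reason for it to vanish; and the abutment vanishing constrains only $E^\infty$, which is a subquotient of $E^2$ by precisely the differentials you are trying to kill, so it cannot be used to conclude anything about $E^2$ or $d^r$ themselves. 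Since the proposition asserts $E^2 \cong E^\infty$ (not merely $E^\infty = 0$), the boundary diagonal $p+q=n$ and the entire region $p+q \geq n+1$ remain untouched by your tools. There is also a latent circularity worth noting: your vanishing of ${}_m E^2_{1,q}$ invokes the main theorem for $n'=n-1$, which the paper deduces from the very proposition you are proving applied to $E(n)$; you would need to arrange the induction to use only $n' \leq n-2$, which happens to suffice for the incoming differentials you actually need but should be stated.

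The paper's proof sidesteps all of this with a comparison-of-spectral-sequences argument that never invokes the main stability theorem. It constructs an $R^*$-equivariant chain map
\[
\tau = \tau_0 - \tau_1 + \tau_2 : C_*(n-2)[-2] \longrightarrow C_*(n),
\]
where $\tau_0$, $\tau_1$, $\tau_2$ prepend the pairs $(e_1,e_2)$, $(e_1,e_2-e_1)$, $(e_2,e_2-e_1)$ respectively, and then shows via Lemma~\ref{lemma:homologycommute} that after localisation ${}_m\tau_*$ is the \emph{identity} on ${}_m E^1_{p,q} \cong H_q(O_{n-p,n-p})$ for all $2 \leq p \leq n$ and $q < m/2$. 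Every differential $d^r$ with $r \geq 2$ has source in a column $p \geq r \geq 2$ where this identification holds, so naturality of $\tau_*$ forces $d^r$ on ${}_m E(n)$ to vanish because the corresponding $d^r$ on ${}_m E(n-2)$ vanishes by the inductive hypothesis (Proposition~\ref{prop:zerodifferentials} for $n-2$, base cases $n=0,1$ being immediate by column bounds), running an inner induction on the page $r$. This handles every $(p,q)$ simultaneously, including the boundary and the upper region, and is exactly the extra idea your sketch is missing.
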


\begin{proof}
Similar to \cite{nesterenko1990homology} and \cite{schlichting2017euler}, we argue by induction on $n$. 
For $n = 0,1$, the spectral sequence under consideration is located in columns 0 and 1. Therefore, the differentials $d^{r}$ for $r \geq 2$ are zero by dimension arguments. 

Assume $n \geq 2$. We seek to define a homomorphism of complexes of $O_{n-2,n-2}$-modules
\[
\tau : C_{*}(n-2)[-2] \rightarrow C_{*}(n).
\]
For $(v_{1}, \dots, v_{p-2}) \in C_{p}(n-2)[-2]$, define 
\begin{align*}
\tau_{0}(v_{1}, \dots, v_{p-2}) &:= (e_{1}, e_{2}, \bar{v}_{1}, \dots, \bar{v}_{p-2}) \\
\tau_{1}(v_{1}, \dots, v_{p-2}) &:= (e_{1}, e_{2} - e_{1}, \bar{v}_{1}, \dots, \bar{v}_{p-2})  \\
\tau_{2}(v_{1}, \dots, v_{p-2}) &:= (e_{2}, e_{2} - e_{1}, \bar{v}_{1}, \dots, \bar{v}_{p-2}),
\end{align*}
where 
\[
\bar{v}_{i} := 
\begin{pmatrix}
0 \\
0 \\
0 \\
0 \\
v_{i}
\end{pmatrix} \in R^{2n}.
\]
Define $\tau := \tau_{0} - \tau_{1} + \tau_{2}$. 
Note that $\tau$ commutes with differentials and commutes with $O_{n-2,n-2}$ multiplication from the left, so that it indeed defines a homomorphism of chain complexes of $O_{n-2,n-2}$-modules. We need to check that $\tau$ is equivariant for the global $R^{*}$-actions on the spectral sequences so that $\tau$ induces a map on the localised spectral sequences.
By Proposition \ref{prop:ActionaBa}, the global action is induced by the map $(C_{B_{a}},B_{a})$ on the Spectral Sequence (\ref{ss:hyperhomologyII}).
Therefore, $R^*$-equivariance follows from the fact that for every $a \in R^{*}$ and for all $j = 0,1,2$, the diagrams
\[
\begin{tikzcd}
(O_{n-2,n-2}, C_{p-2}(n-2)) \arrow{r}{(i,\tau_{j})}
  & (O_{n,n}, C_{p}(n)) \\
(O_{n-2,n-2}, C_{p-2}(n-2)) \arrow{u}{(C_{B_{a}}, B_{a})} \arrow[swap]{r}{(i,\tau_{j})}
  & (O_{n,n}, C_{p}(n)) \arrow[swap]{u}{(C_{B_{a}}, B_{a})},
\end{tikzcd}
\]
commute, where $i : O_{n-2,n-2} \hookrightarrow O_{n,n}$ denotes the inclusion.
The point is that $B_a(e_i)=e_i$.
Therefore, $\tau$ induces a map of spectral sequences of $R^*$-modules
\[
\tau_{*} : \tilde{E} \rightarrow E
\]
where $\tilde{E} := E(n-2)[-2, 0]$ and $E := E(n)$.

Recall from Propositions \ref{prop:ActionBa2Phia} and \ref{prop:ActionDaPhia} that the local actions are globally induced by multiplication with $a^{-2}$ for $a\in R^*$.
Localising with respect to the last action, we obtain a map on the localised spectral sequences 
\[
{}_{m}\tau_{*} : {}_{m}\tilde{E} \rightarrow {}_{m}E.
\]
Note that for all $q < m/2$ and $2 \leq p \leq n$,
\[
{}_{m}\tilde{E}^{1}_{p,q} = {}_{m}E^{1}_{p,q}(n-2)[-2,0] = {}_{m}E^{1}_{p-2, q}(n-2) \cong H_{q}(O_{n-p, n-p}).
\]
The claim will then follow by induction on $r$ using the following lemma:

\begin{lemma} \label{lem:E1identity}
The map ${}_{m}\tau_{*} : {}_{m}\tilde{E}^{1}_{p,q} \rightarrow {}_{m}E^{1}_{p,q}$ is the identity for all $q < m/2$ and $2 \leq p \leq n$.
\end{lemma}

\begin{proof}
If we can show that for $j = 0, 1, 2$, the diagrams 
\begin{equation}
\begin{tikzcd}
H_{q}(O_{n-p,n-p}) \arrow[swap]{d}{=} \arrow{rr}{(\iota,(e_{1}, \dots, e_{p-2}))_{*}}
  & &H_{q}(O_{n-2,n-2},C_{p-2}(n-2)) \arrow{d}{(\iota, \tau_{j})_{*}}  \\
H_{q}(O_{n-p,n-p})  \arrow[swap]{rr}{(\iota,(e_{1}, \dots, e_{p}))_{*}}
  && H_{q}(O_{n,n}, C_{p}(n)),
\end{tikzcd}  \label{cd:identitydifferentials}
\end{equation}
commute, where the $\iota$'s denote inclusions, we will be done, as $\tau = \tau_{0} - \tau_{1} + \tau_{2}$. 

Again, we will prove these diagrams commute using Lemma \ref{lemma:homologycommute}. 
Specifically, consider diagram 
\[
(O_{n-p,n-p},\mathbb{Z}) \stackrel[(\iota, (e_{1}, \dots, e_{p}))]{(\iota, \tau_{j}(e_{1}, \dots, e_{p-2}))}\rightrightarrows(O_{n,n},C_{p}(n))
\]
Note that $\tau_{0}(e_{1}, \dots, e_{p-2}) = (e_{1}, e_{2}, e_{3}, \dots, e_{p})$, so that diagram (\ref{cd:identitydifferentials}) commutes in the case for $j = 0$ by functoriality of group homology. 
For $j = 1, 2$, we have $\tau_{1}(e_{1}, \dots, e_{p-2}) = (e_{1}, e_{2} - e_{1}, e_{3}, \dots, e_{p})$ and $\tau_{2}(e_{1}, \dots, e_{p-2}) = (e_{2}, e_{2} - e_{1}, e_{3}, \dots, e_{p})$. 
Define a matrix $A \in O_{n,n}(R)$ by 
\begin{align*}
&e_{1} \mapsto e_{1} \\
&e_{2} \mapsto e_{2} - e_{1} \\
&f_{1} \mapsto f_{1} + f_{2} \\
&f_{2} \mapsto f_{2} \\ 
&e_{j} \mapsto e_{j} \,\, \text{for all} \,\, 3 \leq j \leq n \\
 &f_{j} \mapsto f_{j} \,\, \text{for all} \,\, 3 \leq j \leq n.
\end{align*}
Similarly, define $B \in O_{n,n}$ by 
\begin{align*}
&e_{1} \mapsto e_{2} \\
&e_{2} \mapsto e_{2} - e_{1} \\
&f_{1} \mapsto f_{1} + f_{2} \\
&f_{2} \mapsto -f_{1} \\ 
&e_{j} \mapsto e_{j} \,\, \text{for all} \,\, 3 \leq j \leq n \\
 &f_{j} \mapsto f_{j} \,\, \text{for all} \,\, 3 \leq j \leq n.
\end{align*}
Then, $A(e_{1}, \dots e_{p}) = \tau_{1}(e_{1}, \dots, e_{p-2})$, $B(e_{1}, \dots e_{p}) = \tau_{2}(e_{1}, \dots, e_{p-2})$ and for every $M \in O_{n-p,n-p}$, $\iota(M) = A \iota(M)A^{-1} = B \iota(M)B^{-1}$. 
Thus, by Lemma \ref{lemma:homologycommute}, Diagram (\ref{cd:identitydifferentials}) commutes for every $j = 0, 1, 2$.
These diagrams still commute after localisation, but now the horizontal maps become the identification isomorphisms. 
\end{proof}
This proves the lemma, and thus Proposition \ref{prop:zerodifferentials}.
\end{proof}

\begin{theorem}
Let $R$ be a commutative local ring with infinite residue field such that $2 \in R^{*}$. Then, the natural homomorphism 
\[
H_{k}(O_{n,n}(R)) \longrightarrow H_{k}(O_{n+1,n+1}(R))
\]
is an isomorphism for $k \leq n-1$ and surjective for $k \leq n$. 
\end{theorem}

\begin{remark}
This improves Mirzaii's result \cite{mirzaii2004homology} by 1 and matches the analogous result for fields obtained by Sprehn-Wahl \cite{sprehn2020homological}. 
\end{remark}

\begin{proof}
Choose $m > 0$ sufficiently large so that we may apply Corollary \ref{corollary:mE2} when $q \leq n -1$.

Recall from Theorem \ref{thm:C*acyclic} that $H_{q}(C_{*}(n)) = 0$ for all $q \leq n-1$. Thus, from the Spectral Sequences (\ref{ss:firsthyperhomology}) and  (\ref{ss:localisedII}), Corollary \ref{corollary:mE2} and Proposition \ref{prop:zerodifferentials}, we deduce 
\begin{align*}
\coker \left( H_{q}(O_{n-1,n-1}) \xrightarrow{i_{*}} H_{q}(O_{n, n}))\right) &= {}_{m}E^{2}_{0,q} \\
&\cong {}_{m}E^{\infty}_{0,q} \\
&= 0
\end{align*}
for all $q \leq n-1$, and 
\begin{align*}
\ker \left( H_{q}(O_{n-1,n-1}) \xrightarrow{i_{*}} H_{q}(O_{n, n}))\right) &= {}_{m}E^{2}_{1,q} \\
&\cong {}_{m}E^{\infty}_{1,q} \\
&= 0
\end{align*}
for all $q \leq n-2$.

The theorem follows. 
\end{proof}

\section{Homological stability for $SO_{n,n}$} \label{sec:specialhomstability}

Recall we have two hyperhomology spectral sequences  
\begin{align}
E^{2}_{p,q}(n) &= H_{p}(SO_{n,n}, H_{q}(C_{*}(n))) \Rightarrow H_{p+q}(SO_{n,n}, C_{*}(n)) \label{ss:specialfirsthyperhomology} \\
E^{1}_{p,q}(n) &= H_{q}(SO_{n,n}, C_{p}(n)) \Rightarrow H_{p+q}(SO_{n,n}, C_{*}(n)).  \label{ss:specialsecondhyperhomology}
\end{align}
Moreover, recall that in Theorem \ref{thm:C*acyclic}, we proved $H_{q}(C_{*}(n)) = 0$ for every $q \leq n-1$. As we expect the homological stability range for $SO_{n,n}$ to be the same as for $O_{n,n}$, a reasonable proof strategy is to localise Spectral Sequence (\ref{ss:specialsecondhyperhomology}) in the same manner as we did for the $O_{n,n}$ and analyse the localised spectral sequences. The analysis will turn out to be very similar to the $O_{n,n}$ case, except for the situation when $p = n$, corresponding to the fact that the action of $SO_{n,n}$ on $\mathcal{IU}_{p}(R^{2n})$ is \textit{transitive} only for $p < n$, see Lemma \ref{lemma:specialtransitive}. But in the end, this will not prove to be too significant. 

Note that for all $n > 0$, we have short exact sequences 
\begin{align*}
1 \rightarrow SO_{n,n} \rightarrow O_{n,n} \rightarrow \mathbb{Z}_{2} \rightarrow 1,
\end{align*}
where the right arrow given is by the determinant map. Moreover, if we define $ST_{k} \leq T_{k}$ to be the subgroup of matrices in $T_{k}$ having determinant 1, the projection map $\rho: T_{k} \rightarrow O_{n-k, n-k}$ restricts to a map $\rho : ST_{k} \rightarrow SO_{n-k,n-k}$. Note that 
\[
\ker(\rho: T_{k} \rightarrow O_{n-k, n-k}) = \ker(\rho: ST_{k} \rightarrow SO_{n-k, n-k}),
\]
since, by inspection on the matrices in $T_{k}$, we deduce that $\det A = \det \rho(A)$ for every $A \in T_{k}$, and that both kernels consist precisely of those matrices that map to the identity matrix. This observation will turn out to be significant in the forthcoming analysis. Furthermore, note that $ST_{n} = T_{n}$. We use the conventions that $SO_{0,0} = 1$ and $ST_{0} = SO_{n,n}$. 

We obtain short exact sequences for every $0 \leq k \leq n$.  
\begin{align}
1 \rightarrow L_{k} \rightarrow ST_{k} \rightarrow SO_{n-k,n-k} \rightarrow 1. \label{s.e.s:speciallocalising}
\end{align}

\section{Local $R^*$-actions and transitivity}

Define a local $R^{*}$-action on short exact sequence (\ref{s.e.s:speciallocalising}) in exactly the same was as we did in Section \ref{subsubsec:local $R^*$-action}, namely we conjugate matrices in $ST_{k}$ with the matrix $D_{a,k} \in SO_{n,n}$. As $\ker(\rho: T_{k} \rightarrow O_{n-k, n-k}) = \ker(\rho: ST_{k} \rightarrow SO_{n-k, n-k})$, the \textit{exact} same reasoning as in Section \ref{subsubsec:local $R^*$-action} can be used to conclude that, \textit{after localistaion}, the homology of $ST_{k}$ and $SO_{n-k, n-k}$ coincide:
\begin{corollary} \label{corollary:specialstabiso}
The inclusion $SO_{n-k,n-k} \hookrightarrow T_{k}$ induces isomorphisms 
\[
H_{t}(SO_{n-k, n-k}) \xrightarrow{\cong} s_{m}^{-1}H_{t}(ST_{k})
\]
for all $t < m/2$.
\end{corollary}

Next, we study the transitivity of the $SO_{n,n}$ action on $\mathcal{IU}_{p}(R^{2n})$. 

\begin{lemma} \label{lemma:specialtransitive}
The action of $SO_{n,n}$ on $\mathcal{IU}_{p}(R^{2n})$ is transitive for all $p < n$.
\end{lemma}

\begin{proof}
Let $(u_{1},\dots, u_{p}), (v_{1},\dots, v_{p}) \in \mathcal{IU}_{p}(R^{2n})$. By Lemma \ref{lem:hyperbolicbasis}, we deduce there exists an $u_{1}^{\#}, \dots, u_{p}^{\#}$ such that $(u_{1}, u_{1}^{\#}, \dots, u_{p}, u_{p}^{\#})$ has Gram matrix $\psi_{2p}$, and which may be extended to hyperbolic basis $(u_{1}, u_{1}^{\#}, \dots, u_{p}, u_{p}^{\#}, x_{1}, x_{1}^{\#}, \dots, x_{n-p}, x_{n-p}^{\#})$. Similarly, there exists an $v_{1}^{\#}, \dots, v_{p}^{\#}$ such that $(v_{1}, v_{1}^{\#}, \dots, v_{p}, v_{p}^{\#})$ has Gram matrix $\psi_{2p}$, and which may be extended to hyperbolic basis $(v_{1}, v_{1}^{\#}, \dots, v_{p}, v_{p}^{\#}, y_{1}, y_{1}^{\#}, \dots, y_{n-p}, y_{n-p}^{\#})$.  

Let $B \in O_{n,n}$ be the matrix 
\[
B := (u_{1} \quad u_{1}^{\#} \quad \cdots \quad u_{p}\quad u_{p}^{\#}\quad x_{1} \quad x_{1}^{\#} \quad \cdots \quad x_{n-p} \quad x_{n-p}^{\#}), 
\] 
and let $C \in O_{n,n}$ be the matrix 
\[
C := (v_{1} \quad v_{1}^{\#} \quad \cdots \quad v_{p}\quad v_{p}^{\#}\quad y_{1} \quad y_{1}^{\#} \quad \cdots \quad y_{n-p} \quad y_{n-p}^{\#}).
\] 
If $\det B = \det C$, then $CB^{-1} \in SO_{n,n}$ and maps $(u_{1},\dots, u_{p})$ to  $(v_{1},\dots, v_{p})$. If $\det B \neq \det C$, then define $\hat{C} := CT$, where $T$ is the matrix that swaps $y_{n-p}$ and $y_{n-p}^{\#}$ in the columns of $C$. Then, as $\det T = -1$, it follows $\det B = \det \hat{C}$, and we are in the previous case. 
\end{proof}

Recall that Shapiro's Lemma gives an isomorphism 
\[
\bigoplus_{[x] \in S/G} (i_{x}, x)_{*} : \bigoplus_{[x] \in S/G} H_{*}(G_{x}, \mathbb{Z}) \xrightarrow{\cong} H_{*}(G, \mathbb{Z}[S])
\]
of homology groups, where the direct sum is over a set of representatives $x \in S$ of equivalence classes $[x] \in S/G$; the group $G_{x}$ is the \textit{stabiliser} of $G$ at $x \in S$; the homomorphism $i_{x} : G_{x} \subseteq G$ is the inclusion; and $x$ also denotes the homomorphism of abelian groups $\mathbb{Z} \rightarrow \mathbb{Z}[S]: 1 \mapsto x$.

We apply Shapiro's Lemma in the case $G = SO_{n,n}(R)$ and $S = \mathcal{IU}_{p}(R^{2n})$. 

Thus, by Lemma \ref{lemma:specialtransitive} we have identification isomorphisms given by Shapiro's Lemma for every, $ 0\leq p < n$ and $q \geq 0$
\begin{align}
H_{q}(ST_{p}) \xrightarrow{\cong} H_{q}(SO_{n,n}, C_{p}(n)).
\end{align}

For $p = n$, we claim that the action of $SO_{n,n}$ on $\mathcal{IU}_{n}(R^{2n})$ has two orbits:
\begin{proposition} \label{prop:specialorbits}
For $n \geq 1$, the action of $SO_{n,n}$ on $\mathcal{IU}_{n}(R^{2n})$ has orbits corresponding to $\mathbb{Z}_{2}$. 
\end{proposition}
\begin{proof} 
We know by Lemma \ref{lem:hyperbolicbasis} that the action of $O_{n,n}$ on $\mathcal{IU}_{n}(R^{2n})$ is transitive, so that we have an isomorphism of $O_{n,n}-$sets
\[
O_{n,n}/T_{n} \cong \mathcal{IU}_{n}(R^{2n}).
\]
Furthermore, note that $T_{n} = ST_{n} \leq SO_{n,n} \leq O_{n,n}$, so that we have a canonical surjection $O_{n,n}/T_{n} \rightarrow O_{n,n}/SO_{n,n}$ with fibre $SO_{n,n}/T_{n}$. Therefore, we have an isomorphism of $O_{n,n}$-sets $\mathcal{IU}_{n}(R^{2n})/SO_{n,n} \cong O_{n,n}/SO_{n,n}$. This gives us
\[
\left| \mathcal{IU}_{n}(R^{2n})/SO_{n,n} \right| = \left| O_{n,n}/SO_{n,n} \right| = \left| \mathbb{Z}_{2} \right|,
\]
where the last equality follows from the short exact sequence 
\begin{align*}
1 \rightarrow SO_{n,n} \rightarrow O_{n,n} \rightarrow  \mathbb{Z}_{2} \rightarrow 1.
\end{align*}
\end{proof}
Therefore, Shapiro's Lemma gives us an isomorphism 
\begin{align}
H_{q}(St(e_{1}, \dots, e_{n})) \oplus H_{q}(St(e_{1}, \dots, e_{n-1}, f_{n})) \xrightarrow{\cong} H_{q}(SO_{n,n}, C_{n}(n)).
\end{align}
where $St(e_{1}, \dots, e_{n-1}, f_{n})$ denotes the stabiliser of $(e_{1}, \dots, e_{n-1}, f_{n})$ in $SO_{n,n}$, and the identification map is given by Shapiro's Lemma. To ease notation, we define $\overline{T}_{n} := St(e_{1}, \dots, e_{n-1}, f_{n})$.

We will compute $\overline{T}_{n}$ and show that, \textit{after localisation}, all non-zero homology groups of $\overline{T}_{n}$ vanish. 

\subsection{Computation of  $\overline{T}_{n}$ and a local $R^{*}$-action}

We first compute $\overline{T}_{n}$. 

\begin{proposition}
Matrices $A \in \overline{T}_{n}$ are of the form 
\[
A =  
\begin{pmatrix}
1 & c^{1}_{1} & 0 & c^{1}_{2} & \cdots & 0 & c^{1}_{n-1} & c^{1}_{n} & 0 \\
0 & 1 & 0 & 0 & \cdots & 0 & 0 & 0 & 0 \\
0 & c^{2}_{1} & 1 & c^{2}_{2} & \cdots & 0 & c^{2}_{n-1} & c^{2}_{n} & 0 \\
0 & 0 & 0 & 1 & \cdots & 0 & 0 & 0 & 0 \\
\vdots & \vdots & \vdots & \vdots & \ddots & \vdots & \vdots & \vdots & \vdots\\
0 & c^{n-1}_{1} & 0 & c^{n-1}_{2} & \cdots & 1 & c^{n-1}_{n-1} & c^{n-1}_{n} & 0 \\
0 & 0 & 0 & 0 & \cdots & 0 & 1 & 0 & 0\\
0 & 0 & 0 & 0 & \cdots & 0 & 0 & 1 & 0\\
0 & c^{n}_{1} & 0 & c^{n}_{2} & \cdots & 0 & c^{n}_{n-1} & 0 & 1
\end{pmatrix}
\]
where  $c^{i}_{j} \in R $, subject to the conditions
\begin{align}            
c^{i}_{j} + c^{j}_{i}  = 0. \label{eqn:staboverline}           
\end{align}
\end{proposition}
\begin{proof}
Let $A \in \overline{T}_{n}$. Then, $Ae_{1} = e_{1}, \dots, Ae_{n-1} = e_{n-1}$ and $Af_{n} = f_{n}$. This gives the 1st, 3rd, $...., (2n-3)$rd and $2n$-th column of $A$. Moreover, for a fixed $1\leq i < n$ and for any $1 \leq j < n$, we have 
\[
\langle e_{i}, Af_{j} \rangle = \langle Ae_{i}, Af_{j} \rangle = \langle e_{i}, f_{j} \rangle = \delta_{ij}
\] 
and 
\[
\langle e_{i}, Ae_{n} \rangle = \langle Ae_{i}, Ae_{n} \rangle = \langle e_{i}, e_{n} \rangle = 0.
\]
This gives the 2nd, 4th, $\dots, (2n-2)$nd rows of $A$.

Furthermore, note that for $1 \leq j < n$, 
\[
\langle f_{n}, Af_{j} \rangle = \langle Af_{n}, Af_{j} \rangle = \langle f_{n}, f_{j} \rangle = 0,
\]
\[
\langle f_{n}, Ae_{j} \rangle = \langle Af_{n}, Ae_{j} \rangle = \langle f_{n}, e_{j} \rangle = 0
\]
and 
\[
\langle f_{n}, Ae_{n} \rangle = \langle Af_{n}, Ae_{n} \rangle = \langle f_{n}, e_{n} \rangle = 1.
\]
This gives the $(2n-1)$th row of $A$.
Filling in the remaining entries by constants to be determined, we have that 
\[
A =  
\begin{pmatrix}
1 & c^{1}_{1} & 0 & c^{1}_{2} & \cdots & 0 & c^{1}_{n-1} & c^{1}_{n} & 0 \\
0 & 1 & 0 & 0 & \cdots & 0 & 0 & 0 & 0 \\
0 & c^{2}_{1} & 1 & c^{2}_{2} & \cdots & 0 & c^{2}_{n-1} & c^{2}_{n} & 0 \\
0 & 0 & 0 & 1 & \cdots & 0 & 0 & 0 & 0 \\
\vdots & \vdots & \vdots & \vdots & \ddots & \vdots & \vdots & \vdots & \vdots\\
0 & c^{n-1}_{1} & 0 & c^{n-1}_{2} & \cdots & 1 & c^{n-1}_{n-1} & c^{n-1}_{n} & 0 \\
0 & 0 & 0 & 0 & \cdots & 0 & 1 & 0 & 0\\
0 & 0 & 0 & 0 & \cdots & 0 & 0 & 1 & 0\\
0 & c^{n}_{1} & 0 & c^{n}_{2} & \cdots & 0 & c^{n}_{n-1} & \Delta & 1
\end{pmatrix}
\]
where $c^{i}_{j}, \Delta \in R $.

We use the equation $^{t}A \psi_{2n} A = \psi_{2n}$ to determine the conditions on these variables. Specifically, we have that {\tiny
%\begin{align*}
$$\begin{array}{l}
^{t}A \psi A 
 = \begin{pmatrix}
1 & 0 & 0 & 0 & \cdots & 0 & 0 & 0 & 0 \\
c^{1}_{1} & 1 & c^{2}_{1} & 0 & \cdots & c^{n-1}_{1} & 0 & 0 & c^{n}_{1} \\
0 & 0 & 1 & 0 & \cdots & 0 & 0 & 0 & 0 \\
c^{1}_{2} & 0 & c^{2}_{2} & 1 & \cdots & c^{n-1}_{2} & 0 & 0 & c^{n}_{2}  \\
\vdots & \vdots & \vdots & \vdots & \ddots & \vdots & \vdots & \vdots & \vdots \\
0 & 0 & 0 & 0 & \cdots & 1 & 0 & 0 & 0 \\
c^{1}_{n-1} & 0 & c^{2}_{n-1} & 0 & \cdots & c^{n-1}_{n-1} & 1 & 0 & c^{n}_{n-1} \\
c^{1}_{n} & 0 & c^{2}_{n} &0 & \cdots & c^{n-1}_{n} & 0 & 1 & \Delta \\
0 & 0 & 0 & 0 & \cdots & 0 & 0 & 0 & 1
\end{pmatrix}
\psi
\begin{pmatrix}
1 & c^{1}_{1} & 0 & c^{1}_{2} & \cdots & 0 & c^{1}_{n-1} & c^{1}_{n} & 0 \\
0 & 1 & 0 & 0 & \cdots & 0 & 0 & 0 & 0 \\
0 & c^{2}_{1} & 1 & c^{2}_{2} & \cdots & 0 & c^{2}_{n-1} & c^{2}_{n} & 0 \\
0 & 0 & 0 & 1 & \cdots & 0 & 0 & 0 & 0 \\
\vdots & \vdots & \vdots & \vdots & \ddots & \vdots & \vdots & \vdots & \vdots\\
0 & c^{n-1}_{1} & 0 & c^{n-1}_{2} & \cdots & 1 & c^{n-1}_{n-1} & c^{n-1}_{n} & 0 \\
0 & 0 & 0 & 0 & \cdots & 0 & 1 & 0 & 0\\
0 & 0 & 0 & 0 & \cdots & 0 & 0 & 1 & 0\\
0 & c^{n}_{1} & 0 & c^{n}_{2} & \cdots & 0 & c^{n}_{n-1} & \Delta & 1
\end{pmatrix} \\
\\
= \begin{pmatrix}
0 & 1 & 0 & 0 & \cdots & 0 & 0 & 0 & 0 \\
1 & c^{1}_{1} + c^{1}_{1} & 0 & c^{1}_{2} + c^{2}_{1} & \cdots & 0 & c^{1}_{n-1} + c^{n-1}_{1} & c^{1}_{n} + c^{n}_{1} & 0 \\
0 & 0 & 0 & 1 & \cdots & 0 & 0 & 0 & 0  \\
0 & c^{1}_{2} + c^{2}_{1} & 1 & c^{2}_{2} + c^{2}_{2} & \cdots & 0 & c^{2}_{n-1} + c^{n-1}_{2} & c^{2}_{n} + c^{n}_{2} & 0 \\
\vdots & \vdots & \vdots & \vdots & \ddots & \vdots & \vdots & \vdots & \vdots\\
0 & 0 & 0 & 0 & \cdots & 0 & 1 & 0 & 0\\
0 & c^{1}_{n-1} + c^{n-1}_{1} & 0 & c^{2}_{n-1} + c^{n-1}_{2} & \cdots & 1 & c^{n-1}_{n-1} + c^{n-1}_{n-1} & c^{n-1}_{n} + c^{n}_{n-1} & 0 \\
0 & c^{1}_{n} + c^{n}_{1} & 0 & c^{2}_{n} + c^{n}_{2} & \cdots & 0 & c^{n-1}_{n} + c^{n}_{n-1}  & 2\Delta & 1 \\
0 & 0 & 0 & 0 & \cdots & 0 & 0 & 1 & 0
\end{pmatrix} \\
\\
= \psi.
\end{array}$$
%\end{align*} 
}
Thus, we conclude $\Delta = 0$ necessarily and we derive the required equations. 
\end{proof}

We now define a local $R^{*}$-action on $\overline{T}_{n}$. This will be slightly different from the local actions on $T_{k}$. We will show  that, \textit{after localisation}, the non-zero homology groups of $\overline{T}_{n}$ \textit{vanish}. Eventually, we will show the global action considered in subsection \ref{subsubsection:globalaction} induces this local action on $\overline{T}_{n}$. 

\vspace{1ex}

%\red{\subsubsection{Localising homology groups}}  

\begin{definition}[Local action]
Let $a \in R^{*}$. Define a $2n \times 2n$ matrix $\overline{D}_{a,n}$ by
\[
\overline{D}_{a,n} := 
\begin{pmatrix}
a \\
& a^{-1}\\
&&\ddots \\
&&&a \\
&&&& a^{-1} \\
&&&&&a^{-1} \\
&&&&&&a
\end{pmatrix}
= \left(\bigoplus_{1}^{n-1}
\begin{pmatrix}
a & 0 \\
0 & a^{-1}
\end{pmatrix}\right) \bigoplus
\begin{pmatrix}
a^{-1} & 0 \\
0 & a
\end{pmatrix}.
\]
Note that $\overline{D}_{a,n} \in SO_{n,n}(R)$. 
The {\em local action} of $R^*$ on $\overline{T}_{n}$ is the conjugation action of $\overline{D}_{a,n}$ on $\overline{T}_{n}$. 
\end{definition}
The local action preserves $\overline{T}_{n}$ because
\begin{align*}
\overline{D}_{a,n} 
&\begin{pmatrix}
1 & c^{1}_{1} & 0 & c^{1}_{2} & \cdots & 0 & c^{1}_{n-1} & c^{1}_{n} & 0 \\
0 & 1 & 0 & 0 & \cdots & 0 & 0 & 0 & 0 \\
0 & c^{2}_{1} & 1 & c^{2}_{2} & \cdots & 0 & c^{2}_{n-1} & c^{2}_{n} & 0 \\
0 & 0 & 0 & 1 & \cdots & 0 & 0 & 0 & 0 \\
\vdots & \vdots & \vdots & \vdots & \ddots & \vdots & \vdots & \vdots & \vdots\\
0 & c^{n-1}_{1} & 0 & c^{n-1}_{2} & \cdots & 1 & c^{n-1}_{n-1} & c^{n-1}_{n} & 0 \\
0 & 0 & 0 & 0 & \cdots & 0 & 1 & 0 & 0\\
0 & 0 & 0 & 0 & \cdots & 0 & 0 & 1 & 0\\
0 & c^{n}_{1} & 0 & c^{n}_{2} & \cdots & 0 & c^{n}_{n-1} & 0 & 1
\end{pmatrix}
\overline{D}_{a,n}^{-1} \\
&\\
= &\begin{pmatrix}
1 & a^{2}c^{1}_{1} & 0 & a^{2}c^{1}_{2} & \cdots & 0 & a^{2}c^{1}_{n-1} & a^{2}c^{1}_{n} & 0 \\
0 & 1 & 0 & 0 & \cdots & 0 & 0 & 0 & 0 \\
0 & a^{2}c^{2}_{1} & 1 & a^{2}c^{2}_{2} & \cdots & 0 & a^{2}c^{2}_{n-1} & a^{2}c^{2}_{n} & 0 \\
0 & 0 & 0 & 1 & \cdots & 0 & 0 & 0 & 0 \\
\vdots & \vdots & \vdots & \vdots & \ddots & \vdots & \vdots & \vdots & \vdots\\
0 & a^{2}c^{n-1}_{1} & 0 & a^{2}c^{n-1}_{2} & \cdots & 1 & a^{2}c^{n-1}_{n-1} & a^{2}c^{n-1}_{n} & 0 \\
0 & 0 & 0 & 0 & \cdots & 0 & 1 & 0 & 0\\
0 & 0 & 0 & 0 & \cdots & 0 & 0 & 1 & 0\\
0 & a^{2}c^{n}_{1}  & 0 & a^{2}c^{n}_{2}  & \cdots & 0 & a^{2}c^{n}_{n-1}  & 0 & 1
\end{pmatrix} \in \overline{T}_{n}.
\end{align*}

We show that localising with respect to the elements $s_{m}$ \textit{kills} the non-zero homology groups of $\overline{T}_{n}$ when $m$ is taken to infinity. This is used to make the identifications in Corollary \ref{corollary:speciallocalisedE1}.

\begin{lemma} \label{lemma:killoverline}
We have $s_{m}^{-1}H_{0}(\overline{T}_{n}) = \mathbb{Z} $ and for all $1\leq 2q < m$, $s_{m}^{-1}H_{q}(\overline{T}_{n}) = 0$.
\end{lemma}

\begin{proof}
We claim there is a short exact sequence of groups 
\begin{align}
1 \rightarrow (R^{n \choose 2} ,+) \rightarrow \overline{T}_{n} \rightarrow 1 \rightarrow 1. \label{ses:junkoverline}
\end{align}
The first arrow maps
\[
(c_{1}, \dots ) \mapsto A_{(c_{1}, \dots )}
\]
where $A_{(c_{1}, \dots )} \in \overline{T}_{n}$ is defined by conditions (\ref{eqn:staboverline}) (with some ordering specified beforehand). The second maps $A \in \overline{T}_{n}$ to its bottom right identity matrix. One may check that $\overline{T}_{n}$ is abelian, and these arrows define a short exact sequence of abelian groups. 

Furthermore, this short exact sequence of abelian groups is $R^{*}$-equivariant where $b \in R^{*}$ acts on $(R^{n \choose 2},+)$ via pointwise multiplication by $b^{2}$, the element $b\in R^*$ acts on $\overline{T}_{n}$ via conjugation by $\overline{D}_{b,n}$ and the action of $b$ on $1$ is taken to be trivial. 

By Proposition \ref{prop:magicallykill}, $s_{m}^{-1}H_{q}(\overline{T}_{n}) = 0$ for all $1\leq 2q < m$. The equality $s_{m}^{-1}H_{0}(\overline{T}_{n}) = \mathbb{Z}$ follows from fact that $R^{*}$ acts trivially on $H_{0}(\overline{T}_{n})$.
\end{proof}

\section{A global action on the $SO_{n,n}$ spectral sequence} \label{subsubsection:specialglobalaction}

As before, we want to realise these `local actions' as a `global action' on the spectral sequence 
\begin{align}
E^{1}_{p,q}(n) &= H_{q}(SO_{n,n}, C_{p}(n)) \Rightarrow H_{p+q}(SO_{n,n}, C_{*}(n))  \label{ss:specialhyperhomologyII}.
\end{align}
Again, we do this by defining an action on the associated exact couple with abutment. Specifically, the spectral sequence 
\[
E^{1}_{p,q} = H_{q}(SO_{n,n}, C_{p}(n)) \Rightarrow H_{p+q}(SO_{n,n}, C_{*}(n)) 
\]
may be obtained from the exact couple with abutment 
\begin{equation}
\begin{tikzcd}
\bigoplus_{p,q} E^{1}_{p,q} \arrow{r}{k} 
  & \bigoplus_{p,q} D^{1}_{p,q} \arrow{d}{i} \arrow{r}{\sigma} 
    & \bigoplus_{p+q} A_{p+q}  \\
    & \bigoplus_{p,q} D^{1}_{p,q} \arrow{lu}{j} \arrow{ru}{\sigma}
\end{tikzcd} \label{exactcouple:specialhyperhomologyII}
\end{equation}
with $E^{1}_{p,q} = H_{p+q}(SO_{n,n}, C_{\leq p}(n)/C_{\leq p-1}(n))$; $D^{1}_{p,q} = H_{p+q}(SO_{n,n}, C_{\leq p}(n))$; $A_{p+q} = H_{p+q}(SO_{n,n},C_{*}(n))$; the maps $i, j ,k$ being the maps of the long exact sequence of homology groups associated to the short exact sequence of complexes 
\[
0 \rightarrow C_{\leq p-1}(n) \rightarrow C_{\leq p}(n) \rightarrow C_{\leq p}(n)/C_{\leq p-1}(n) \rightarrow 0,
\]
and $\sigma$ is induced by the inclusion.

For $a \in R^{*}$, we define the global action on Spectral Sequence (\ref{ss:specialhyperhomologyII}) to be the action induced by the map $(C_{B_{a^{-2}}},B_{a^{-2}})$ on exact couple (\ref{exactcouple:specialhyperhomologyII}), where $C_{B_{a^{-2}}}$ denotes conjugation by the matrix $B_{a^{-2}}$ of Section \ref{subsubsection:globalaction}, and $B_{a^{-2}}$ also refers to multiplication by this matrix. 

As $D_{a,k} \in SO_{n,n}$ for every $0 \leq k \leq n$, the proof of Proposition \ref{prop:ActionBa2Phia} may be used to prove the following. 
\begin{proposition}
Let $k,q \geq 0$ and $n\geq 1$. Then, for all $a \in R^{*}$, the following diagram commutes:
\[
\begin{tikzcd}
H_{q}(SO_{n,n}, C_{k}(n)) \arrow{rr}{(C_{B_{a^{-2}}},B_{a^{-2}})_{*}}
  && H_{q}(SO_{n,n}, C_{k}(n)) \\
H_{q}(SO_{n,n},C_{k}(n)) \arrow{u}{id} \arrow[swap]{rr}{(id, \phi_{a})_{*}}
  && H_{q}(SO_{n,n},C_{k}(n)) \arrow[swap]{u}{id},
\end{tikzcd}
\]
where for $a \in R^{*}$, the map 
\[
(id, \phi_{a}): (SO_{n,n}, C_{k}(n)) \rightarrow (SO_{n,n}, C_{k}(n))
\]
is defined to be the identity on $SO_{n,n}$ and on basis elements of $C_{k}(n)$ as 
\[
\phi_{a} : (v_{1}, \dots , v_{k}) \mapsto (a^{-1}v_{1}, \dots, a^{-1}v_{k}).
\]
\end{proposition}

For the next proposition, we need to treat the case $k = n$ separately: 

\begin{proposition}
%\label{prop:ActionDaPhia}
Let $q \geq 0$ and $0 \leq k < n$. Then, for all $a \in R^{*}$, the following diagram commutes:
\[
\begin{tikzcd}
H_{q}(SO_{n,n}, C_{k}(n)) \arrow{r}{(id,\phi_{a})_{*}}
  & H_{q}(SO_{n,n}, C_{k}(n)) \\
H_{q}(ST_{k},\mathbb{Z}) \arrow{u}{(i,(e_{1}, \dots, e_{k}))_{*}}[swap]{\cong} \arrow[swap]{r}{C_{D_{a,k}}}
  & H_{q}(ST_{k},\mathbb{Z}) \arrow[swap]{u}{(i,(e_{1}, \dots, e_{k}))_{*}}[swap]{\cong},
\end{tikzcd}
\]
where the vertical arrows are the isomorphisms given by Shapiro's Lemma and the map $C_{D_{a,k}}$ denotes the map induced by conjugation with the element $D_{a,k}$ on the stabiliser $ST_{k}$.

Moreover, for $q \geq 0$ and $k = n$, the following diagram commutes for all $a \in R^{*}$:
\[
\begin{tikzcd}
H_{q}(SO_{n,n}, C_{n}(n)) \arrow{rr}{(id,\phi_{a})_{*}}
  && H_{q}(SO_{n,n}, C_{n}(n)) \\
H_{q}(T_{n},\mathbb{Z})\oplus H_{q}(\overline{T}_{n},\mathbb{Z}) \arrow{u}{(i,(e_{1}, \dots, e_{n}))_{*} \oplus (i,(e_{1}, \dots, f_{n}))_{*} }[swap]{\cong} \arrow[swap]{rr}{C_{D_{a,n}} \oplus C_{\overline{D}_{a,n}} }
  && H_{q}(T_{n},\mathbb{Z})\oplus H_{q}(\overline{T}_{n},\mathbb{Z})  \arrow[swap]{u}{(i,(e_{1}, \dots, e_{n}))_{*} \oplus (i,(e_{1}, \dots, f_{n}))_{*}}[swap]{\cong},
\end{tikzcd}
\]
where the vertical arrows are the isomorphisms given by Shapiro's Lemma and the map $C_{D_{a,n}} \oplus C_{\overline{D}_{a,n}} $ denotes the map induced by conjugation with the element $D_{a,n}$ on the stabiliser $T_{n}$ sum with the map induced by conjugation with the element $\overline{D}_{a,n}$ on the stabiliser $\overline{T}_{n}$.
\end{proposition}
\begin{proof}
The proof of the first half of the proposition is exactly the same as the $O_{n,n}$ case, since $D_{a,k} \in SO_{n,n}$. See proposition \ref{prop:ActionDaPhia}.
The proof that the first component commutes is exactly the same as the $O_{n,n}$ case, since $D_{a,n} \in SO_{n,n}$.
For the second component, consider the diagram 
\[
(\overline{T}_{n},\mathbb{Z}) \stackrel[(f_{2}, \varphi_{2})]{(f_{1}, \varphi_{1})}\rightrightarrows(SO_{n,n},C_{n}(n))
\]
where $(f_{1}, \varphi_{1}) := (i, (a^{-1}e_{1}, \dots, a^{-1}f_{n}))$ and $(f_{2}, \varphi_{2}) := (iC_{\overline{D}_{a,n}}, (e_{1}, \dots, f_{n}))$,
and $i:\overline{T}_n \to SO_{n,n}$ is the natural inclusion of groups.
Let $\kappa = \overline{D}_{a,n} \in SO_{n,n}$. 
Then, for every $A \in \overline{T}_{n}$, 
$$
f_{2}(A) = iC_{\overline{D}_{a,n}}(A) 
= \overline{D}_{a,n}A \overline{D}_{a,n}^{-1} 
= \overline{D}_{a,n}i(A) \overline{D}_{a,n}^{-1} 
= \kappa f_{1}(A) \kappa^{-1}
$$
and
\[
(e_{1}, \dots, f_{n}) = \overline{D}_{a,n}(a^{-1}e_{1}, \dots, a^{-1}f_{n}) = \kappa (a^{-1}e_{1}, \dots, a^{-1}f_{n}).
\]
By Lemma \ref{lemma:homologycommute}, the diagram commutes.  
\end{proof}

Thus, we have shown that there exists an $R^{*}$-action on the spectral sequence 
\begin{align*}
E^{1}_{p,q}(n) &= H_{q}(SO_{n,n}, C_{p}(n)) \Rightarrow H_{p+q}(SO_{n,n}, C_{*}(n))
\end{align*}
which induces the desired local actions considered previously. 
Putting everything together, we obtain the following.

\begin{corollary} \label{corollary:speciallocalisedE1}
For every $m \geq 1$ and every $q < m/2$, the localised spectral sequence 
\begin{align}
{}_{m}E^{1}_{p,q}(n) = s_{m}^{-1}E^{1}_{p,q}(n) \Rightarrow s_{m}^{-1}H_{p+q}(SO_{n,n}, C_{*}(n)) \label{ss:speciallocalised}
\end{align}
has $_{m}E^{1}_{p,q}$ terms 
\[
{}_{m}E^{1}_{p,q}(n) = s_{m}^{-1}H_{q}(SO_{n,n}, C_{p}(n)) \cong 
\begin{cases}
H_{q}(SO_{n-p, n-p}) & 0 \leq p < n \\
\mathbb{Z}[\mathbb{Z}_{2}] & p = n, q =0 \\
0 & p =n , q > 0.
\end{cases}
\] 
\end{corollary}
\qed

Our next task is to compute the localised $d^{1}$ differentials $d^{1}:  \,\,_{m}E^{1}_{p,q} \rightarrow \,\, _{m}E^{1}_{p-1,q}$. 
\section{Computation of the localised $d^{1}$ differentials, and proof of homological stability}
\begin{proposition} \label{prop:speciallocalisedd1}
For all $q < m/2$ and $0 \leq p < n$, 
the homomorphism $d^{1}_{p,q}$ is 
\[
d^1_{p,q}=
\begin{cases}
0, & p \,\, \text{even} \\
i_{*}, & p \,\, \text{odd},
\end{cases}
\]
where $i : SO_{n-p, n-p} \hookrightarrow SO_{n-p+1, n-p+1}$ denotes the inclusion. 
For $p = n$, the homomorphism $d^{1}_{n,q}$ is 0 if $q > 0$ or if $n$ is even; and for $q = 0$ and $n$ odd, $d^{1}_{n,0}$ is the augmentation map $\varepsilon: \mathbb{Z}[\mathbb{Z}_{2}] \rightarrow \mathbb{Z}$.
\end{proposition}
\begin{proof}
For all $p < n$, we want to show that the following diagram commutes:
\begin{equation}
\begin{tikzcd}
H_{q}(SO_{n-p,n-p}) \arrow[swap]{d}{i_{*}} \arrow{rrr}{(\iota,(e_{1}, \dots, e_{p}))_{*}}
  &&& H_{q}(SO_{n,n},C_{p}(n)) \arrow{d}{(d_{i})_{*}}  \\
H_{q}(SO_{n-p+1,n-p+1})  \arrow[swap]{rrr}{(\iota,(e_{1}, \dots, e_{p-1}))_{*}}
  &&& H_{q}(SO_{n,n}, C_{p-1}(n)),
\end{tikzcd}  \label{cd:computedifferentials}
\end{equation}
where $\iota : SO_{n-p, n-p} \hookrightarrow SO_{n,n}$ denotes the inclusion map; $(e_{1}, \dots, e_{p}) : 1 \mapsto (e_{1}, \dots, e_{p})$ and recall that $d_{i}(v_{1}, \dots, v_{p}) = (v_{1},\dots, \hat{v}_{i}, \dots, v_{p} )$.

The same proof as in Proposition \ref{prop:computelocalisedd1} will work, so long as we can show $\sigma$ has determinant 1. 
Note that permutation matrix $\sigma$ will be of the form 
\[
\sigma = (e_{1} \quad f_{1} \quad \cdots \quad e_{i-1}\quad f_{i-1}\quad e_{p} \quad f_{p} \quad e_{i} \quad f_{i} \cdots \quad e_{p-1} \quad f_{p-1}).
\]
From this, we may write $id = \sigma \cdot T_{1}\cdot T_{2}\cdots T_{2(p-i)}$, where the matrices $T_{i}$ are the elementary matrices needed to swap columns in $\sigma$ to transform it into the identity matrix. Note that each of these matrices has determinant $-1$, and there are an even number of such matrices.    

Thus, we deduce $1 = \det(id) = \det(\sigma \cdot T_{1}\cdot T_{2}\cdots T_{2(p-i)}) = (-1)^{2(p-i)}\det(\sigma) = \det(\sigma)$.

For $p = n$, it suffices to show that the following diagram commutes:
\[
\begin{tikzcd}
\mathbb{Z}[\mathbb{Z}_{2}] \arrow[swap]{d}{\varepsilon} \arrow{rrrrr}{1 \mapsto (e_{1}, \dots, e_{n}), \sigma \mapsto (e_{1}, \dots, f_{n}) }
  &&&&& C_{n}(n)_{SO_{n,n}} \arrow{d}{d_{i}}  \\
\mathbb{Z} \arrow[swap]{rrrrr}{1 \mapsto (e_{1}, \dots, e_{n-1})}
  &&&&& C_{n-1}(n)_{SO_{n,n}}.
\end{tikzcd}  
\]
For $i = n$, it is easy to see by inspection that the diagram commutes. 

For $1 \leq i < n$, commutativity follows from the fact that $SO_{n,n}$ acts transitively on $\mathcal{IU}_{n-1}(R^{2n})$.

%note that $(e_{1}, \dots , \hat{e}_{i}, \dots, e_{n}) = (e_{1}, \dots, e_{n-1})$ in $C_{n-1}(n)_{SO_{n,n}}$ as $A(e_{1}, \dots , \hat{e}_{i}, \dots, e_{n}) = e_{1}, \dots, e_{n-1})$, $A$ as in the proof of \ref{prop:computelocalisedd1}. Furthermore, $(e_{1}, \dots , \hat{e}_{i}, \dots, e_{n}) = (e_{1}, \dots , \hat{e}_{i}, \dots, f_{n}) $ in $C_{n-1}(n)_{SO_{n,n}}$ as the matrix $B \in O_{n,n}$ given by 
%\[
%B = (e_{1} \quad f_{1} \quad \dots \quad e_{i-1} \quad f_{i-1} \quad f_{i} \quad e_{i} \quad e_{i+1} \quad f_{i+1} \quad \dots \quad e_{n-1} \quad f_{n-1} \quad f_{n} \quad e_{n})
%\]
%has determinant 1 and is such that $B(e_{1}, \dots , \hat{e}_{i}, \dots, e_{n}) = (e_{1}, \dots , \hat{e}_{i}, \dots, f_{n})$.

These diagrams still commutes after localisation, but now the horizontal arrows become the identification isomorphisms. 
\end{proof}

We need to prove the following:
\begin{proposition} \label{prop:specialzerodifferentials}
The differentials $d^{r}_{p,q}$ in Spectral Sequence (\ref{ss:speciallocalised}) are zero for $r \geq 2$ and $q < m/2$, $p \leq n$. Hence, for all $q < m/2$ and $p \leq n$, ${}_{m}E^{2}_{p,q} \cong {}_{m}E^{\infty}_{p,q}$.
\end{proposition}
\begin{proof}

For $n = 0,1$, the spectral sequence under consideration is located in columns 0 and 1. Therefore, the differentials $d^{r}$ for $r \geq 2$ are zero by dimension arguments. 

For $n \geq 2$, consider the homomorphism of complexes of $SO_{n-2,n-2}$-modules
\[
\tau : C_{*}(n-2)[-2] \rightarrow C_{*}(n).
\]
as defined in Proposition \ref{prop:zerodifferentials}. Note that the diagram 
\[
\begin{tikzcd}
(SO_{n-2,n-2}, C_{p-2}(n-2)) \arrow{r}{(i,\tau_{j})}
  & (SO_{n,n}, C_{p}(n)) \\
(SO_{n-2,n-2}, C_{p-2}(n-2)) \arrow{u}{(C_{B_{a}}, B_{a})} \arrow[swap]{r}{(i,\tau_{j})}
  & (SO_{n,n}, C_{p}(n)) \arrow[swap]{u}{(C_{B_{a}}, B_{a})},
\end{tikzcd}
\]
still commutes, so that we have an induced map on localised spectral sequences 
\[
{}_{m}\tau_{*} : {}_{m}\tilde{E} \rightarrow {}_{m}E.
\] 
The claim would then follow by induction on $r$ using the following lemma:
\begin{lemma}
The map ${}_{m}\tau_{*} : {}_{m}\tilde{E}^{1}_{p,q} \rightarrow {}_{m}E^{1}_{p,q}$ is the identity for all $q < m/2$ and $2 \leq p \leq n$.
\end{lemma}
\begin{proof}
For $2 \leq p < n$, the same proof as in Lemma \ref{lem:E1identity} works, as the matrices $A$ and $B$ in Lemma \ref{lem:E1identity} have determinant 1. Thus, we only need to consider the case $p = n$. 

It suffices to show that for $j = 0, 1, 2$, the following diagram commutes:
\[
\begin{tikzcd}
\mathbb{Z}[\mathbb{Z}_{2}] \arrow[swap]{d}{=} \arrow{rrrrr}{1 \mapsto (e_{1}, \dots, e_{n-2}), \sigma \mapsto (e_{1}, \dots, f_{n-2}) }
  &&&&& C_{n-2}(n-2)_{SO_{n-2,n-2}} \arrow{d}{\tau_{j}}  \\
\mathbb{Z}[\mathbb{Z}_{2}] \arrow[swap]{rrrrr}{1 \mapsto (e_{1}, \dots, e_{n}), \sigma \mapsto (e_{1}, \dots, f_{n}) }
  &&&&& C_{n}(n)_{SO_{n,n}}.
\end{tikzcd}
\]
For $j = 0$, the diagram is easily seen to commute by inspection. 

For $j = 1$, we have that 
\begin{align*}
A(e_{1}, \dots, e_{n}) &= (e_{1}, e_{2} - e_{1}, e_{3}, \dots, e_{n}) \\
A(e_{1}, \dots, f_{n}) &= (e_{1}, e_{2} - e_{1}, e_{3}, \dots, f_{n}),
\end{align*}
where $A \in SO_{n,n}$ is the matrix $A$ in the proof of Lemma \ref{lem:E1identity}. 

Similarly, for $j = 2$, 
we have that 
\begin{align*}
B(e_{1}, \dots, e_{n}) &= (e_{2}, e_{2} - e_{1}, e_{3}, \dots, e_{n}) \\
B(e_{1}, \dots, f_{n}) &= (e_{2}, e_{2} - e_{1}, e_{3}, \dots, f_{n}),
\end{align*}
where $B \in SO_{n,n}$ is the matrix $B$ in the proof of Lemma \ref{lem:E1identity}. 

Thus, the diagrams commute. These diagrams still commute after localisation, but now the horizontal maps become the identification isomorphisms. 
\end{proof}
This proves the lemma, and thus Proposition \ref{prop:specialzerodifferentials}.
\end{proof}

\begin{theorem}
Let $R$ be a commutative local ring with infinite residue field such that $2 \in R^{*}$. Then, the natural homomorphism 
\[
H_{k}(SO_{n,n}(R)) \longrightarrow H_{k}(SO_{n+1,n+1}(R))
\]
is an isomorphism for $k \leq n-1$ and surjective for $k \leq n$. 
\end{theorem}
\begin{remark}
This is the first known homological stability result for $SO_{n,n}$ over a local ring and generalises the result obtained by Essert \cite{essert2013homological} for infinite fields. 
\end{remark}
\begin{proof}
Choose $m > 0$ sufficiently large. 
We have a Spectral Sequence (\ref{ss:speciallocalised}) with $E^{1}$-terms given by Corollary \ref{corollary:speciallocalisedE1} and $d^{1}_{p,q}$ was computed for all $q < m/2$ in Proposition  \ref{prop:speciallocalisedd1}. From Theorem \ref{thm:C*acyclic}, Spectral Sequences (\ref{ss:specialfirsthyperhomology}) and (\ref{ss:speciallocalised}) and Proposition \ref{prop:specialzerodifferentials}, we deduce ${}_{m}E^{2}_{p,q} = {}_{m}E^{\infty}_{p,q}$ for all $p+q \leq n-1$ and $q < m/2$. The theorem follows. 
\end{proof}

\section{Homological stability for $EO_{n,n}$ and $\Spin_{n,n}$} \label{sec:elementaryhomstability}
We define $EO_{n,n}$ as follows. 
\begin{definition}
Define $EO_{n,n}$ to be the image of the map 
\[
EO_{n,n} := \Ima(\pi: \Spin_{n,n} \longrightarrow SO_{n,n}),
\]
where $\pi: \Spin_{n,n} \longrightarrow SO_{n,n}$ is the canonical map from the Spin group into the special orthogonal group. 
\end{definition}

From this definition, we see that $EO_{n,n}$ sits inside short exact sequence 
\[
1 \rightarrow \mathbb{Z}_{2} \rightarrow \Spin_{n,n} \xrightarrow{\pi} EO_{n,n} \rightarrow 1.
\]

We refer the reader to the appendix for more information about the Spin group and the exact sequence above. 

In this section, we will study the homological stability of $EO_{n,n}$. We will then apply the relative Hochschild-Serre Spectral Sequence to the above short exact sequence. This will then give us a homological stability result for $\Spin_{n,n}$. 

\begin{remark}
The given definition of the elementary group $EO_{n,n}$ coincides with the one presented in the introduction as stated in \cite[Theorem 9.2.8]{hahn1989classical}.
\end{remark}

\begin{remark}
Note that \cite[Theorem 9.2.8]{hahn1989classical} as stated is true for $n \geq 2$. For $n = 1$, we use the convention that $EO_{1,1}(R) = R^{*2}$, so that the above short exact sequence is still true. 
\end{remark}

To prove homological stability of $EO_{n,n}$, we will study the hyperhomology spectral sequences  
\begin{align}
E^{2}_{p,q}(n) &= H_{p}(EO_{n,n}, H_{q}(C_{*}(n))) \Rightarrow H_{p+q}(EO_{n,n}, C_{*}(n)) \label{ss:elementaryfirsthyperhomology} \\
E^{1}_{p,q}(n) &= H_{q}(EO_{n,n}, C_{p}(n)) \Rightarrow H_{p+q}(EO_{n,n}, C_{*}(n)).  \label{ss:elementarysecondhyperhomology}
\end{align}
As the action of $EO_{n,n}$ on $\mathcal{IU}_{p}(R^{2n})$ is \textit{transitive} only for $p < n$, see Lemma \ref{lemma:elementarytransitive},  it is reasonable to expect that the analysis for the $EO_{n,n}$ case should be similar to the $SO_{n,n}$ case. This is indeed what happens.

By Theorem \ref{theorem:shortexactsequences}, $EO_{n,n}$ also sits inside the short exact sequence 
\[
1 \rightarrow EO_{n,n} \rightarrow SO_{n,n} \xrightarrow{\theta} R^{*}/R^{*2} \rightarrow 1,
\]
where the first arrow is the inclusion and the second arrow $\theta$ is the \textit{spinor norm} (this short exact sequence is also true for $n=1$, given our convention $EO_{1,1}(R) = R^{*2}$). We refer the reader to the Appendix \ref{appendix:spinornorm} for more details about the spinor norm. See also \cite{scharlau2012quadratic} and \cite{hahn1989classical} as additional references. From this short exact sequence, we have the inclusion $[SO_{n,n}, SO_{n,n}] \subseteq EO_{n,n}$, where $[SO_{n,n}, SO_{n,n}]$ denotes the commutator subgroup of $SO_{n,n}$. Therefore, to prove a matrix is in $EO_{n,n}$, it will be sufficient to prove it is in $[SO_{n,n}, SO_{n,n}] $. This will be very convenient for us. 

\section{Transitivity and local $R^{*}$-actions}
We want to prove that the canonical action of $EO_{n,n}$ on $\mathcal{IU}_{p}(R^{2n})$ is transitive for all $p < n$. 
\begin{lemma} \label{lemma:elementarytransitive}
The action of $EO_{n,n}$ on $\mathcal{IU}_{p}(R^{2n})$ is transitive for all $p < n$.
\end{lemma}
\begin{proof}
Let $(v_{1}, \dots, v_{p}) \in \mathcal{IU}_{p}(R^{2n})$. It suffices to show that there exists an $A \in EO_{n,n}$ such that $A(e_{1}, \dots, e_{p}) = (v_{1}, \dots, v_{p})$. 

We know by Lemma \ref{lemma:specialtransitive} that the action of $SO_{n,n}$ is transitive for all $p < n$. Therefore, there exists a $B \in SO_{n,n}$ such that $B(e_{1}, \dots, e_{p}) = (v_{1}, \dots, v_{p})$. 

Furthermore, note that we have surjections $ST_{p} = \text{Stab}_{SO_{n,n}}(e_{1},\dots ,e_{p}) \twoheadrightarrow SO_{n-p, n-p}$ and $SO_{n-p,n-p} \twoheadrightarrow R^{*}/R^{*2}$. 

Therefore, we deduce that there exists a $C \in ST_{p} \leq SO_{n,n}$ such that $\theta(BC) = \theta(B)\theta(C) = 1$ and $BC(e_{1}, \dots, e_{p}) = B(e_{1}, \dots, e_{p}) = (v_{1}, \dots, v_{p})$. 

As $\theta(BC) = 1$, we have that $BC \in \ker(\theta) = EO_{n,n}$. We may thus set $A := BC$.
\end{proof}

Define $ET_{k} := \text{Stab}_{EO_{n,n}}(e_{1}, \dots, e_{k})$. Note that $ET_{k}$ is precisely $\ker(ST_{k} \xrightarrow{\theta} R^{*}/R^{*2})$. This gives us the following diagram with exact rows:
\[
\begin{tikzcd}
1 \arrow{r}
& ET_{k} \arrow[dashed]{d} \arrow{r}
& ST_{k} \arrow[two heads]{d}{\rho} \arrow{r}{\theta}
& R^{*}/R^{*2} \arrow{d}{=} \arrow{r}
& 1 \\
1 \arrow{r}
&EO_{n-k,n-k}  \arrow{r}  
& SO_{n-k,n-k}  \arrow{r}{\theta}
& R^{*}/R^{*2} \arrow{r}
&1
\end{tikzcd},
\]
where the existence of the dashed arrow for all $n \geq 2$ will follow if we can show that the right square commutes (the map trivially exists for $n=1$). Let us prove this. 
\begin{proposition}
For every $n \geq 2$, the square 
\[
\begin{tikzcd}
ST_{k} \arrow[two heads]{d} \arrow{r}{\theta}
&R^{*}/R^{*2} \arrow{d}{=}  \\
SO_{n-k,n-k}  \arrow{r}{\theta} 
& R^{*}/R^{*2}
\end{tikzcd}
\]
commutes.
\end{proposition}
\begin{proof}
Let $n \geq 2$. Recall that $[SO_{n,n}, SO_{n,n}] \subseteq EO_{n,n}$. 
Let $A \in ST_{k}$ such that $\rho(A) = B$. Want to show $\theta(A) = \theta 
\left(\begin{pmatrix}
1_{2k} & \\
& B
\end{pmatrix}\right)
$.
Equivalently, want to show $\theta \left( 
\begin{pmatrix}
1_{2k} & \\
& B^{-1}
\end{pmatrix}
A
 \right) = 1$.
Therefore, we want to show that $\begin{pmatrix}
1_{2k} & \\
& B^{-1}
\end{pmatrix}
A \in [SO_{n,n}, SO_{n,n}] \subseteq EO_{n,n}$. 

Note that $\begin{pmatrix}
1_{2k} & \\
& B^{-1}
\end{pmatrix}
A \in L_{k}$, therefore it suffices to prove that $L_{k} \subseteq [SO_{n,n}, SO_{n,n}]$.

The inclusion $L_{k} \hookrightarrow SO_{n,n}$ induces a map on homology $H_{q}(L_{k}) \rightarrow H_{q}(SO_{n,n})$. We claim this is the zero map for every $q \geq 1$.

Recall from Lemma \ref{lemma:kill} that $s_{m}^{-1}H_{q}(L_{k}) = 0$ for every $1 \leq 2q < m$ and note that $s_{m}^{-1}H_{q}(SO_{n,n}) = H_{q}(SO_{n,n})$, as the $R^{*}$-action defining this localization is trivial on $H_{q}(SO_{n,n})$. 

Taking $m$ sufficiently large, we obtain for every $q \geq 1$ commutative diagrams 
\[
\begin{tikzcd}
H_{q}(L_{k}) \arrow[two heads]{d} \arrow{r}
&H_{q}(SO_{n,n}) \arrow[two heads]{d}{=}  \\
s_{m}^{-1}H_{q}(L_{k})  = 0  \arrow{r}
& s_{m}^{-1}H_{q}(SO_{n,n}) = H_{q}(SO_{n,n})
\end{tikzcd}.
\]
We therefore deduce that $H_{q}(L_{k}) \rightarrow H_{q}(SO_{n,n})$ is the zero map for every $q \geq 1$.
In particular, as $H_{1}$ corresponds to taking abelianization, we have that the diagram  
\[
\begin{tikzcd}
L_{k} \arrow[two heads]{d} \arrow[hook]{r}
&SO_{n,n} \arrow[two heads]{d} \\
L_{k}/\text{[}L_{k}, L_{k}\text{]}  \arrow{r}{0} 
& SO_{n,n}/\text{[}SO_{n,n}, SO_{n,n}\text{]} 
\end{tikzcd}
\]
commutes.
Therefore, we conclude that the inclusion $L_{k} \hookrightarrow SO_{n,n}$ factors through $[SO_{n,n}, SO_{n,n}]$. 
\end{proof}
Thus, the projection map $\rho: ST_{k} \twoheadrightarrow SO_{n-k, n-k}$ induces a map $\rho: ET_{k} \twoheadrightarrow EO_{n-k, n-k}$. Moreover, the above proof shows that $ET_{n} = ST_{n} =T_{n}$ and $L_{k} = \ker\left(\rho: ET_{k} \twoheadrightarrow EO_{n-k, n-k}\right)$, so that we have short exact sequence 
\[
1 \rightarrow L_{k} \rightarrow ET_{k} \rightarrow EO_{n-k,n-k} \rightarrow 1.
\]
The associated Hochschild-Serre Spectral Sequence is 
\[
E^{2}_{p,q} = H_{p}(EO_{n-k,n-k};H_{q}(L_{k})) \Rightarrow H_{p+q}(ET_{k}).
\]
Knowing that $ET_{n} = ST_{n} = T_{n}$ allows us to prove the following proposition:
\begin{proposition} \label{prop:elementaryorbits}
For $n \geq 1$, the action of $EO_{n,n}$ on $\mathcal{IU}_{n}(R^{2n})$ has orbits corresponding to $R^{*}/R^{*2} \times \mathbb{Z}_{2}$. 
\end{proposition}
\begin{proof} 
We know by Lemma \ref{lem:hyperbolicbasis} that the action of $O_{n,n}$ on $\mathcal{IU}_{n}(R^{2n})$ is transitive, so that we have an isomorphism of $O_{n,n}-$sets
\[
O_{n,n}/T_{n} \cong \mathcal{IU}_{n}(R^{2n}).
\]
Furthermore, note that $T_{n} = ET_{n} \leq EO_{n,n} \leq O_{n,n}$, so that we have a canonical surjection $O_{n,n}/T_{n} \rightarrow O_{n,n}/EO_{n,n}$ with fibre $EO_{n,n}/T_{n}$. Therefore, we have an isomorphism of $O_{n,n}$-sets $\mathcal{IU}_{n}(R^{2n})/EO_{n,n} \cong O_{n,n}/EO_{n,n}$. This gives us
\[
\left| \mathcal{IU}_{n}(R^{2n})/EO_{n,n} \right| = \left| O_{n,n}/EO_{n,n} \right| = \left| R^{*}/R^{*2} \times \mathbb{Z}_{2} \right|,
\]
where the last equality follows from the short exact sequence 
\begin{align}
1 \rightarrow EO_{n,n} \rightarrow O_{n,n} \rightarrow  R^{*}/R^{*2} \times \mathbb{Z}_{2} \rightarrow 1,\label{ses:EO_{n,n}O_{n,n}}
\end{align}
see Theorem \ref{theorem:shortexactsequences}.
\end{proof}

\subsection{The local $R^{*}$-action}
Note that for every $a \in R^{*}$, 
\[
\theta(D_{a^{2}}) = \theta \left(
\begin{pmatrix}
a^{2} & 0 \\
0 & a^{-2}
\end{pmatrix} 
\right) = 1,
\]
so that 
\[
D_{a^{2},k} := 
\begin{pmatrix}
D_{a^{2}} \\
& \ddots \\
&&D_{a^{2}} \\
&&&1_{2n-2k}
\end{pmatrix} \in EO_{n,n}.
\]
We will define the local action of $R^{*}$ on $ET_{k}$ to be the conjugation action of $D_{a^{2},k}$ on $ET_{k}$.

Replacing every unit by its square where necessary in the proof of Lemma \ref{lemma:kill} shows that:
\begin{corollary} \label{corollary:elementarystabiso}
The inclusion $EO_{n-k,n-k} \hookrightarrow ET_{k}$ induces isomorphisms 
\[
H_{t}(EO_{n-k,n-k}) \xrightarrow{\cong} s_{m}^{-1}H_{t}(ET_{k})
\]
for all $t < m/2$.
\end{corollary}

\section{A global action on the $EO_{n,n}$ spectral sequence}
As before, we want to realise these local actions as a global action on the spectral sequence 
\begin{align}
E^{1}_{p,q}(n) = H_{q}(EO_{n,n}, C_{p}(n)) \Rightarrow H_{p+q}(EO_{n,n}, C_{*}(n)).\label{ss:elementarysecondhyperhomology}
\end{align}
Again, we do this by defining an action on the associated exact couple with abutment. Specifically, the spectral sequence 
\[
E^{1}_{p,q} = H_{q}(EO_{n,n}, C_{p}(n)) \Rightarrow H_{p+q}(EO_{n,n}, C_{*}(n)) 
\]
may be obtained from the exact couple with abutment 
\begin{equation}
\begin{tikzcd}
\bigoplus_{p,q} E^{1}_{p,q} \arrow{r}{k} 
  & \bigoplus_{p,q} D^{1}_{p,q} \arrow{d}{i} \arrow{r}{\sigma} 
    & \bigoplus_{p+q} A_{p+q}  \\
    & \bigoplus_{p,q} D^{1}_{p,q} \arrow{lu}{j} \arrow{ru}{\sigma}
\end{tikzcd} \label{exactcouple:elementaryhyperhomologyII}
\end{equation}
with $E^{1}_{p,q} = H_{p+q}(EO_{n,n}, C_{\leq p}(n)/C_{\leq p-1}(n))$; $D^{1}_{p,q} = H_{p+q}(EO_{n,n}, C_{\leq p}(n))$; $A_{p+q} = H_{p+q}(EO_{n,n},C_{*}(n))$; the maps $i, j ,k$ being the maps of the long exact sequence of homology groups associated to the short exact sequence of complexes 
\[
0 \rightarrow C_{\leq p-1}(n) \rightarrow C_{\leq p}(n) \rightarrow C_{\leq p}(n)/C_{\leq p-1}(n) \rightarrow 0,
\]
and $\sigma$ is induced by the inclusion.

For $a \in R^{*}$, we define the global action on Spectral Sequence (\ref{ss:elementarysecondhyperhomology}) to be the action induced by the map $(C_{B_{a^{-4}}},B_{a^{-4}})$ on exact couple (\ref{exactcouple:elementaryhyperhomologyII}), where $C_{B_{a^{-4}}}$ denotes conjugation by the matrix $B_{a^{-4}}$ of Section \ref{subsubsection:globalaction}, and $B_{a^{-4}}$ also refers to multiplication by this matrix. 

\begin{proposition}
%\label{prop:ActionBa2Phia}
Let $k,q \geq 0$ and $n\geq 1$. Then, for all $a \in R^{*}$, the following diagram commutes:
\[
\begin{tikzcd}
H_{q}(EO_{n,n}, C_{k}(n)) \arrow{rr}{(C_{B_{a^{-4}}},B_{a^{-4}})_{*}}
  && H_{q}(EO_{n,n}, C_{k}(n)) \\
H_{q}(EO_{n,n},C_{k}(n)) \arrow{u}{id} \arrow[swap]{rr}{(id, \phi_{a^{2}})_{*}}
  && H_{q}(EO_{n,n},C_{k}(n)) \arrow[swap]{u}{id},
\end{tikzcd}
\]
where for $a \in R^{*}$, the map 
\[
(id, \phi_{a^{2}}): (EO_{n,n}, C_{k}(n)) \rightarrow (EO_{n,n}, C_{k}(n))
\]
is defined to be the identity on $EO_{n,n}$ and on basis elements of $C_{k}(n)$ as 
\[
\phi_{a^{2}} : (v_{1}, \dots , v_{k}) \mapsto (a^{-2}v_{1}, \dots, a^{-2}v_{k}).
\]
\end{proposition}

\begin{proof}
We use Lemma \ref{lemma:homologycommute}. Specifically, consider the diagram 
\[
(EO_{n,n},C_{k}(n)) \stackrel[(f_{2}, \varphi_{2})]{(f_{1}, \varphi_{1})}\rightrightarrows(EO_{n,n},C_{k}(n))
\]
where $(f_{1}, \varphi_{1}) := (id, \phi_{a^{2}})$ and $(f_{2}, \varphi_{2}) := (C_{B_{a^{-4}}}, B_{a^{-4}})$. 
Define
\[
\kappa := D_{a^{2},n} =
 \begin{pmatrix}
a^{2} \\
& a^{-2}\\
&&\ddots \\
&&& a^{2} \\
&&&& a^{-2}
\end{pmatrix}.
\]
Denoting for $a \in R^{*}$, 
\[
\underline{a} :=  \begin{pmatrix}
a \\
& a\\
&&\ddots \\
&&& a \\
&&&& a
\end{pmatrix},
\]
note that $B_{a^{-4}} = \kappa \underline{a}^{-2}$, so that $C_{B_{a^{-4}}} = C_{\kappa}C_{\underline{a}^{-2}}$. But, $C_{\underline{a}^{-2}} = id$, so that $C_{B_{a^{-4}}} = C_{\kappa}$. 
Furthermore, note that for every $(v_{1}, \dots, v_{k}) \in \mathcal{IU}_{k}(R^{2n})$, $B_{a^{-4}}(v_{1}, \dots, v_{k}) = \kappa \phi_{a^{2}} (v_{1}, \dots, v_{k})$, since $B_{a^{-4}} = \kappa \underline{a}^{-2}$. Thus, by Lemma \ref{lemma:homologycommute}, the diagram commutes. 
\end{proof}

\begin{proposition}
%\label{prop:ActionDaPhia}
Let $q \geq 0$ and $0 \leq k < n$. Then, for all $a \in R^{*}$, the following diagram commutes:
\[
\begin{tikzcd}
H_{q}(EO_{n,n}, C_{k}(n)) \arrow{r}{(id,\phi_{a^{2}})_{*}}
  & H_{q}(EO_{n,n}, C_{k}(n)) \\
H_{q}(ET_{k},\mathbb{Z}) \arrow{u}{(i,(e_{1}, \dots, e_{k}))_{*}}[swap]{\cong} \arrow[swap]{r}{C_{D_{a^{2},k}}}
  & H_{q}(ET_{k},\mathbb{Z}) \arrow[swap]{u}{(i,(e_{1}, \dots, e_{k}))_{*}}[swap]{\cong},
\end{tikzcd}
\]
where the vertical arrows are the isomorphisms given by Shapiro's Lemma and the map $C_{D_{a^{2},k}}$ denotes the map induced by conjugation with the element $D_{a^{2},k}$ on the stabiliser $ET_{k}$.
\end{proposition}
\begin{proof}
We use Lemma \ref{lemma:homologycommute}. Specifically, we have to consider the diagram 
\[
(ET_{k},\mathbb{Z}) \stackrel[(f_{2}, \varphi_{2})]{(f_{1}, \varphi_{1})}\rightrightarrows(EO_{n,n},C_{k}(n))
\]
where $(f_{1}, \varphi_{1}) := (i, (a^{-2}e_{1}, \dots, a^{-2}e_{k}))$ and $(f_{2}, \varphi_{2}) := (iC_{D_{a^{2},k}}, (e_{1}, \dots, e_{k}))$,
and $i:ET_k \to EO_{n,n}$ is the natural inclusion of groups.
Let $\kappa = D_{a^{2},k} \in EO_{n,n}$. 
Then, for every $A \in T_{k}$, 
$$
f_{2}(A) = iC_{D_{a^{2},k}}(A) 
= D_{a^{2},k}A D_{a^{2},k}^{-1} 
= D_{a^{2},k}i(A) D_{a^{2},k}^{-1} 
= \kappa f_{1}(A) \kappa^{-1}
$$
and
\[
(e_{1}, \dots, e_{k}) = D_{a^{2},k}(a^{-2}e_{1}, \dots, a^{-2}e_{k}) = \kappa (a^{-2}e_{1}, \dots, a^{-2}e_{k}).
\]
By Lemma \ref{lemma:homologycommute}, the diagram commutes.  
\end{proof}

Furthermore, we need to compute $H_{q}(EO_{n,n},C_{n}(n))$ and show that, \textit{after localisation}, they vanish for all $q >0$. 
\begin{proposition} \label{prop:$H_{q}(EO_{n,n},C_{n}(n))$}
For every $m\geq 1$ and $q < m/2$, we have 
\[
s_{m}^{-1}H_{q}(EO_{n,n},C_{n}(n)) \cong 
\begin{cases}
\mathbb{Z}[R^{*}/R^{*2} \times \mathbb{Z}_{2}] & q = 0 \\
0 & q > 0.
\end{cases}
\]
\end{proposition}
\begin{proof}
We have isomorphisms 
\begin{align*}
H_{q}(EO_{n,n},\mathbb{Z}[\mathcal{IU}_{n}]) &\cong \Tor_{q}^{EO_{n,n}}(\mathbb{Z}, \mathbb{Z}[\mathcal{IU}_{n}])\\
&\cong \Tor_{q}^{O_{n,n}}(\mathbb{Z}[R^{*}/R^{*2} \times \mathbb{Z}_{2}],\mathbb{Z}[\mathcal{IU}_{n}]) \\
&\cong H_{q}(\mathbb{Z}[R^{*}/R^{*2} \times \mathbb{Z}_{2}] \otimes_{O_{n,n}}^{\mathbb{L}}\mathbb{Z}[\mathcal{IU}_{n}]) \\
&\cong H_{q}(\mathbb{Z}[R^{*}/R^{*2} \times \mathbb{Z}_{2}] \otimes_{O_{n,n}}^{\mathbb{L}}\mathbb{Z}[O_{n,n}/T_{n}]) \\
&\cong H_{q}(\mathbb{Z}[R^{*}/R^{*2} \times \mathbb{Z}_{2}] \otimes_{T_{n}}^{\mathbb{L}}\mathbb{Z}) \\
&\cong H_{q}(T_{n},\mathbb{Z}[R^{*}/R^{*2} \times \mathbb{Z}_{2}])\\
&\cong \mathbb{Z}[R^{*}/R^{*2} \times \mathbb{Z}_{2}] \otimes_{\mathbb{Z}}H_{q}(T_{n}, \mathbb{Z}), 
\end{align*}
where we have used short exact sequence (\ref{ses:EO_{n,n}O_{n,n}}), transitivity of the $O_{n,n}$-action, the identity $\mathbb{Z}[G/N] \cong \mathbb{Z} \otimes^{\mathbb{L}}_{N}\mathbb{Z}[G]$ for $N$ a subgroup of a group $G$ and the Universal Coefficient Theorem. 
We therefore want to show that the action kills the $H_{q}(T_{n})$ terms for all $q > 0$, whilst leaving the $\mathbb{Z}[R^{*}/R^{*2} \times \mathbb{Z}_{2}]$ term invariant. This will follow from the commutativity of the following two diagrams, which we state as lemmas. 
\begin{lemma}
The following diagram commutes:
\[
\begin{tikzcd}
\Tor_{q}^{EO_{n,n}}(\mathbb{Z}, \mathbb{Z}[\mathcal{IU}_{n}]) \arrow{d}[swap]{(i,i,id)_{*}}{\cong} \arrow{rr}{(id,id,\phi_{a^{2}})_{*}}
  && \Tor_{q}^{EO_{n,n}}(\mathbb{Z}, \mathbb{Z}[\mathcal{IU}_{n}]) \arrow{d}{(i,i,id)_{*}}[swap]{\cong} \\
\Tor_{q}^{O_{n,n}}(\mathbb{Z}[R^{*}/R^{*2} \times \mathbb{Z}_{2}], \mathbb{Z}[\mathcal{IU}_{n}]) \arrow{rr}[swap]{(id,id,\phi_{a^{2}})_{*}} 
 &&\Tor_{q}^{O_{n,n}}(\mathbb{Z}[R^{*}/R^{*2} \times \mathbb{Z}_{2}], \mathbb{Z}[\mathcal{IU}_{n}]),
\end{tikzcd}
\]
where the vertical maps are the isomorphisms given by short exact sequence (\ref{ses:EO_{n,n}O_{n,n}}); $i: \mathbb{Z} \hookrightarrow \mathbb{Z}[R^{*}/R^{*2} \times \mathbb{Z}_{2}]$ and $i : EO_{n,n} \hookrightarrow O_{n,n}$ denote the canonical inclusions and recall that $\phi_{a^{2}}: \mathbb{Z}[\mathcal{IU}_{n}] \rightarrow \mathbb{Z}[\mathcal{IU}_{n}]$ is the map defined on basis elements by $(v_{1}, \dots, v_{n}) \mapsto (a^{-2}v_{1}, \dots, a^{-2}v_{n})$. 
\end{lemma}
\begin{proof}
Easily seen by inspection. 
\end{proof}
\begin{lemma}
The following diagram commutes:
\[
\begin{tikzcd}
\Tor_{q}^{O_{n,n}}(\mathbb{Z}[R^{*}/R^{*2} \times \mathbb{Z}_{2}], \mathbb{Z}[\mathcal{IU}_{n}]) \arrow{rr}{(id,id,\phi_{a^{2}})_{*}}
  && \Tor_{q}^{O_{n,n}}(\mathbb{Z}[R^{*}/R^{*2} \times \mathbb{Z}_{2}], \mathbb{Z}[\mathcal{IU}_{n}])  \\
\Tor_{q}^{T_{n}}(\mathbb{Z}[R^{*}/R^{*2} \times \mathbb{Z}_{2}], \mathbb{Z}) \arrow{u}{(id,i,(e_{1}, \dots, e_{n}))_{*}}[swap]{\cong} \arrow{rr}[swap]{(D_{a^{2},n}^{-1},C_{D_{a^{2},n}},id)_{*}} 
 &&\Tor_{q}^{T_{n}}(\mathbb{Z}[R^{*}/R^{*2} \times \mathbb{Z}_{2}], \mathbb{Z}) \arrow{u}[swap]{(id,i,(e_{1}, \dots, e_{n}))_{*}}{\cong},
\end{tikzcd}
\]
where the vertical maps are the isomorphisms given by the transitivity of the $O_{n,n}$-action; $D_{a^{2}, n}^{-1}$ denotes the map induced right multiplication by $D_{a^{2}, n}^{-1} \in O_{n,n}$ and $C_{D_{a^{2},n}}$ denotes the map induced by conjugation with $D_{a^{2}, n}$. 
\end{lemma}
\begin{proof}
We use Lemma \ref{lemma:torcommute}. Specifically, consider the diagram 
\[
(\mathbb{Z}[R^{*}/R^{*2} \times \mathbb{Z}_{2}],T_{n},\mathbb{Z}) \stackrel[(f_{2}, \varphi_{2}, g_{2})]{(f_{1}, \varphi_{1},g_{1})}\rightrightarrows(\mathbb{Z}[R^{*}/R^{*2} \times \mathbb{Z}_{2}],O_{n,n},\mathbb{Z}[\mathcal{IU}_{n}])
\]
where $(f_{1}, \varphi_{1},g_{1}) := (id, i, (a^{-2}e_{1}, \dots, a^{-2}e_{n}))$ and $(f_{2}, \varphi_{2},g_{2}) := (D_{a^{2},n}^{-1}, iC_{D_{a^{2},n}}, (e_{1}, \dots, e_{n}))$. 
Let $\kappa:= D_{a^{2},n} \in O_{n,n}$. We have that $\varphi_{2} = \kappa \varphi_{1} \kappa^{-1}$; $g_{2} = \kappa g_{1}$ and $f_{2} = f_{1}\kappa^{-1}$, so that by Lemma \ref{lemma:torcommute}, the diagram commutes. 
\end{proof}
Note that in the previous lemma, $D_{a^{2},n} \in EO_{n,n}$, so that the action on $\mathbb{Z}[R^{*}/R^{*2} \times \mathbb{Z}_{2}]$ is \textit{trivial}. It follows therefore that the action on $ \mathbb{Z}[R^{*}/R^{*2} \times \mathbb{Z}_{2}] \otimes_{\mathbb{Z}}H_{q}(T_{n}, \mathbb{Z})$ is trivial on $ \mathbb{Z}[R^{*}/R^{*2} \times \mathbb{Z}_{2}]$ and is the action induced by conjugation by $D_{a^{2},n}$ on $H_{q}(T_{n})$. By Lemma \ref{lemma:kill} (using the fact that $T_{n} = L_{n}$), the proposition follows. 
\end{proof}

Thus, we have shown that there exists an $R^{*}$-action on the spectral sequence 
\begin{align*}
E^{1}_{p,q}(n) &= H_{q}(EO_{n,n}, C_{p}(n)) \Rightarrow H_{p+q}(EO_{n,n}, C_{*}(n))
\end{align*}
which induces the desired local actions considered previously. 
Using Corollary  \ref{corollary:elementarystabiso}  and Proposition \ref{prop:$H_{q}(EO_{n,n},C_{n}(n))$}, we obtain the following.
\begin{corollary} \label{corollary:elementarylocalisedE1} 
For every $m \geq 1$ and every $q < m/2$, the localised spectral sequence 
\begin{align}
{}_{m}E^{1}_{p,q}(n) = s_{m}^{-1}E^{1}_{p,q}(n) \Rightarrow s_{m}^{-1}H_{p+q}(EO_{n,n}, C_{*}(n)) \label{ss:elementarylocalised}
\end{align}
has $_{m}E^{1}_{p,q}$ terms 
\[
{}_{m}E^{1}_{p,q}(n) = s_{m}^{-1}H_{q}(EO_{n,n}, C_{p}(n)) \cong 
\begin{cases}
H_{q}(EO_{n-p, n-p}), & 0 \leq p < n \\
\mathbb{Z}[R^{*}/R^{*2} \times \mathbb{Z}_{2}], & p = n, q = 0 \\
0 & p = n, q > 0.  
\end{cases}
\] 
\end{corollary}\qed

Our next task is to compute the localised $d^{1}$ differentials $d^{1}:  \,\,_{m}E^{1}_{p,q} \rightarrow \,\, _{m}E^{1}_{p-1,q}$. 
\section{Computation of the localised $d^{1}$ differentials, and proof of homological stability}
\begin{proposition} \label{prop:elementarylocalisedd1}
For all $q < m/2$ and $0 \leq p < n$, 
the homomorphism $d^{1}_{p,q}$ is 
\[
d^1_{p,q}=
\begin{cases}
0, & p \,\, \text{even} \\
i_{*}, & p \,\, \text{odd},
\end{cases}
\]
where $i : EO_{n-p, n-p} \hookrightarrow EO_{n-p+1, n-p+1}$ denotes the inclusion. 
For $p = n$, the homomorphism $d^{1}_{n,q}$ is 0 if $q > 0$ or if $n$ is even; and for $q = 0$ and $n$ odd, $d^{1}_{n,0}$ is the augmentation map $\varepsilon: \mathbb{Z}[R^{*}/R^{*2} \times \mathbb{Z}_{2}] \rightarrow \mathbb{Z}$.
\end{proposition}

\begin{proof}
For all $p < n$, we want to show that the following diagram commutes:
\begin{equation}
\begin{tikzcd}
H_{q}(EO_{n-p,n-p}) \arrow[swap]{d}{i_{*}} \arrow{rrr}{(\iota,(e_{1}, \dots, e_{p}))_{*}}
  &&& H_{q}(EO_{n,n},C_{p}(n)) \arrow{d}{(d_{i})_{*}}  \\
H_{q}(EO_{n-p+1,n-p+1})  \arrow[swap]{rrr}{(\iota,(e_{1}, \dots, e_{p-1}))_{*}}
  &&& H_{q}(EO_{n,n}, C_{p-1}(n)),
\end{tikzcd} 
\end{equation}
where $\iota : EO_{n-p, n-p} \hookrightarrow EO_{n,n}$ denotes the inclusion map; $(e_{1}, \dots, e_{p}) : 1 \mapsto (e_{1}, \dots, e_{p})$ and recall that $d_{i}(v_{1}, \dots, v_{p}) = (v_{1},\dots, \hat{v}_{i}. \dots, v_{p} )$.
Suppose the matrix $A$ in the proof of Proposition \ref{prop:computelocalisedd1} has spinor norm $\theta(A) = a$. Note that, as $A = 
\begin{pmatrix}
\sigma & \\
& 1_{2(n-p)}
\end{pmatrix}$, it follows that $\theta(A) = \theta(\sigma)$.  If $a = 1$, we are done. Otherwise, define $\hat{A} \in O_{n,n}$ by sending a hyperbolic basis to a hyperbolic basis as follows:
\begin{align*}
(e_{1}, \dots, \hat{e}_{i}, \dots, e_{p}) &\mapsto (e_{1}, \dots, e_{p-1}) \\
(f_{1}, \dots, \hat{f}_{i}, \dots, f_{p}) &\mapsto (f_{1}, \dots, f_{p-1}) \\
e_{i} &\mapsto ae_{p} \\
f_{i} &\mapsto a^{-1}f_{p} \\
e_{j} \mapsto e_{j} \,\, \text{and} \,\, f_{j} \mapsto f_{j} & \,\, \text{for all} \,\, p+1\leq j \leq n. 
\end{align*}
Write $\hat{A} = 
\begin{pmatrix}
\hat{\sigma} & \\
& 1_{2(n-p)}
\end{pmatrix}$,
so that $\theta(\hat{A}) = \theta(\hat{\sigma})$. We prove that $\hat{\sigma} \in EO_{p,p}$. Indeed, this follows from the matrix equation 
\[
\hat{\sigma} = 
\begin{pmatrix}
1 \\
& \ddots \\
&& 1 \\
&&& a \\
&&&& a^{-1}
\end{pmatrix}\sigma.
\]
Clearly, we still have $(e_{1}, \dots, e_{p-1}) = \hat{A} (e_{1}, \dots, \hat{e}_{i}, \dots e_{p})$ and for every $B \in EO_{n-p, n-p}$, 
\[
\iota \circ i (B) = \hat{A} \iota (B) \hat{A}^{-1},
\]
so that by Lemma \ref{lemma:homologycommute}, the diagram commutes.

For $p = n$, it suffices to show that the diagram commutes:
\[
\begin{tikzcd}
\mathbb{Z}[R^{*}/R^{*2} \times \mathbb{Z}_{2}] \arrow[swap]{d}{\varepsilon} \arrow{rr}
  && C_{n}(n)_{EO_{n,n}} \arrow{d}{d_{i}}  \\
\mathbb{Z} \arrow[swap]{rr}{1 \mapsto (e_{1}, \dots, e_{n-1})}
  && C_{n-1}(n)_{EO_{n,n}},
\end{tikzcd}  
\]
where the top horizontal arrow maps a given basis element $x \in R^{*}/R^{*2} \times \mathbb{Z}_{2} $ to the element given by the isomorphism of $O_{n,n}$-sets $R^{*}/R^{*2} \times \mathbb{Z}_{2} \cong O_{n,n}/EO_{n,n} \cong \mathcal{IU}_{n}(R^{2n})/EO_{n,n}$, see Proposition \ref{prop:elementaryorbits}. But this follows from the fact that $EO_{n,n}$ acts transitively on $\mathcal{IU}_{n-1}(R^{2n})$. 
\end{proof}
In the proof of the next proposition, we will use the so called \textit{hyperbolic map}. 

\begin{definition}
Define the hyperbolic map as a group homomorphism $H:GL_{n}(R) \rightarrow O_{n,n}(R)$ given by 
\begin{align*}
H: GL_{n}(R) &\longrightarrow O_{n,n}(R) \\
A &\longmapsto 
\begin{pmatrix}
A & \\
& ^{t}(A^{-1})
\end{pmatrix}.
\end{align*}
\end{definition}
\begin{remark}
In the above definition, we have used the convention that $R^{2n}$ is equipped with symmetric bilinear form given by $\begin{pmatrix}
0 & I_{n} \\
I_{n} & 0
\end{pmatrix}$, and has ordered basis $e_{1}, \dots, e_{n}, f_{1}, \dots, f_{n}$, so that $\langle e_{i}, e_{j} \rangle = \langle f_{i}, f_{j} \rangle = 0$ and $\langle e_{i}, f_{j} \rangle = \delta_{ij}$. We have done this for the sake of notation.  It is clear that this convention differs from our usual convention up to matrix conjugation (by a suitable permutation matrix). We tacitly assume this whenever using the hyperbolic map. 
\end{remark}

We need to prove the following proposition:

\begin{proposition} \label{prop:elementaryzerodifferentials}
The differentials $d^{r}_{p,q}$ in Spectral Sequence (\ref{ss:elementarylocalised}) are zero for $r \geq 2$ and $q < m/2$, $p \leq n$. Hence, for all $q < m/2$ and $p \leq n$, ${}_{m}E^{2}_{p,q} \cong {}_{m}E^{\infty}_{p,q}$.
\end{proposition}
\begin{proof}
For $n = 0,1$, the spectral sequence under consideration is located in columns 0 and 1. Therefore, the differentials $d^{r}$ for $r \geq 2$ are zero by dimension arguments. 

For $n \geq 2$, consider the homomorphism of complexes of $EO_{n-2,n-2}$-modules
\[
\tau : C_{*}(n-2)[-2] \rightarrow C_{*}(n).
\]
as defined in Proposition \ref{prop:zerodifferentials}. Note that the diagram 
\[
\begin{tikzcd}
(EO_{n-2,n-2}, C_{p-2}(n-2)) \arrow{r}{(i,\tau_{j})}
  & (EO_{n,n}, C_{p}(n)) \\
(EO_{n-2,n-2}, C_{p-2}(n-2)) \arrow{u}{(C_{B_{a}}, B_{a})} \arrow[swap]{r}{(i,\tau_{j})}
  & (EO_{n,n}, C_{p}(n)) \arrow[swap]{u}{(C_{B_{a}}, B_{a})},
\end{tikzcd}
\]
still commutes, so that we have an induced map on localised spectral sequences 
\[
{}_{m}\tau_{*} : {}_{m}\tilde{E} \rightarrow {}_{m}E.
\] 
The claim would then follow by induction on $r$ using the following lemma:
\begin{lemma}
The map ${}_{m}\tau_{*} : {}_{m}\tilde{E}^{1}_{p,q} \rightarrow {}_{m}E^{1}_{p,q}$ is the identity for all $q < m/2$ and $2 \leq p \leq n$.
\end{lemma}
\begin{proof}
For $2 \leq p < n$, the same proof as in lemma \ref{lem:E1identity} will work, as long as the matrices $A$ and $B$ of Lemma \ref{lem:E1identity} are in $EO_{n,n}$.
Recall that $A \in O_{n,n}$ was defined by 
\begin{align*}
&e_{1} \mapsto e_{1} \\
&e_{2} \mapsto e_{2} - e_{1} \\
&f_{1} \mapsto f_{1} + f_{2} \\
&f_{2} \mapsto f_{2} \\ 
&e_{j} \mapsto e_{j} \,\, \text{for all} \,\, 3 \leq j \leq n \\
 &f_{j} \mapsto f_{j} \,\, \text{for all} \,\, 3 \leq j \leq n,
\end{align*}
and $B \in O_{n,n}$ was defined by
\begin{align*}
&e_{1} \mapsto e_{2} \\
&e_{2} \mapsto e_{2} - e_{1} \\
&f_{1} \mapsto f_{1} + f_{2} \\
&f_{2} \mapsto -f_{1} \\ 
&e_{j} \mapsto e_{j} \,\, \text{for all} \,\, 3 \leq j \leq n \\
 &f_{j} \mapsto f_{j} \,\, \text{for all} \,\, 3 \leq j \leq n.
\end{align*}
It suffices to prove that 
\[
M:= 
\begin{pmatrix}
1 & 0 & -1 & 0 \\
0 & 1 & 0 & 0 \\
0 & 0 & 1 & 0 \\
0 & 1 & 0 & 1
\end{pmatrix}
\]
and 
\[
N:= 
\begin{pmatrix}
0 & 0 & -1 & 0 \\
0 & 1 & 0 & -1 \\
1 & 0 & 1 & 0 \\
0 & 1 & 0 & 0
\end{pmatrix}
\]
are in $EO_{2,2}$. 
Note that the hyperbolic map $H : GL_{2}(R) \rightarrow O_{2,2}(R)$ is a \textit{group homomorphism}. In addition, note that $SL_{2}(R)$ is \textit{perfect}. For example, this follows from \cite[Theorem 4.3.9.]{hahn1989classical} and \cite[Lemma 3.8.]{schlichting2017euler}. Therefore, we deduce $H(SL_{2}(R)) \subseteq [SO_{2,2}(R), SO_{2,2}(R)] \subseteq EO_{2,2}(R)$. We then note that $M = H\left(
\begin{pmatrix}
1 & -1 \\
0 & 1
\end{pmatrix}\right)$ and $N = H\left(
\begin{pmatrix}
0 & -1 \\
1 & 1
\end{pmatrix}\right)$.

Finally, we need to consider the case $p = n$. It suffices to show that for $j = 0, 1, 2$, the following diagram commutes:
\[
\begin{tikzcd}
\mathbb{Z}[R^{*}/R^{*2} \times \mathbb{Z}_{2}] \arrow[swap]{d}{=} \arrow{r}
  & C_{n-2}(n-2)_{EO_{n-2,n-2}} \arrow{d}{\tau_{j}}  \\
\mathbb{Z}[R^{*}/R^{*2} \times \mathbb{Z}_{2}] \arrow[swap]{r}
  & C_{n}(n)_{EO_{n,n}},
\end{tikzcd}
\]
where the top and bottom horizontal arrows map a given basis element $x \in R^{*}/R^{*2} \times \mathbb{Z}_{2} $ to the element given by the isomorphism of $O_{n-2,n-2}$-sets $R^{*}/R^{*2} \times \mathbb{Z}_{2} \cong O_{n-2,n-2}/EO_{n-2,n-2} \cong \mathcal{IU}_{n-2}(R^{2(n-2)})/EO_{n-2,n-2}$ and isomorphism of $O_{n,n}$-sets $R^{*}/R^{*2} \times \mathbb{Z}_{2} \cong O_{n,n}/EO_{n,n} \cong \mathcal{IU}_{n}(R^{2n})/EO_{n,n}$ respectively, see Proposition \ref{prop:elementaryorbits}.

Under the isomorphism $R^{*}/R^{*2} \times \mathbb{Z}_{2} \cong \mathcal{IU}_{n-2}(R^{2(n-2)})/EO_{n-2,n-2}$, an element $x \in R^{*}/R^{*2} \times \mathbb{Z}_{2}$ is sent to the element $P(e_{1}, \dots, e_{n-2})$ for some $P \in O_{n-2,n-2}$, and under the isomorphism $R^{*}/R^{*2} \times \mathbb{Z}_{2} \cong \mathcal{IU}_{n}(R^{2n})/EO_{n,n}$, the same element $x \in R^{*}/R^{*2} \times \mathbb{Z}_{2}$ is sent to the element $\tilde{P}(e_{1}, \dots, e_{n})$, where $\tilde{P} := 
\begin{pmatrix}
1_{4} &  \\
& P 
\end{pmatrix}$.
Recalling that each $\tau_{j}$ is a map of $O_{n-2,n-2}$-modules, we have that 
\[
\tau_{j}(P(e_{1},\dots, e_{n-2})) = \tilde{P} \tau_{j}(e_{1}, \dots, e_{n-2}) = 
\begin{cases}
\tilde{P}(e_{1}, \dots, e_{n}), & j = 0 \\
\tilde{P}(e_{1}, e_{2}-e_{1}, e_{3}, \dots, e_{n}), & j = 1 \\
\tilde{P}(e_{2}, e_{2}-e_{1}, e_{3}, \dots, e_{n}), & j = 2.
\end{cases}
\]
Thus, for $j = 0$, the diagram commutes by inspection. For $j = 1$, we note that 
\[
A\tilde{P}(e_{1}, e_{2}, e_{3}, \dots, e_{n}) = \tilde{P}A(e_{1}, e_{2}, e_{3}, \dots, e_{n}) = \tilde{P}(e_{1}, e_{2}-e_{1}, e_{3}, \dots, e_{n}),
\]
since 
\[
A\tilde{P} = 
\begin{pmatrix}
M &  \\
& 1 
\end{pmatrix}
\begin{pmatrix}
1_{4} &  \\
& P 
\end{pmatrix} =
\begin{pmatrix}
1_{4} &  \\
& P 
\end{pmatrix}
\begin{pmatrix}
M &  \\
& 1 
\end{pmatrix}
= \tilde{P}A.
\]
Similarly, for $j = 2$, we note that 
\[
B\tilde{P}(e_{1}, e_{2}, e_{3}, \dots, e_{n}) = \tilde{P}B(e_{1}, e_{2}, e_{3}, \dots, e_{n}) = \tilde{P}(e_{2}, e_{2}-e_{1}, e_{3}, \dots, e_{n}),
\]
where $B$ and $\tilde{P}$ commute for similar reasons. 
Thus, the diagrams commute. These diagrams still commute after localisation, but now the horizontal maps become the identification isomorphisms. 
\end{proof}
This proves the lemma, and thus Proposition \ref{prop:elementaryzerodifferentials}.
\end{proof}
\begin{theorem} \label{thm:Ehstability}
Let $R$ be a commutative local ring with infinite field such that $2 \in R^{*}$. Then, the natural homomorphism 
\[
H_{k}(EO_{n,n}(R)) \longrightarrow H_{k}(EO_{n+1,n+1}(R))
\]
is an isomorphism for $k \leq n-1$ and surjective for $k \leq n$. 
\end{theorem}
\begin{remark}
This improves the range for homological stability given by Randal-Williams and Wahl in \cite[Theorem 5.16.]{randal2017homological} by a factor of 3.
\end{remark}
\begin{proof}
Choose $m > 0$ sufficiently large. 
We have a Spectral Sequence (\ref{ss:elementarylocalised}) with $E^{1}$-terms given by Corollary \ref{corollary:elementarylocalisedE1} and $d^{1}_{p,q}$ was computed for all $q < m/2$ in Proposition  \ref{prop:elementarylocalisedd1}. From Theorem \ref{thm:C*acyclic}, Spectral Sequences (\ref{ss:elementaryfirsthyperhomology}) and (\ref{ss:elementarylocalised}) and Proposition \ref{prop:elementaryzerodifferentials}, we deduce ${}_{m}E^{2}_{p,q} = {}_{m}E^{\infty}_{p,q}$ for all $p+q \leq n-1$ and $q < m/2$. The theorem follows. 
\end{proof}
\section{Homological Stability for $\Spin_{n,n}$}
Homological stability for $EO_{n,n}$ immediately gives homological stability for $\Spin_{n,n}$:
\begin{theorem}
Let $R$ be commutative local ring with infinite field such that $2 \in R^{*}$. Then, the natural homomorphism 
\[
H_{k}(\Spin_{n,n}(R)) \longrightarrow H_{k}(\Spin_{n+1,n+1}(R))
\]
is an isomorphism for $k \leq n-1$ and surjective for $k \leq n$. 
\end{theorem}
\begin{remark}
This coincides with the $H_{1}$-stability result for $\Spin_{n,n}$ in \cite[Theorem 9.1.15.]{hahn1989classical} and the $H_{2}$-stability result for $\Spin_{n,n}$ in \cite[Theorem 9.1.17, Theorem 9.1.19 and discussion thereafter]{hahn1989classical}. To our best knowledge, this is the first known homological stability result for $\Spin_{n,n}$ that accounts for all homology groups. 
\end{remark}
\begin{proof}
Immediate from Theorem \ref{thm:Ehstability} and the relative Hochschild-Serre Spectral Sequence 
\[
E^{2}_{p,q} = H_{p}(EO_{n,n}, EO_{n-1,n-1}; H_{q}(\mathbb{Z}_{2})) \Rightarrow H_{p+q}(\Spin_{n,n}, \Spin_{n-1,n-1})
\]
obtained from the short exact sequence $1 \rightarrow \mathbb{Z}_{2}\rightarrow \Spin_{n,n} \rightarrow EO_{n,n} \rightarrow 1$.
\end{proof}

\appendix

\section{Spin Groups and The Spinor Norm over a Local Ring} \label{appendix:spinornorm}
\subsection{Definitions, existence and basic properties}

To begin with, let $M = (M,q)$ be a non-singular quadratic module over a commutative ring $R$, which for the purposes of this paper, is such that $2 \in R^{*}$. We will call an element $x \in M$ \textit{anisotropic} if $q(x) \in R^{*}$. We define $b(x,y) = b_{q}(x,y) := \frac{1}{2}(q(x+y) - q(x) -q(y))$ to be the symmetric bilinear form associated to $q$. We will say that $x , y \in M$ are \textit{orthogonal} if $b(x,y) = 0$.
\begin{definition}
A pair $(A,f)$ consisting of an $R$-algebra $A$ and a homomorphism of $R$-modules $f: M \rightarrow A$ is said to be \textit{compatible} with $M$ if for every $x \in M$,
\[
f(x)^{2} = q(x)1_{A}.
\]
\end{definition}

\begin{definition}
A Clifford algebra of $M$ is a compatible pair $(\Cl(M), i)$ which satisfies the following universal property:

If $(A,f)$ is any pair which is compatible with $M$, then there exists a unique homomorphism of $R$-algebras $g: \Cl(M) \rightarrow A$ such that the diagram 
\[ 
\begin{tikzcd}[scale cd=1.1]
M \arrow{r}{i} \arrow[swap]{dr}{f} & \Cl(M) \arrow[dashed]{d}{g} \\
& A
\end{tikzcd}
\]
commutes.
\end{definition}
We establish that any quadratic module $M$ has a Clifford algebra:
\begin{theorem}
Let $M$ be a quadratic module over $R$. Then, $M$ has a Clifford algebra $(\Cl(M), i)$, which is unique up to unique isomorphism. 
\end{theorem}
\begin{proof}
The uniqueness statement follows from the universal property of the Clifford algebra, so it suffices to prove existence. 
We define 
\begin{align*}
M^{\otimes n} &:= M \otimes_{R} \dots \otimes_{R} M \quad (\text{$n$ times}) \quad \text{for $n>0$}, \\
M^{\otimes 0} &:= R, \\
M^{\otimes n} &:= 0 \quad \text{for $n<0$},
\end{align*}
and define 
\[
T(M) := \bigoplus_{n \in \mathbb{Z}} M^{\otimes n},
\]
the tensor algebra of $M$. Let $i: M \rightarrow T(M)$ denote the inclusion. 

Note that the tensor algebra $T(M)$ is a $\mathbb{Z}$-graded $R$-algebra, with product 
\[
(x_{1}\otimes \dots \otimes x_{m})(x_{m+1}\otimes \dots \otimes x_{n}) := (x_{1}\otimes \dots \otimes x_{m} \otimes x_{m+1}\otimes \dots \otimes x_{n}).
\]
Also note that $T(M)$ has the following universal property: If $A$ is a $R$-algebra and $f: M \rightarrow A$ is a $R$-module homomorphism, then there exists a unique $R$-algebra homomorphism $g : T(M) \rightarrow A$ such that the diagram
\[ 
\begin{tikzcd}[scale cd=1.1]
M \arrow{r}{i} \arrow[swap]{dr}{f} & T(M) \arrow[dashed]{d}{g} \\
& A
\end{tikzcd}
\]
commutes. Of course, $g$ is defined by 
\[
g(x_{1}\otimes \dots \otimes x_{n}) := f(x_{1})\dots f(x_{n}).
\]

Continuing with the construction, define $I(q)$ to be the two-sided ideal of $T(M)$ generated by the set 
\[
\{x \otimes x - q(x) | x \in M \}.
\]
We then define the quotient $R$-algebra 
\[
\Cl(M) := T(M)/I(q),
\]
and define $i: M \rightarrow \Cl(M)$ to be the canonical map. By construction, it is clear that $(\Cl(M), i)$ is a compatible pair, so it remains to check the universal property. 

Let $(A,f)$ be a pair compatible with $M$. By the universal property of $T(M)$, there exists an unique $R$-algebra homomorphism $g: T(M) \rightarrow A$ such that $gi = f$. Furthermore, note that 
\[
g(x \otimes x - q(x)) = g(x)^{2} - q(x) = f(x)^{2} - q(x) = q(x) - q(x) = 0. 
\]
Thus, $g$ factors through the quotient, to give a map $g: \Cl(M) \rightarrow A$. 
\end{proof}

\begin{remark}
If $x, y \in M$ are orthogonal, then in $\Cl(M)$, $xy = -yx$, as $0 = b(x,y) = q(x+y) - q(x) -q(y) = (x+y)^{2} -x^{2} -y^{2} = xy +yx$.
\end{remark}
\begin{remark}
The identity of $\Cl(M)$, denoted $1_{\Cl(M)}$, together with the elements $\{ i(x) | x \in M \}$, generate $\Cl(M)$ as an $R$-algebra. 
\end{remark}
\begin{remark}
The Clifford algebra $\Cl(M)$ is canonically a $\mathbb{Z}_{2}$-graded algebra, with the grading defined as follows:
We define $\Cl(M)_{0}$ be the submodule of $\Cl(M)$ spanned by $1_{\Cl(M)}$ and $\{ i(x_{i_{1}})\dots i(x_{i_{k}}) | k \,\, \text{even}\}$; and we define $\Cl(M)_{1}$ be the submodule of $\Cl(M)$ spanned by $\{ i(x_{i_{1}})\dots i(x_{i_{k}}) | k \,\, \text{odd}\}$. Clearly, $\Cl(M)_{0}$ is a subalgebra of $\Cl(M)$. 
\end{remark}
\begin{remark}
Consider the graded centre $Z_{gr}(\Cl(M))$ of the Clifford algebra $\Cl(M)$. This is defined to be the graded subspace of the Clifford algebra $\Cl(M)$ whose homogeneous elements $h(Z_{gr}(\Cl(M)))$ are determined by 
\[
c \in h(Z_{gr}(\Cl(M))) \Longleftrightarrow cs = -(1)^{\partial s \partial c} sc \quad \forall s \in h(\Cl(M)),
\]
where $h(\Cl(M))$ denotes the homogeneous elements of $\Cl(M)$ and $\partial$ denotes the degree of the homogeneous element. 

When $M$ is free of finite rank, we cite the following important structural result: 
\begin{lemma}\label{lemma:gradedcenter}
Let $M$ be a free non-singular quadratic module of finite rank. Then
\[
Z_{gr}(\Cl(M)) = R.
\]
\end{lemma}
\begin{proof}
See \cite[Theorem 7.1.11.]{hahn1989classical}.
\end{proof}
\end{remark}
\begin{remark}
By the universal property of the Clifford algebra, every $\sigma \in O(M)$ in the orthogonal group of $M$ uniquely determines an automorphism of $R$-algebras $\Cl(\sigma): \Cl(M) \rightarrow \Cl(M)$. This association gives rise to a group homomorphism 
\[
\Cl : O(M) \rightarrow \text{Aut}(\Cl(M)).
\]
In particular, taking $\sigma := -1_{M}$ provides a unique automorphism $\Cl(-1_{M}): \Cl(M) \rightarrow \Cl(M)$ such that $\Cl(-1_{M})(i(x)) = -i(x)$ for all $x \in M$. Observe that $\Cl(-1_{M})|_{\Cl(M)_{0}} = 1_{\Cl(M)_{0}}$ and $\Cl(-1_{M})|_{\Cl(M)_{1}} = -1_{\Cl(M)_{1}}$.

The map $\Cl(-1_{M})$ is used to define the so called `canonical involution'  $\overline{\phantom{x}}$ on $\Cl(M)$. 

But first, let $\Cl(M)^{op}$ denote the opposite algebra of $\Cl(M)$. By the universal property of the Clifford algebra, there exists an unique algebra homomorphism $\sim: \Cl(M) \rightarrow \Cl(M)^{op}$ such that the diagram 
\[ 
\begin{tikzcd}[scale cd=1.1]
M \arrow{r}{i} \arrow[swap]{dr}{i} & \Cl(M) \arrow[dashed]{d}{\sim} \\
& \Cl(M)^{op}
\end{tikzcd}
\]
commutes. We will consider $\sim$ as a map $\sim: \Cl(M) \rightarrow \Cl(M)$. Note that $\widetilde{cd} = \tilde{d}\tilde{c}$ for every $c, d \in \Cl(M)$, and $\widetilde{i(x)} = i(x)$ for every $x \in M$, so that $\tilde{\tilde{c}} = c$ for every $c \in \Cl(M)$ and $\sim$ is therefore an involution on $\Cl(M)$. 

We then define the \textit{canonical involution} $\overline{\phantom{x}}: \Cl(M) \rightarrow \Cl(M)$ to be the composite 
\[
\Cl(M) \xrightarrow{\Cl(-1)} \Cl(M) \xrightarrow{\sim} \Cl(M).
\] 
One easily checks that this does indeed define an involution on $\Cl(M)$. Observe that $\overline{\phantom{x}}$ is the unique $R$-linear anti-automorphism of $\Cl(M)$ which satisfies $\overline{i(x)} = -i(x)$ for every $x \in M$. We will use the canonical involution in our definition of the $\Spin$ group. 
\end{remark}
\subsection{The groups $\Gamma(M), S\Gamma(M)$, $\Spin(M)$ and the Spinor Norm}
We define the groups $\Gamma(M), S\Gamma(M)$ and $\Spin(M)$. We also define the Spinor norm and study some of its basic properties, as needed in this paper. Unless stated otherwise, our exposition will closely follow \cite[Chapter 7]{hahn1989classical}. 
\subsubsection{The Groups $\Gamma(M), S\Gamma(M)$, $\Spin(M)$}
\begin{definition}
We define the \textit{Clifford group} $\Gamma(M)$ to be the group
\[
\Gamma(M) := \{c \in \Cl(M)^{*} | cMc^{-1} = M \}. 
\]
\end{definition}
Note that for every $c \in \Gamma(M)$, we canonically obtain a map 
\begin{align*}
\pi c&: M \rightarrow M \\
(\pi c)(x) &:= cxc^{-1}.
\end{align*}
Furthermore, note that $\pi c$ preserves the quadratic form $q$ as $q(\pi c(x)) = q(cxc^{-1}) = cxc^{-1}\otimes cxc^{-1} = cx^{2}c^{-1} = q(x)$. Thus, the assignment $c \mapsto \pi c$ defines a group homomorphism 
\[
\pi : \Gamma(M) \rightarrow O(M).
\]
\begin{definition}
We define the \textit{Special Clifford group} $S\Gamma(M)$ to be the group
\[
S\Gamma(M) := \{c \in \Cl(M)_{0}^{*} | cMc^{-1} = M \}. 
\]
\end{definition}
Note that $S\Gamma(M) = \Gamma(M) \cap \Cl(M)_{0}^{*}$.
\begin{definition}
We define the \textit{Spin group} $\Spin(M)$ to be the group 
\[
\Spin(M) := \{ c \in S\Gamma(M) | c\overline{c} = 1\}.
\]
\end{definition}
Thus, by construction, we have a chain of inclusions $\Spin(M) \subseteq S\Gamma(M) \subseteq \Gamma(M)$.

Later, it will be important for us to understand $\ker(\pi|_{S\Gamma(M)})$ and $\ker(\pi|_{\Spin(M)})$.
\begin{proposition} \label{prop: kerSGamma}
$\ker(\pi: S\Gamma(M) \rightarrow O(M)) = R^{*}$.
\end{proposition}
\begin{proof}
Let $c \in \ker(\pi|_{S\Gamma(M)})$. Then, $c \in \Cl(M)_{0}^{*}$ and $cxc^{-1} = x$ for every $x \in M$. Therefore, as $M$ generates $\Cl(M)$ as an $R$-algebra, we use Lemma \ref{lemma:gradedcenter} to conclude that $c \in Z_{gr}(\Cl(M)) = R$. Similarly, $c^{-1} \in R$, so that $\ker(\pi|_{S\Gamma(M)}) \subseteq R^{*}$. The other inclusion is trivial. 
\end{proof}

\begin{corollary} \label{corollary:kerpi}
$\ker(\pi: \Spin(M) \rightarrow O(M)) \cong \mathbb{Z}_{2}$.
\end{corollary}
\begin{proof}
From Proposition \ref{prop: kerSGamma}, it is clear that 
\[
\ker(\pi: \Spin(M) \rightarrow O(M)) = \{ r \in R^{*} | r^{2} = 1 \}.
\]
Passing to the residue field, we deduce that the square roots of 1 are of the form $r = \varepsilon \pm 1$ for some $\varepsilon$ in the maximal ideal. Using the equation $r^{2} = 1$, we obtain equation $\varepsilon(\varepsilon \pm 2) = 0$. As 2 is a unit, we deduce $\varepsilon \pm 2$ is a unit, so that $\varepsilon = 0$.
\end{proof}
\begin{definition}
We define the \textit{spinorial kernel}
\[
O'(M)
\]
to be the image of the homomorphism $\pi: \Spin(M) \rightarrow O(M)$.
\end{definition}
When $R$ is a local ring with $2 \in R^{*}$, we show that $O'(M)$ is precisely the kernel of the \textit{spinor map} $\theta: SO(M) \rightarrow R^{*}/R^{*2}$, see Definition \ref{def:spinornorm} and Proposition \ref{prop:spinorialkernel}.

In addition, we cite the following theorem, which says that when $R^{2n}$ is a free hyperbolic module over a (semi-)local ring $R$, the spinorial kernel is \textit{precisely} the elementary orthogonal group $EO_{n,n}(R)$ when $n \geq 2$.

\begin{theorem} \label{theorem:imageofspin}
Let $R$ be a commutative semi-local ring. Let $R^{2n}$ be the free hyperbolic module. Denote $O_{n,n}'(R) := O'(R^{2n})$. 
Then, for every $n \geq 2$, $O_{n,n}'(R) = EO_{n,n}(R)$. 
\end{theorem}
\begin{proof}
See \cite[Theorem 9.2.8.]{hahn1989classical}.
\end{proof}
Thus, when $R$ is a (semi-)local ring with $2 \in R^{*}$ and $n \geq 2$, we have the short exact sequences
\[
1 \rightarrow \mathbb{Z}_{2} \rightarrow \Spin_{n,n}(R) \xrightarrow{\pi} EO_{n,n}(R) \rightarrow 1.
\]

\textit{From now on, we will assume that $R$ is a local ring with $2 \in R^{*}$, and all modules over $R$ are finitely generated projective, so that they are free of finite rank.}

\subsubsection{The Spinor Norm}

In order to define the spinor norm, we first need to define an important class of isometries. 

\begin{definition}
Let $x \in M$ be anisotropic and define $N := \langle x \rangle^{\perp}$. Then, the linear map 
\begin{align*}
\tau_{x}&: M \rightarrow M \\
y &\mapsto y - 2\frac{b(x,y)}{b(x,x)}x
\end{align*}
is called a \textit{reflection} in hyperplane $N$ orthogonal to $x$. 
\end{definition}
This name is suggested by the following lemma:
\begin{lemma}
\begin{enumerate}
\item $\tau_{x}(x) = -x$, $\tau_{x}|_{N} = 1_{N}$.
\item $\tau_{x}$ is an isometry of $(M,b)$.
\item $\tau_{x} \circ \tau_{x} = 1_{M}$.
\item $\det \tau_{x} = -1$.
\end{enumerate}
\end{lemma}

\begin{proof}
The first three statements follow from direct computations. For the last statement, note that $M = \langle x \rangle \oplus N$. Therefore, by Witt's Cancellation Theorem \cite[Chapter I, Theorem 4.4]{milnor1973symmetric}, we may choose a basis $e_{2}, \dots, e_{n}$ of $N$ and complete it to a basis of $M$ by setting $e_{1} = x$. The matrix of $\tau_{x}$ with respect to this basis shows that $\det \tau_{x} = -1$.
\end{proof}

\begin{proposition} \label{prop:reflectionconjugation}
For every $x \in M$ anisotropic, we have $\pi(x) = -\tau_{x}$. 
\end{proposition}
\begin{proof}
For every $y \in M$, we have 
\begin{align*}
\tau_{x}(y) &= y -2\frac{b(x,y)}{b(x,x)}x \\
&= y - \frac{q(x+y) - q(x) -q(y)}{q(x)}x \\
&= y - (xy +yx)x^{-1} \\
&= -xyx^{-1} \\
&= -\pi(x)(y).
\end{align*}
\end{proof}

The next proposition will be used to show that our definition of the spinor norm is well-defined. 
\begin{proposition} \label{prop:spinornormwd}
Let $u_{1},\dots, u_{r}$ be anisotropic elements in M. If the product $\tau_{u_{1}}\tau_{u_{2}}\cdots\tau_{u_{r}}$ is the identity in $O(M)$, then the product $q(u_{1}) \cdots q(u_{r})$ belongs to $R^{*2}$. 
\end{proposition}
\begin{proof}
We proceed in a similar way to \cite[Proposition 1.12.V]{lam2005introduction}.
By Proposition \ref{prop:reflectionconjugation}, $\pi(u)|_{M} = -\tau_{u}$, so that $1_{M} = (-1)^{r}\pi(u_{1} \cdots u_{r})|_{M}$. But, 
\[
(-1)^{r} = \det(\tau_{u_{1}}\tau_{u_{2}}\cdots\tau_{u_{r}}) \,\, = \det(1_{M}) = 1. 
\]
Thus, we deduce that $r$ must be even. Therefore, we have that 
\[
c := u_{1}\cdots u_{r} \in \Cl(M)_{0} \cap Z(\Cl(M)) \subset Z_{gr}(\Cl(M)) = R.
\]
Similarly, we have that $c^{-1} \in R$, so that $c \in R^{*}$. 
We conclude that 
\[
R^{*2} \ni c^{2} = c\bar{c} = u_{1} \cdots u_{r} u_{r} \cdots u_{1} = q(u_{1}) \cdots q(u_{r}). 
\]
\end{proof}

Consider any isometry $\sigma \in O(M)$, where the rank of $M$ is at least 2. By the Cartan-Dieudonn\'e Theorem for local rings, see for example \cite[Theorem 2]{klingenberg1961orthogonale}, there exists a factorisation $\sigma = \tau_{u_{1}}\tau_{u_{2}}\cdots\tau_{u_{r}}$, where the $u_{i}$ are anisotropic vectors. We define 
\[
\theta(\sigma) := q(u_{1}) \cdots q(u_{r}) \in R^{*}/R^{*2}.
\]
By Proposition \ref{prop:spinornormwd}, $\theta(\sigma)$ does not depend on the choice of factorisation chosen to represent $\sigma$. 

\begin{definition} \label{def:spinornorm}
The map $\theta: O(M) \rightarrow R^{*}/R^{*2}$ is called the \textit{spinor norm}. 
\end{definition} 

The spinor norm is the unique group homomorphism satisfying the property $\theta(\tau_{u}) = q(u) R^{*2}$ for every anisotropic element $u \in M$.

For $R$ a local ring with $2 \in R^{*}$, we want to establish the existence of short exact sequences
\begin{align*}
&1 \rightarrow EO_{n,n}(R) \rightarrow SO_{n,n}(R) \xrightarrow{\theta} R^{*}/R^{*2} \rightarrow 1 \\
&1 \rightarrow EO_{n,n}(R) \rightarrow O_{n,n}(R) \xrightarrow{\theta \times \det} R^{*}/R^{*2} \times \mathbb{Z}_{2} \rightarrow 1. 
\end{align*}

We begin with the following proposition, which is useful when computing with the Spinor norm. 

\begin{proposition} \label{prop: spinorblocksum}
Let $(M,q_{M})$ and $(N,q_{N})$ be free non-singular quadratic modules of finite rank over $R$. Let $A \in O(M)$ and let $B \in O(N)$, considered as matrices. Let $A\oplus B \in O(M \perp N)$ denote the block sum of matrices $A \oplus B = 
\begin{pmatrix}
A & \\
& B
\end{pmatrix}$. 
Then, $\theta(A \oplus B) = \theta(A)\theta(B)$.  
\end{proposition}
\begin{proof}
Suppose $A \in O(M)$ is represented by $A = \tau_{v_{1}}\cdots \tau_{v_{k}}$ and suppose $B \in O(N)$ is represented by $B = \tau_{w_{1}}\cdots \tau_{w_{l}}$. Then, $A\oplus B \in O(M\perp N)$ is represented by $\tau_{\bar{v}_{1}}\cdots \tau_{\bar{v}_{k}}\tau_{\bar{w}_{1}}\cdots \tau_{\bar{w}_{l}}$, where $\bar{v_{i}}, \bar{w_{j}} \in M \perp N$ are the images of of the vectors $v_{i}$ and $w_{j}$ under the canonical embeddings $M \hookrightarrow M\perp N$ and $N \hookrightarrow M\perp N$ respectively. 

Therefore, 
\begin{align*}
\theta(A\oplus B) &= q_{M\perp N}(\bar{v}_{1})\cdots q_{M\perp N}(\bar{v}_{k})q_{M\perp N}(\bar{w}_{1})\cdots q_{M\perp N}(\bar{w}_{l}) \\
&= q_{M}(v_{1})\cdots q_{M}(v_{1})q_{N}(w_{1})\cdots q_{N}(w_{l}) \\
&= \theta(A)\theta(B).
\end{align*}
\end{proof}
The above proposition is used to prove that the spinor norm $\theta: SO_{n,n}(R) \rightarrow R^{*}/R^{*2}$ is \textit{surjective}. 

\begin{proposition} \label{prop:spinornormsurj}
The spinor norm $\theta: SO_{n,n}(R) \rightarrow R^{*}/R^{*2}$ is surjective. 
\end{proposition}

\begin{proof}
Let $r \in R^{*}$, and consider the matrix $\sigma = 
\begin{pmatrix}
r && \\
& r^{-1} &\\
&& 1
\end{pmatrix}$ . 
Note that $\sigma \in SO_{n,n}(R)$. By Proposition \ref{prop: spinorblocksum}, we have that $\theta(\sigma) = \theta\left(
\begin{pmatrix}
r & 0 \\
0 & r^{-1}
\end{pmatrix}\right)$. As $ 
\begin{pmatrix}
r & 0 \\
0 & r^{-1}
\end{pmatrix} = 
\begin{pmatrix}
0 & r \\
r^{-1} & 0
\end{pmatrix}
\begin{pmatrix}
0 & 1 \\
1 & 0
\end{pmatrix}
$ is a product of reflections defined by vectors $(r, -1)$ and $(1,-1)$, we compute that 
\[
\theta\left(
\begin{pmatrix}
r & 0 \\
0 & r^{-1}
\end{pmatrix}\right) = q(r,-1)q(1,-1) = 4r = r \pmod{R^{*2}}.
\]
\end{proof}

Finally, we want to show that the spinorial kernel $O'(M)$ is precisely the kernel of the spinor map $\theta: SO(M) \rightarrow R^{*}/R^{*2}$. This is done in by the following two propositions.

\begin{proposition} \label{prop: ImaSGamma}
$\Ima(\pi: S\Gamma(M) \rightarrow O(M)) = SO(M)$.
\end{proposition}
\begin{proof}
Firstly, note that $SO(M) \subseteq \Ima(\pi)$. Indeed, if $\tau_{v}\tau_{w}$ are a product of any two reflections, then $\pi(vw) = \tau_{v}\tau_{w}$. 

Suppose that  $SO(M) \subsetneq \Ima(\pi)$. Then, there exists a $\sigma \in O(M) \setminus SO(M)$ such that $\sigma \in \Ima(\pi)$. As $\sigma \in O(M) \setminus SO(M)$, we have that $\sigma = \tau_{v_{1}}\cdots \tau_{v_{k}}$ for $v_{i}$ anisotropic and $k$ odd. Furthermore, as $\sigma \in \Ima(\pi)$, there exists $c \in S\Gamma(M)$ such that $\pi(c) = \sigma$. Note, $\pi(v_{1}\cdots v_{k}) = -\tau_{v_{1}} \cdots \tau_{v_{k}} = -\sigma$. Defining $d := v_{1} \cdots v_{k}$, we deduce $\pi(cd^{-1}) = -1_{M}$. This means that $cd^{-1}x(cd^{-1})^{-1} = -x$ for every $x \in M$. 

Note that $c \in \Cl(M)_{0}$ and $d^{-1} \in \Cl(M)_{1}$. Therefore, $cd^{-1} \in \Cl(M)_{1}$. As $cd^{-1}x(cd^{-1})^{-1} = -x$ for every $x \in M$ and $M$ generates $\Cl(M)$ as an $R$-algebra, we use Lemma \ref{lemma:gradedcenter} to conclude that $cd^{-1} \in Z_{gr}(\Cl(M)) = R$. Thus, $c = dr$ for some $r \in R$, and it therefore follows that $c \in \Cl(M)_{1}$. Thus, $c \in \Cl(M)_{0} \cap \Cl(M)_{1} = 0$, which is a contradiction as $c$ is invertible. 
\end{proof}

\begin{proposition} \label{prop:spinorialkernel}
We have $O'(M) = \ker(\theta: SO(M) \rightarrow R^{*}/R^{*2})$. 
\end{proposition}
\begin{proof}
Let $\sigma \in \ker(\theta|_{SO(M)})$. We want to show $\sigma \in O'_{n,n}(M)$. Suppose $\sigma = \tau_{v_{1}}\cdots \tau_{v_{k}}$. Note that $k$ is even and each $v_{i}$ is anisotropic. 

By definition, $1 = \theta(\sigma) = q(v_{1})\cdots q(v_{k})$. Therefore, $r := q(v_{1})\cdots q(v_{k}) \in R^{*2}$. Suppose that $r = s^{2}$. As $\tau_{v_{1}} = \tau_{s^{-1}v_{1}}$, we may replace $v_{1}$ with $s^{-1}v_{1}$ to obtain $\sigma = \tau_{v_{1}}\cdots \tau_{v_{k}}$ such that $q(v_{1})\cdots q(v_{k}) = 1$. Therefore, in $\Cl(M)$, $v_{1}\cdots v_{k}\overline{v_{1}\cdots v_{k}} = 1$. Define $c := v_{1} \dots v_{k}$. As all $v_{i} \in \Cl(M)^{*}$ and $k$ is even, we deduce $c \in \Spin(M)$. By construction, $\pi(c) = \tau_{v_{1}}\cdots \tau_{v_{k}} = \sigma$, so that $\sigma \in O'(M)$. 

Now let $c \in \Spin(M)$ and consider $\pi(c) \in O'(M)$. We want to show $\pi(c) \in \ker(\theta|_{SO(M)})$. By Proposition  \ref{prop: ImaSGamma}, $\pi(c) \in SO(M)$. Therefore, $\pi(c) = \tau_{v_{1}}\cdots \tau_{v_{k}}$ for $v_{i}$ anisotropic and $k$ even. As $k$ is even, we deduce $c^{-1}v_{1} \cdots v_{k} \in S\Gamma(M)$. Furthermore, by definition, $\pi(c^{-1}v_{1}\cdots v_{k}) = 1$. Therefore, by Proposition \ref{prop: kerSGamma} $c^{-1}v_{1}\dots v_{k} \in \ker(\pi|_{S\Gamma}) = R^{*}$. Thus, $c = rv_{1}\cdots v_{k}$ for some $r \in R^{*}$. As $c \in \Spin(M)$, we obtain $1 = c\overline{c} = r^{2}q(v_{1})\cdots q(v_{k})$, so that $q(v_{1}) \cdots q(v_{k}) \in R^{*2}$. Thus, by definition, $\theta(\pi(c)) = 1$. 
\end{proof}

Thus, for $R$ a local ring with $2 \in R^{*}$, we have established the following theorem:
\begin{theorem} \label{theorem:shortexactsequences}
Let $R$ be a commutative local ring with $2 \in R^{*}$, and let $n \geq 2$. Then, we have short exact sequences
\begin{align*}
&1 \rightarrow \mathbb{Z}_{2} \rightarrow \Spin_{n,n}(R) \xrightarrow{\pi} EO_{n,n}(R) \rightarrow 1 \\
&1 \rightarrow EO_{n,n}(R) \rightarrow SO_{n,n}(R) \xrightarrow{\theta} R^{*}/R^{*2} \rightarrow 1 \\
&1 \rightarrow EO_{n,n}(R) \rightarrow O_{n,n}(R) \xrightarrow{\theta \times \det} R^{*}/R^{*2} \times \mathbb{Z}_{2} \rightarrow 1. 
\end{align*}
\end{theorem}
\begin{proof}
Combine Corollary \ref{corollary:kerpi}; Theorem \ref{theorem:imageofspin}; Proposition \ref{prop:spinorialkernel} and Proposition \ref{prop:spinornormsurj}.
\end{proof}
These short exact sequences are used to prove homological stability for $EO_{n,n}(R)$ and $\Spin_{n,n}(R)$, when $R$ is a local ring with infinite residue field such that $2 \in R^{*}$.

\bibliographystyle{alpha}
\bibliography{Homological_stability}

\end{document}